\numberwithin{equation}{section}
\theoremstyle{definition}
\newtheorem{example}{Example}[section]
\newtheorem{definition}[example]{Definition}
\newtheorem{remark}[example]{Remark}
\theoremstyle{plain}
\newtheorem{lemma}[example]{Lemma}
\newtheorem{theorem}[example]{Theorem} 
\newtheorem{proposition}[example]{Proposition}
\newtheorem{corollary}[example]{Corollary}
\newtheorem{defprop}[example]{Definition-Proposition}
\DeclareMathOperator{\add}{{\rm add}}
\DeclareMathOperator{\proj}{{\rm proj}}
\DeclareMathOperator{\thick}{{\rm thick}}
\DeclareMathOperator{\Hom}{{\rm Hom}}
\DeclareMathOperator{\End}{{\rm End}}
\DeclareMathOperator{\twosilt}{{\rm 2\mathchar`-silt}}
\DeclareMathOperator{\twoips}{{\rm 2\mathchar`-ips}}
\DeclareMathOperator{\twopresilt}{{\rm 2\mathchar`-presilt}}
\DeclareMathOperator{\twoscx}{{\rm 2\mathchar`-scx}}
\DeclareMathOperator{\Kb}{\mathsf{K}^{\rm b}}
\title[Complete gentle and special biserial algebras are $g$-tame]{Complete gentle and special biserial algebras are $g$-tame} 
\author{Toshitaka Aoki}
\address{T. Aoki: Graduate School of Mathematics, Nagoya University, Chikusa-ku, Nagoya, 464-8602 Japan}
\email{m15001d@math.nagoya-u.ac.jp}
\author{Toshiya Yurikusa$^{\ast}$}\thanks{$^{\ast}$Corresponding author email: toshiya.yurikusa.d8@tohoku.ac.jp}
\address{T. Yurikusa: Mathematical Institute, Tohoku University, Aoba-ku, Sendai, 980-8578, Japan}
\email{toshiya.yurikusa.d8@tohoku.ac.jp}
\begin{document}
\keywords{gentle algebras, marked surfaces, Dehn twists, two-term silting complexes, $g$-vectors, special biserial algebras}
\maketitle

\begin{abstract}
The $g$-vectors of two-term presilting complexes are important invariants.
We study a fan consisting of all $g$-vector cones for a complete gentle algebra.
We show that any complete gentle algebra is $g$-tame, by definition, the closure of a geometric realization of its fan is the entire ambient vector space. 
Our main ingredients are their surface model and their asymptotic behavior under Dehn twists.
On the other hand, it is known that any complete special biserial algebra is a factor algebra of a complete gentle algebra and the $g$-tameness is preserved under taking factor algebras.
As a consequence, we get the $g$-tameness of complete special biserial algebras.
\end{abstract}
 
%%%
%%% 
\section{Introduction}
Gentle algebras, introduced in 1980's, form an important class of special biserial algebras and their representation theory has been studied by many authors (e.g. \cite{AH,AS,BR87}).
Moreover, the derived categories of gentle algebras are related to various subjects, such as discrete derived categories \cite{Vossieck01}, numerical derived invariants \cite{AG,APS}, and Fukaya categories of surfaces \cite{HKK,LP}. 

An aim of this paper is to study two-term silting theory for gentle algebras. 
In this paper, we do not assume that gentle algebras are finite dimensional. 
For our purpose, we consider the {\it complete gentle algebras} (see Definition \ref{def QD}). 
They are module-finite over $k[[t]]$ (i.e., finitely generated as an $k[[t]]$-module), 
where $k[[t]]$ is the formal power series ring in one valuable over an algebraically closed field $k$. 
In particular, finite dimensional gentle algebras are complete gentle algebras.

We discuss two-term silting theory over module-finite $k[[t]]$-algebras, see Section \ref{Secrep} (cf. \cite{Kimura}). 
For a module-finite $k[[t]]$-algebra $A$, the homotopy category $\Kb(\proj A)$ of bounded complexes of finitely generated projective right $A$-modules is Krull-Schmidt. We denote by $\twopresilt A$ (resp., $\twosilt A$) the set of isomorphism classes of basic two-term presilting (resp., silting) complexes for $A$. 
Each $T\in \twopresilt A$ has a numerical invariant $g(T)\in\mathbb{Z}^n$, called the $g$-vector of $T$, where $n$ is the number of non-isomorphic indecomposable projective $A$-modules.
Then one can define a cone in $\mathbb{R}^n$, called the $g$-vector cone of $T$, by  
\[
 C(T) := \biggl\{\sum_{X} a_X g(X) \mid a_X \in \mathbb{R}_{\ge0}\biggr\},
\]
where $X$ runs over all indecomposable direct summands of $T$. 
We denote by $\mathcal{F}(A)$ a collection of $g$-vector cones of all basic two-term presilting complexes for $A$, by $\vert \mathcal{F}(A)\vert $ its geometric realization. 
It follows from \cite{Kimura} (see also Proposition \ref{prop fan}) that $\mathcal{F}(A)$ is a non-singular fan (i.e., each maximal cone is generated by a $\mathbb{Z}$-basis for $\mathbb{Z}^n$) and its maximal faces correspond to basic two-term silting complexes for $A$. 
Namely, 
$$ 
\vert\mathbf{\mathcal{F}}(A)\vert  = \bigcup_{C \in \mathcal{F}(A)} C = \bigcup_{T\in \twosilt A} C(T). 
$$
Such a fan plays an important role in the study of stability scattering diagrams and their wall-chamber structures (see e.g. \cite{Asai21,Bridgeland17,BST,Yurikusa18}). 
The following result is well-known.

\begin{theorem} \cite{Asai21,DIJ,ZZ} \label{def:g-finite}   
   Let $A$ be a finite dimensional $k$-algebra. Then the following conditions are equivalent: 
\begin{enumerate}
   \item $\twosilt A$ is finite; 
   \item $\vert \mathcal{F}(A)\vert  =\mathbb{R}^n$. 
\end{enumerate}
\end{theorem}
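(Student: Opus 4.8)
The plan is to treat the two implications separately, with the mutation theory of two-term silting complexes as the main engine. The key input I would isolate as a lemma is the following: every \emph{almost complete} two-term silting complex, i.e.\ a basic two-term presilting complex $U$ with exactly $n-1$ indecomposable summands, admits exactly two completions $T^{+}, T^{-}$ to a basic two-term silting complex, and the two maximal cones $C(T^{+})$ and $C(T^{-})$ meet along the common facet $C(U)$ while lying on \emph{opposite} sides of the hyperplane $\mathrm{span}(C(U))$. Granting this, $\mathcal{F}(A)$ is a simplicial fan whose maximal cones are full-dimensional and in which every codimension-one cone that is not on the outer boundary is a facet of exactly two maximal cones sitting on opposite sides of it. Both directions rest on this two-sidedness.

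For $(1)\Rightarrow(2)$, assume $\twosilt A$ is finite, so $|\mathcal{F}(A)| = \bigcup_{T \in \twosilt A} C(T)$ is a finite union of closed cones and hence closed. Since $A$ itself is silting, $C(A)$ is the positive orthant and already has nonempty interior, so $|\mathcal{F}(A)|$ has nonempty interior; as $\mathbb{R}^n$ is connected it then suffices to show that $|\mathcal{F}(A)|$ is also open. If it were not all of $\mathbb{R}^n$, its topological boundary would be nonempty, and being the boundary of an $n$-dimensional closed polyhedral region it would contain the relative interior of some codimension-one face $\tau = C(U)$ of the fan. But then $\tau$ would be a facet of exactly one maximal cone, contradicting the key lemma, which forces $\tau$ to be shared by two maximal cones on opposite sides of $\mathrm{span}(\tau)$ and hence places its relative interior in the interior of $|\mathcal{F}(A)|$. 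Therefore the boundary is empty and $|\mathcal{F}(A)| = \mathbb{R}^n$.

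For $(2)\Rightarrow(1)$ I would argue the contrapositive. Suppose $\twosilt A$ is infinite; then $\mathcal{F}(A)$ has infinitely many maximal cones and hence infinitely many distinct extremal rays, each spanned by a $g$-vector in $\mathbb{Z}^n$. Normalizing to $S^{n-1}$ and invoking compactness, these rays accumulate at some direction $\rho \in S^{n-1}$. I would then locate an uncovered point: a point slightly past $\rho$, on the far side of the accumulating family, cannot lie in any maximal cone $C(T)$, since such a cone would contain an open neighborhood meeting the pairwise-disjoint interiors of infinitely many of the cones accumulating at $\rho$, a contradiction. Hence $|\mathcal{F}(A)| = \bigcup_{T \in \twosilt A} C(T)$ omits this point, so $|\mathcal{F}(A)| \neq \mathbb{R}^n$.

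The main obstacle is the key lemma, and specifically the sign-coherence assertion that the two silting mutations of an almost complete complex lie on opposite sides of the shared wall; this is exactly where genuine representation-theoretic input (the exchange theorem for two-term silting together with sign-coherence of $g$-vectors) enters, the surrounding arguments being purely topological. A secondary difficulty is making the accumulation argument in $(2)\Rightarrow(1)$ fully rigorous in higher dimensions, which I expect is cleanest when phrased through local finiteness of the fan at points of $|\mathcal{F}(A)|$.
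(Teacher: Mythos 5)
This theorem is not proved in the paper at all: it is quoted as known from \cite{Asai19,DIJ}, so your proposal has to be measured against the arguments in those references and on its own merits.

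Your implication $(1)\Rightarrow(2)$ is correct and is essentially the standard route. The key lemma is genuine and available: an almost complete two-term presilting complex $U$ has exactly two completions $T^{\pm}=U\oplus X^{\pm}$ (Adachi--Iyama--Reiten, Demonet--Iyama--Jasso), the exchange triangle gives $g(X^{+})+g(X^{-})\in\mathrm{cone}\{g(U_{1}),\dots,g(U_{n-1})\}$, and since the $g$-vectors of the indecomposable summands of $T^{+}$ form a basis of $\mathbb{Z}^{n}$, the vector $g(X^{-})$ lies strictly on the opposite side of $\mathrm{span}\,C(U)$. Granting the standard polyhedral facts you invoke (maximal cones are full-dimensional; a nonempty boundary of a finite, pure $n$-dimensional polyhedral set must meet the relative interior of some codimension-one cone, since the complement of a polyhedral set of codimension at least $2$ is connected), the clopen argument closes.

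The implication $(2)\Rightarrow(1)$ is where there is a genuine gap, and it already occurs for $n=2$, not merely ``in higher dimensions''. Your argument uses only the convex geometry of the fan (simplicial, rational rays, accumulation on the sphere), and no such argument can succeed, because the corresponding geometric statement is false. Consider the fan in $\mathbb{R}^{2}$ whose maximal cones are $P=\mathrm{cone}\{(1,0),(0,1)\}$, $C_{k}=\mathrm{cone}\{(-1,k),(-1,k+1)\}$ for all $k\geq 0$, $N=\mathrm{cone}\{(-1,0),(0,-1)\}$ and $Q=\mathrm{cone}\{(0,-1),(1,0)\}$. This is a simplicial fan with infinitely many maximal cones, each spanned by a $\mathbb{Z}$-basis of $\mathbb{Z}^{2}$, containing both orthants, and it is \emph{complete}: the $C_{k}$ cover every direction strictly between $(0,1)$ and $(-1,0)$, while $P$, $N$, $Q$ cover the rest. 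Here the rays through $(-1,k)$ accumulate at $\rho=(0,1)$, exactly your situation, yet every point of $\mathbb{R}^{2}$ is covered: a point $(\varepsilon,1)$ ``slightly past $\rho$ on the far side'' lies in $P$, and a small enough neighbourhood of it stays inside the half-plane $x>0$ and meets none of the $C_{k}$, so the contradiction you predict never materializes. The flaw is the implicit assumption that a maximal cone covering points arbitrarily close to $\rho$ on the far side must contain $\rho$ in its interior, or must reach across $\rho$ toward the accumulating family; instead it can have $\rho$ exactly on its boundary and lie entirely on the far side.

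Consequently this direction cannot be ``purely topological'': some representation-theoretic input beyond the fan axioms is indispensable, and your sketch supplies none here (the key lemma is not even invoked in this direction; note that the fan above satisfies every property your proof has at its disposal except two-sidedness at the single ray $(0,1)$, so even inserting the lemma does not obviously repair the argument). This is precisely where \cite{Asai19,DIJ} do real work: there, $\tau$-tilting-infiniteness is converted into an uncovered point via non-functorially-finite torsion classes, infinite families of bricks, and the semistable wall-and-chamber structure on $\mathbb{R}^{n}$, not via compactness of the sphere. Relatedly, your closing remark that the remaining difficulty is a ``secondary'' issue of local finiteness understates the problem: $\mathcal{F}(A)$ is typically \emph{not} locally finite when $\twosilt A$ is infinite, and locating a point of failure of local finiteness, and showing it cannot lie in $|\mathcal{F}(A)|$, is essentially equivalent to the theorem itself.
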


This result naturally leads the following definition in a general setting. 

\begin{definition} \label{def:g-tame}
Let $A$ be a module-finite $k[[t]]$-algebra. We say that $A$ is {\it $g$-tame} if it satisfies 
\begin{equation*}
   \overline{\vert \mathcal{F}(A)\vert } =\mathbb{R}^n,
\end{equation*} 
where $\overline{(-)}$ is the closure with respect to the natural topology on $\mathbb{R}^n$.
\end{definition}

This means that $g$-vector cones are dense in the stability scattering diagram \cite{Bridgeland17} for a finite dimensional $g$-tame algebra. Note that a similar notion, called $\tau$-tilting tame, was given in \cite{BST}.

The $g$-tameness is known for path algebras of extended Dynkin quivers \cite{Hille06}, for complete preprojective algebras of extended Dynkin graphs \cite{KM}, and for Jacobian algebras associated with triangulated surfaces \cite{Yurikusa20}. We prove the $g$-tameness of a new class.

\begin{theorem}\label{comp SB gtame}
Any complete special biserial algebra is $g$-tame. 
\end{theorem}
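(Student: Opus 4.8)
The plan is to reduce the claim for complete special biserial algebras to the corresponding statement for complete gentle algebras, which is the main technical result promised in the abstract. The abstract explicitly states two facts: first, that every complete special biserial algebra is a factor algebra of a complete gentle algebra; and second, that $g$-tameness is preserved under taking factor algebras. So my strategy is to prove the gentle case directly and then invoke these two reduction facts.

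\begin{proof}[Proof proposal]
The strategy is to deduce the statement from the gentle case together with two structural reductions. First I would recall (or establish) that any complete special biserial algebra $A$ arises as a factor algebra $A = B/I$ of some complete gentle algebra $B$; this is a classical observation for finite dimensional special biserial algebras and I expect it to carry over to the complete setting by the same bound-quiver manipulation (splitting biserial relations at each vertex to obtain a gentle presentation, then quotienting). Second I would invoke the preservation of $g$-tameness under factor algebras: if $B \twoheadrightarrow A$ and $B$ is $g$-tame, then $A$ is $g$-tame. The key point here is that a surjection $B \to A$ induces a well-behaved map $\mathsf{K}^{\rm b}(\proj A) \to \mathsf{K}^{\rm b}(\proj B)$ on two-term presilting objects whose effect on $g$-vectors is a linear isomorphism $\mathbb{R}^n \to \mathbb{R}^n$ (the identity after identifying the simple modules, since $A$ and $B$ share the same vertex set), and the image of $|\mathcal{F}(A)|$ sits inside $|\mathcal{F}(B)|$ in a density-preserving way; hence $\overline{|\mathcal{F}(B)|} = \mathbb{R}^n$ forces $\overline{|\mathcal{F}(A)|} = \mathbb{R}^n$.

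Granting these two reductions, everything reduces to proving that a complete gentle algebra $B$ is $g$-tame, i.e.\ $\overline{|\mathcal{F}(B)|} = \mathbb{R}^n$. For this I would set up the surface model of $B$, realizing indecomposable two-term presilting complexes (equivalently their $g$-vectors) in terms of arcs or laminates on the associated marked surface, so that $g$-vector cones correspond to compatible collections of arcs. The goal then becomes showing that the rational rays spanned by $g$-vectors of presilting complexes are dense in $\mathbb{R}^n$. The natural mechanism is the \emph{asymptotic behavior under Dehn twists} flagged in the abstract: repeatedly applying a Dehn twist along a suitable curve produces a sequence of arcs whose $g$-vectors, after normalization, converge to the direction determined by that curve. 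By choosing Dehn twists along a generating family of curves one sweeps out a dense set of directions.

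Concretely I would proceed as follows. Fix an arbitrary target direction $v \in \mathbb{R}^n$; I want to exhibit $g$-vectors of presilting complexes accumulating at the ray $\mathbb{R}_{\ge 0} v$. I would express $v$ (or approximate it) as a limit of rational combinations coming from homology/intersection data on the surface, then realize each such rational direction by iterating Dehn twists on a fixed arc. The main estimate is to control how the $g$-vector grows under the $m$-th power of a Dehn twist and to show that after rescaling by the growth rate the vectors converge to the eigendirection associated with the twist, matching $v$ up to the chosen approximation.

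The hard part will be precisely this asymptotic analysis: establishing that iterated Dehn twists yield presilting (not merely rigid) complexes, controlling the $g$-vectors uniformly, and proving convergence to the prescribed direction with enough flexibility that the accumulated directions are dense in the whole space rather than merely in a subspace. Subtleties I anticipate are handling the $k[[t]]$-completion (which introduces the loop/genus data absent in the finite dimensional case and is exactly what makes the algebra only $g$-tame rather than $g$-finite), and verifying that the surface model correctly computes $g$-vectors in the module-finite setting where $\mathsf{K}^{\rm b}(\proj B)$ is Krull--Schmidt but $B$ is infinite dimensional. The two reduction steps, by contrast, I expect to be comparatively routine once the gentle case is in hand.
\end{proof}
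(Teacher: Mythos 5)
Your overall architecture coincides with the paper's: reduce to complete gentle algebras using that every complete special biserial algebra is a factor of a complete gentle one (Proposition \ref{SB-gentle}) and that $g$-tameness passes to factor algebras (Proposition \ref{factor-twosilt}(2)), then prove the gentle case via the surface model (Theorem \ref{D-twosilt}) and Dehn-twist asymptotics (Theorem \ref{dense}). However, your justification of the second reduction is backwards. A surjection $B\twoheadrightarrow A$ induces $-\otimes_B A\colon \mathsf{K}^{\rm b}(\proj B)\to \mathsf{K}^{\rm b}(\proj A)$, i.e., it goes \emph{from} the gentle algebra \emph{to} its factor; it sends two-term presilting complexes over $B$ to two-term presilting complexes over $A$ with the same $g$-vector, giving $|\mathcal{F}(B)|\subseteq|\mathcal{F}(A)|$, and density then passes from $B$ to $A$. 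Your inclusion, that ``the image of $|\mathcal{F}(A)|$ sits inside $|\mathcal{F}(B)|$'', is the wrong way around and yields nothing: a subset of a dense set need not be dense. This slip is repairable (and one can simply cite \cite{ADI}, as the paper does), but as written that step fails.

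The genuine gap is in the gentle case, exactly at the point you yourself flag as ``the hard part''. Knowing that powers of a Dehn twist produce $g$-vectors converging, after normalization, to the direction of the twisting curve does not by itself show that the reachable directions are dense in $\mathbb{R}^n$; one must know \emph{which} directions can be reached, and by \emph{which} presilting configurations. The paper's proof has two ingredients your sketch lacks. First, Theorem \ref{lattice points}: $D$-laminations --- finite multisets of pairwise compatible $D$-laminates, with closed curves allowed --- biject with $\mathbb{Z}^{|D|}$ via $\mathcal{X}\mapsto g(\mathcal{X})$. This realizes every lattice point, hence every rational direction (since $\overline{|\mathcal{F}(D)|}$ is closed under positive scaling), as the $g$-vector of a concrete geometric object; it is proved by a local-to-global gluing over the polygons of $D$, not by homological considerations. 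Second, for such a lamination $\mathcal{X}=\mathcal{X}^{\rm nc}\sqcup\mathcal{X}^{\rm cl}$ one must trade the closed components for objects actually lying in the fan: the paper constructs auxiliary non-closed laminates $\ell_i^{d_i}$ compatible with $\mathcal{X}^{\rm nc}$ and in positive position for every closed component (Section \ref{construction}, Lemma \ref{pp for elld}), which is precisely what guarantees that all nonnegative powers of the Dehn twists again yield non-closed $D$-laminates compatible with $\mathcal{X}^{\rm nc}$ (Proposition \ref{Dehn twists at l_1,..., l_k}), and that with the weighted exponents $N/n_j$ the rescaled $g$-vectors converge exactly to $Ng(\mathcal{X}^{\rm cl})$, whence $g(\mathcal{X})\in\overline{\bigcup_{m}C(\mathcal{X}_m)}$. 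Your plan --- approximate $v$ by ``rational combinations coming from homology/intersection data'' and twist ``a fixed arc'' --- contains neither the surjectivity statement nor the compatibility/positive-position mechanism, so the central claim that the accumulated directions fill all of $\mathbb{R}^n$ rather than a proper subcone is left unproven.
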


To prove Theorem 1.3, it suffices to prove that complete gentle algebras are $g$-tame. In fact, any complete special biserial algebra is a factor algebra of a complete gentle algebra (Proposition \ref{SB-gentle}), and $g$-tameness is preserved under taking factor algebras (Corollary \ref{factor-g-tame}). 

To prove the $g$-tameness of complete gentle algebras, their surface model plays a central role (see \cite{APS,OPS,PPP}). A similar construction has been developed in several areas, such as \cite{AAC,KS,OPS}. For each dissection $D$ of a $\circ\bullet$-marked surface $(S,M)$, one can define a complete gentle algebra $A(D)$. 
Conversely, any complete gentle algebra arises in this way (see Sections \ref{secms} and \ref{secgentle} for the details). 
Note that the cardinality $n$ of $D$ is completely determined by $(S,M)$ (Remark \ref{invariant}). 

For a given dissection $D$ of $(S,M)$, we observe a certain class of non-self-intersecting curves of $S$, called $D$-laminates, and finite multi-sets of pairwise non-intersecting $D$-laminates, called $D$-laminations. 
Notice that we take account of closed curves here. 
To each $D$-laminate $\gamma$, we associate an integer vector $g(\gamma) \in \mathbb{Z}^{n}$, called the $g$-vector, whose entries are intersection numbers of $\gamma$ and $d\in D$. 
Our $g$-vector is an analog of shear coordinates on triangulated surfaces
\cite{FT}. From this point of view, we give the next result as an analog
of \cite[Theorems 12.3, 13.6]{FT}. It is also a generalization of
\cite[Proposition 6.14]{PPP} to an arbitrary dissection.

\begin{theorem}(Theorem \ref{lattice points}) \label{introbij} 
The map $\mathcal{X}\mapsto \sum_{\gamma\in \mathcal{X}}g(\gamma)$ gives a bijection between the set of $D$-laminations and $\mathbb{Z}^n$. 
\end{theorem}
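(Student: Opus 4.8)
The plan is to establish the bijection by exhibiting an explicit inverse map, constructing a canonical $D$-lamination from any target vector $v \in \mathbb{Z}^n$, and verifying that the two constructions are mutually inverse. First I would analyze the structure of the $g$-vector map $g(\gamma)$ on individual $D$-laminates: since each entry of $g(\gamma)$ is an intersection number of $\gamma$ with some $d \in D$, the dissection $D$ cuts the surface $(S,M)$ into elementary pieces (polygons), and any $D$-laminate is determined up to isotopy by how it crosses the arcs of $D$. I expect that inside each polygon of the dissection the intersection data is rigidly constrained, so that a $D$-laminate is recovered combinatorially from its sequence of crossings. The additive structure of the map, $\mathcal{X} \mapsto \sum_{\gamma \in \mathcal{X}} g(\gamma)$, together with the non-intersection condition imposed on laminations, should mean that the intersection numbers satisfy local compatibility relations across each arc of $D$.

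Next I would build the inverse. Given $v \in \mathbb{Z}^n$, I want to read the coordinates as prescribed total crossing numbers with each $d \in D$ and reconstruct a unique collection of pairwise non-intersecting $D$-laminates realizing exactly these numbers. The natural strategy, following the shear-coordinate philosophy of \cite[Theorems 12.3, 13.6]{FT}, is to work polygon by polygon: within each elementary piece of the dissection, the prescribed boundary crossing numbers determine the arcs of the laminates passing through it essentially uniquely (a local matching problem that is forced once we fix which boundary segments the strands connect). I would then glue these local pieces consistently along the arcs of $D$, using the non-intersection requirement to forbid inconsistent gluings. Allowing negative entries in $v$ is handled because $g$-vectors of $D$-laminates themselves can have entries of either sign depending on the side from which $\gamma$ meets $d$, so the signs encode orientation/side data rather than mere counts; I would make this precise by recording signed intersection numbers.

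The verification then splits into injectivity and surjectivity. For injectivity I would argue that two $D$-laminations with the same signed intersection vector must agree locally in every polygon of the dissection and hence coincide up to isotopy; this rests on the rigidity of the local reconstruction. For surjectivity, the explicit construction above produces a valid $D$-lamination for every $v \in \mathbb{Z}^n$, provided the local matchings always glue. Here I would lean on the special role of closed curves, which the statement explicitly includes: closed $D$-laminates are precisely what is needed to realize vectors that cannot be achieved by arcs with endpoints on the boundary, so their inclusion is what makes the map surjective onto all of $\mathbb{Z}^n$ rather than a proper sublattice.

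The hard part will be the local-to-global gluing argument: showing that the per-polygon reconstructions are not only individually forced but always assemble into a \emph{globally} consistent, pairwise non-intersecting lamination, with no obstruction to closing up strands into the required closed curves. In the triangulated-surface setting of \cite{FT} this is handled by the tropical/shear-coordinate formalism, but for an \emph{arbitrary} dissection $D$ (generalizing \cite[Proposition 6.14]{PPP}) the elementary pieces need not be triangles, so I expect the main technical work to lie in setting up the correct local model for a general polygon and checking that the matching-and-gluing procedure is well-defined and bijective on the nose. Once this local analysis is in place, both directions of the bijection and their mutual inverseness should follow by tracking the signed crossing data through the construction.
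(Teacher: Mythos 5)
Your plan has the same architecture as the paper's proof (decompose the surface into the polygons $\triangle_v$ of the dissection, solve a local reconstruction problem in each polygon, then glue along the arcs of $D$), but it defers precisely the step that constitutes the entire mathematical content, and it offers no mechanism for it. You write that inside each polygon the matching is ``forced once we fix which boundary segments the strands connect'' --- but determining which segments the strands connect, with which signs, and with which multiplicities, \emph{is} the local problem; asserting it is forced begs the question. The paper resolves it with a concrete trick you do not have: glue a digon carrying one $\circ$-point onto each $\bullet$-arc of $\triangle_v$, so that the polygon together with its local $g$-vector data becomes an instance of one of two explicit global models --- a $\circ\bullet$-marked disk (case (a)) or a once-punctured disk (case (b)) with the specific dissections $D_1, D_2$ of Figure \ref{Fig fin cases}. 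For those two dissections one counts that there are only finitely many laminates (Proposition \ref{Di fan}), so by the quoted result of \cite{PPP} (Theorem \ref{comp simp fan}) all $D_i$-laminates are non-closed and the simplicial fan $\mathcal{F}(D_i)$ covers all of $\mathbb{R}^{|D_i|}$. Then the lattice-point statement falls out formally: any $g \in \mathbb{Z}^{|D_i|}$ lies in the interior of a unique cone $C(\mathcal{X}')$, simpliciality gives a unique expression $g = \sum_{\gamma \in \mathcal{X}'} a_\gamma g(\gamma)$ with $a_\gamma \in \mathbb{Z}_{>0}$, and the multi-set taking each $\gamma$ with multiplicity $a_\gamma$ is the unique local lamination (Corollary \ref{for stars}). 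Note this is also where the multi-set structure of laminations enters, which your sketch never addresses.

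Without this reduction (or a genuine substitute for it), neither your surjectivity nor your injectivity argument gets off the ground, since both rest on the unproven ``rigidity of the local reconstruction.'' Two smaller inaccuracies: first, a hands-on matching argument in a general polygon would also have to handle spirals around punctures and the sign condition $(\ast)$, which is exactly what the case (b) model packages for you; second, your claim that closed laminates are ``precisely what is needed'' to avoid landing in a proper sublattice misdescribes their role --- in the paper's construction the local models contain no closed laminates at all, and closed curves simply appear (or not) when the local strands are glued across polygons, as in the torus example where $g(\ell)=(1,-1)$ is realized only by a closed laminate. So the inclusion of closed curves is an output of the gluing, not an input one must engineer for surjectivity.
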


We especially consider certain $D$-laminations.
A $D$-lamination $\mathcal{X}$ is said to be reduced if it consists of pairwise distinct non-closed $D$-laminates up to isotopy, and complete if it is reduced and maximal as a set. 
We denote by $\mathcal{F}(D)$ a collection of $C(\mathcal{X})$ of all reduced $D$-laminations $\mathcal{X}$, where $C(\mathcal{X})$ is a cone in $\mathbb{R}^n$ spanned by $g(\gamma)$ for all $\gamma \in \mathcal{X}$.
%\com{In particular, $\mathcal{F}(D)$ is a non-singular fan whose maximal faces correspond to complete $D$-laminations (Proposition \ref{g base}).} 
We prove that 
%\com{the fan} 
\begin{equation*}
    \vert \mathcal{F}(D)\vert  := \bigcup_{C\in \mathcal{F}(D)} C
\end{equation*}
%\com{$\mathcal{F}(D)$} 
is dense in $\mathbb{R}^n$. Namely,

\begin{theorem} \label{dense}
   For a dissection $D$ of a $\circ\bullet$-marked surface $(S,M)$, we have 
   \[  
   \overline{\vert \mathcal{F}(D)\vert } 
   = \mathbb{R}^{n}.
   \]
\end{theorem}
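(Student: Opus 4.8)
The plan is to prove the nontrivial inclusion $\mathbb{R}^n \subseteq \overline{|\mathcal{F}(D)|}$ by approximation, and the first move is to reduce it to a statement about lattice points. Each cone $C(\mathcal{X})$ is stable under multiplication by $\mathbb{R}_{\ge 0}$, so $|\mathcal{F}(D)|$, being a union of cones, is itself $\mathbb{R}_{\ge 0}$-stable, and hence so is its closure; consequently $\mathbf{v} \in \overline{|\mathcal{F}(D)|}$ forces the whole ray $\mathbb{R}_{\ge 0}\mathbf{v}$ into $\overline{|\mathcal{F}(D)|}$. Since every point of $\mathbb{Q}^n$ is a non-negative multiple of an integer point and $\mathbb{Q}^n$ is dense in $\mathbb{R}^n$, it suffices to prove $\mathbb{Z}^n \subseteq \overline{|\mathcal{F}(D)|}$.

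Fix $\mathbf{v} \in \mathbb{Z}^n$. By Theorem \ref{introbij} there is a unique $D$-lamination $\mathcal{X}$ with $\mathbf{v} = \sum_{\gamma \in \mathcal{X}} g(\gamma)$. Grouping by isotopy class, I would write $\mathbf{v} = \sum_{i} a_i\, g(\gamma_i) + \sum_{j} b_j\, g(\delta_j)$, where $\gamma_1,\dots,\gamma_p$ are the distinct non-closed and $\delta_1,\dots,\delta_q$ the distinct closed $D$-laminates occurring in $\mathcal{X}$, with multiplicities $a_i, b_j \in \mathbb{Z}_{>0}$; all of these curves are pairwise non-intersecting. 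The distinct non-closed curves $\gamma_1,\dots,\gamma_p$ already form a reduced $D$-lamination, so $\sum_i a_i g(\gamma_i) \in C(\{\gamma_1,\dots,\gamma_p\}) \subseteq |\mathcal{F}(D)|$; the entire difficulty lies in absorbing the closed curves.

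Here I would invoke the asymptotic behaviour of $g$-vectors under Dehn twists, the computational core supplied by the surface model: for a closed $D$-laminate $\delta$ and a non-closed $D$-laminate $\eta$ with intersection number $i(\eta,\delta) = m \ge 1$, the twisted curves $T_\delta^k \eta$ are again non-closed $D$-laminates and satisfy
\[
   \frac{1}{k m}\, g\bigl(T_\delta^k \eta\bigr) \xrightarrow[k\to\infty]{} g(\delta).
\]
Granting this, I would choose for each $j$ an arc $\eta_j$ with $i(\eta_j,\delta_j) = m_j \ge 1$ so that, for all large $k$, the family
\[
   \mathcal{Y}_k := \{\gamma_1,\dots,\gamma_p\} \cup \bigl\{\, T_{\delta_1}^k \eta_1,\dots, T_{\delta_q}^k \eta_q \,\bigr\}
\]
is a reduced $D$-lamination. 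Then
\[
   \mathbf{w}_k := \sum_{i} a_i\, g(\gamma_i) + \sum_{j} \frac{b_j}{k m_j}\, g\bigl(T_{\delta_j}^k \eta_j\bigr)
\]
is a non-negative combination of the $g$-vectors of the members of $\mathcal{Y}_k$, so $\mathbf{w}_k \in C(\mathcal{Y}_k) \subseteq |\mathcal{F}(D)|$; letting $k \to \infty$ and using the displayed limit gives $\mathbf{w}_k \to \sum_i a_i g(\gamma_i) + \sum_j b_j g(\delta_j) = \mathbf{v}$, whence $\mathbf{v} \in \overline{|\mathcal{F}(D)|}$.

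The main obstacle is exactly the existence of the arcs $\eta_j$ making $\mathcal{Y}_k$ reduced: one must produce, simultaneously for all closed curves, arcs crossing their respective $\delta_j$ that are nonetheless pairwise non-crossing and non-crossing with the fixed $\gamma_i$. I would settle this by surface topology, taking pairwise disjoint annular neighbourhoods of the $\delta_j$ that avoid the $\gamma_i$, letting $\eta_j$ cross $\delta_j$ inside its annulus, and routing the two tails of $\eta_j$ out to marked points within the complementary regions, which is possible because every region cut out by a dissection carries marked points. Since $T_{\delta_j}^k\eta_j$ differs from $\eta_j$ only by spiralling inside the annulus, all transverse intersections are governed by these fixed tails, which can be made pairwise disjoint by a genericity argument. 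Establishing the Dehn-twist asymptotic from the surface model, and carrying out this routing carefully in the $\circ\bullet$-marked setting, is where the real work will lie.
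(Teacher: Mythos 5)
Your overall strategy --- reduce to lattice points via Theorem \ref{lattice points}, keep the non-closed part of the lamination, and absorb each closed laminate by approximating its $g$-vector by $g$-vectors of Dehn-twisted non-closed laminates --- is exactly the strategy of the paper. But the step you yourself flag as ``where the real work will lie'' is not merely work; as you have set it up, it fails, in two ways. First, your twist asymptotic is asserted for an arbitrary non-closed $D$-laminate $\eta$ crossing $\delta$, and that is false: unless $\eta$ is in \emph{positive position} for $\delta$, the curve $\mathsf{T}_{\delta}^k\eta$ need not be a $D$-laminate at all (the paper warns of precisely this before Lemma \ref{Dehn twist at l}), and the identity $g(\mathsf{T}_{\delta}(\eta))=g(\eta)+\#(\eta\cap\delta)\,g(\delta)$ also requires that hypothesis. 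Second, and more seriously, your scheme demands a \emph{dedicated} arc $\eta_j$ for each closed curve $\delta_j$, meeting $\delta_j$ and nothing else in the configuration. This is topologically impossible in general: the ends of a non-closed laminate must be unmarked boundary points or spirals around punctures in $M_{\circ}$, and a closed curve can be separated from \emph{all} such admissible locations by the other curves of $\mathcal{X}$ (think of a higher-genus surface whose punctures all lie on one side of a separating closed laminate $\delta_2$, with another closed laminate $\delta_1$ on the far side: every arc meeting $\delta_1$ must then cross $\delta_2$). Your justification --- ``every region cut out by a dissection carries marked points'' --- conflates the polygons of $D$, which the tails of $\eta_j$ may freely cross, with the complementary regions of the lamination, which they may not; it is the latter that can lack admissible endpoints. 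Nor can you repair this by letting $\eta_j$ cross other $\delta_{j'}$: since you twist each $\eta_j$ by a \emph{different} homeomorphism $\mathsf{T}_{\delta_j}^k$, pairwise disjointness of the twisted family would then be lost.

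The paper's proof is engineered around exactly these two obstructions. The auxiliary arcs $\ell_i^{d_i}$ of Section \ref{construction} are built by splicing the dual arc $d^{\ast}_+$ with segments that run parallel to the existing non-closed laminates and exit along their ends; this forces compatibility with $\mathcal{X}^{\rm nc}$ and positive position for \emph{every} closed laminate simultaneously (Lemma \ref{pp for elld}). They are produced iteratively until every closed curve meets at least one of them, but a given arc is allowed to cross several closed curves. Then a \emph{single} composite twist $\mathsf{T}=\mathsf{T}^{(N/n_1,\dots,N/n_k)}_{(\ell_1,\dots,\ell_k)}$ is applied to the whole family, so disjointness of the twisted arcs is automatic (one homeomorphism applied to disjoint curves), and the exponents $N/n_j$ are tuned so that $\sum_i g(\mathsf{T}^m(\ell_i^{d_i}))$ grows in the direction $N\,g(\mathcal{X}^{\rm cl})$ even though the incidence between arcs and closed curves is uneven; this replaces your per-curve weights $b_j/(km_j)$. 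To salvage your write-up you would need to replace the annulus-routing argument by this construction (or prove an existence statement of comparable strength); the remainder of your argument then goes through.
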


On the other hand, we show in Section \ref{Secrep} that the surface model realizes a fan of $g$-vector cones for a complete gentle algebra $A(D)$ of $D$. It completes a proof of Theorem \ref{comp SB gtame}. 

\begin{theorem}[Theorem \ref{D-twosilt}]\label{introtwosilt}
   Let $D$ be a dissection of a $\circ\bullet$-marked surface $(S,M)$ and $A(D)$ the complete gentle algebra associated with $D$. Then there are bijections 
   \[
   T_{(-)}\colon \{\text{reduced $D$-laminations}\} \to \twopresilt A(D), \ \text{and}
   \]
   \[
   \{\text{complete $D$-laminations}\} \to \twosilt A(D)
   \] 
   such that $C(\mathcal{X}) = C(T_{\mathcal{X}})$. 
   In particular, we have $\mathcal{F}(A(D))=\mathcal{F}(D)$. 
\end{theorem}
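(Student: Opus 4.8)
The plan is to establish the bijection $T_{(-)}$ at the level of indecomposable objects first, then promote it to the full correspondence on laminations and silting complexes, and finally verify compatibility of $g$-vectors. The key structural input is the surface model of the complete gentle algebra $A(D)$: indecomposable objects of $\mathsf{K}^{\rm b}(\proj A(D))$ arising as two-term presilting complexes should correspond to curves on $(S,M)$. Concretely, for each non-closed $D$-laminate $\gamma$ I would build a two-term complex $T_\gamma$ of projectives whose homotopy class is read off from the intersection pattern of $\gamma$ with the dissection $D$; the arcs of $D$ index the indecomposable projectives, and the crossings of $\gamma$ with $d \in D$ dictate the terms and the differential. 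The first step is therefore to make this assignment $\gamma \mapsto T_\gamma$ precise and to check that $T_\gamma$ is indeed presilting, i.e. $\Hom_{\mathsf{K}^{\rm b}(\proj A(D))}(T_\gamma, T_\gamma[1]) = 0$.

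\emph{Key steps.} First I would treat a single $D$-laminate and verify that $T_\gamma$ is a well-defined indecomposable two-term presilting complex, computing its $g$-vector from the intersection numbers and matching it to $g(\gamma)$; this is where the definition of the $g$-vector as intersection data pays off, since by construction the $i$-th entry of $g(T_\gamma)$ counts signed crossings of $\gamma$ with $d_i \in D$, which is exactly $g(\gamma)_i$. Second, I would show that two $D$-laminates $\gamma, \gamma'$ are non-intersecting if and only if $T_\gamma \oplus T_{\gamma'}$ is again presilting, i.e. $\Hom(T_\gamma, T_{\gamma'}[1]) = 0 = \Hom(T_{\gamma'}, T_\gamma[1])$; this geometric-to-homological dictionary upgrades the assignment on single laminates to a well-defined map on reduced $D$-laminations, sending $\mathcal{X}$ to $T_{\mathcal{X}} := \bigoplus_{\gamma \in \mathcal{X}} T_\gamma$. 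Third, I would prove surjectivity and injectivity: every indecomposable two-term presilting complex is isomorphic to some $T_\gamma$ (surjectivity on indecomposables), and distinct non-closed laminates give non-isomorphic complexes (injectivity), so that $T_{(-)}$ is a bijection between reduced $D$-laminations and $\twopresilt A(D)$.

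For the silting statement I would invoke that maximality is detected on both sides compatibly: complete $D$-laminations are exactly the maximal reduced ones (as recorded in Proposition \ref{g base}), while two-term silting complexes are the maximal presilting ones, and a presilting complex is silting precisely when it has $n$ indecomposable summands. Since a complete $D$-lamination has exactly $n$ laminates, $T_{\mathcal{X}}$ then has $n$ indecomposable summands and is therefore silting; conversely any two-term silting complex, being maximal presilting, pulls back under the bijection to a maximal reduced lamination, hence a complete one. The equality $C(\mathcal{X}) = C(T_{\mathcal{X}})$ is then immediate from Step one together with the definition of both cones as the nonnegative span of the respective $g$-vectors, since $g(\gamma) = g(T_\gamma)$ for each summand; and $\mathcal{F}(A(D)) = \mathcal{F}(D)$ follows by ranging over all reduced laminations.

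\emph{Main obstacle.} The crux is the geometric-homological dictionary in the second and third steps: translating the non-intersection of curves into the vanishing of $\Hom$-spaces in $\mathsf{K}^{\rm b}(\proj A(D))$, and conversely showing every indecomposable two-term presilting complex comes from a laminate. This requires an explicit and faithful computation of morphisms between the string-type complexes $T_\gamma$ in terms of the combinatorics of the dissection, and care must be taken because $A(D)$ is generally infinite dimensional (only module-finite over $k[[t]]$), so the homotopy-category $\Hom$-computations and the Krull--Schmidt property must be handled in that completed setting rather than by naive finite-dimensional string-module arguments. Once this dictionary is in place, the bijections and the $g$-vector compatibility are formal consequences of the intersection-number description of $g$-vectors and the maximality characterizations already available.
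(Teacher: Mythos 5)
Your proposal is correct and follows essentially the same route as the paper: construct $T_\gamma$ from the intersection pattern of $\gamma$ with $D$, translate compatibility of laminates into vanishing of $\Hom_{\mathsf{K}^{\rm b}(\proj A(D))}(T_\gamma,T_{\gamma'}[1])$, establish the bijection on indecomposables, and deduce the silting statement by counting (a reduced $D$-lamination is complete iff it has $|D|$ elements, and a two-term presilting complex is silting iff its number of summands equals the number of simples). The two obstacles you flag are precisely where the paper invests its technical work: the $\Hom$-vanishing dictionary is obtained from the Arnesen--Laking--Pauksztello classification of morphisms between string complexes (singleton single maps and quasi-graph map representatives), yielding the positive-position criterion, while surjectivity of $\gamma\mapsto T_\gamma$ onto indecomposable presilting complexes is proved not in the completed setting directly but by reducing modulo the central element $t$ to the finite-dimensional special biserial algebra $A(D)/(t)$, where indecomposable $\tau$-rigid modules are string modules (band modules $M$ satisfy $\tau M=M$, hence are excluded), and then lifting back via Kimura's bijection for quotients by central ideals in the radical.
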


It is shown via representation theory that $\mathcal{F}(A(D))=\mathcal{F}(D)$ is a (not necessarily finite) non-singular fan (Corollary \ref{FD is non-singular}).

A main ingredient of our proof of Theorem \ref{dense} is the asymptotic behavior of $g$-vectors under Dehn twists. This proof is inspired by the proof of \cite[Theorem 1.5]{Yurikusa20}. Recently, Plamondon and the second author \cite{PY} showed that finite
dimensional tame algebras are $g$-tame. Although finite dimensional special biserial algebras are tame, our result includes infinite dimensional case. Moreover, since our method is based on a geometric model of complete gentle algebras $A(D)$, it gives much more information about $g$-vectors, e.g.\,in the forthcoming paper \cite{Aoki}, it plays a key role for analyzing the polytope associated with the fan $\mathcal{F}(A(D))$.

This paper is organized as follows.
Through to Section \ref{Secproofdense}, we study the geometric and combinatorial aspects of our results.
In Section \ref{Secprel}, we recall the notions and results of \cite{APS,PPP} in terms of our notations. 
Before proving our results, we give some examples in Section \ref{Secexample}.
By using the examples, we prove Theorem \ref{introbij} in Section \ref{Secg}.
In Sections \ref{SecDehn} and \ref{Secproofdense}, we study $g$-vectors of $D$-laminates and their asymptotic behavior under Dehn twists, and prove Theorem \ref{dense}.

In Section \ref{Secrep}, we study the algebraic aspects of our results.
First, we recall two-term silting theory over module-finite algebras, in particular, they include complete gentle algebras.
Second, we give a geometric model of two-term silting theory over complete gentle algebras, and prove Theorem \ref{introtwosilt}.
Finally, we prove Theorem \ref{comp SB gtame} and also give a relation with a special class of special biserial algebras containing Brauer graph algebras (see Section \ref{appfindim}).
These examples are given in Section \ref{Sec example rep}.

%%%%%
%%%%%
%%%%%
\section{Preliminary} \label{Secprel}
In this section, we recall the notions and results of \cite{APS,PPP} (see also \cite{OPS}). 
Our notations are slightly different from theirs for the convenience of our purpose.

%%%
\subsection{$\circ\bullet$-marked surfaces}\label{secms}

\begin{definition}
 A {\it $\circ\bullet$-marked surface} is the pair $(S,M)$ consisting of the following data:
\begin{enumerate}
 \item[(a)] $S$ is a connected compact oriented Riemann surface with (possibly empty) boundary $\partial S$.
 \item[(b)] $M=M_{\circ} \sqcup M_{\bullet}$ is a non-empty finite set of marked points on $S$ such that
\begin{enumerate}
 \item[$\bullet$] both $M_{\circ}$ and $M_{\bullet}$ are not empty;
 \item[$\bullet$] each component of $\partial S$ has at least one marked point;
 \item[$\bullet$] the points of $M_{\circ}$ and $M_{\bullet}$ alternate on each boundary component.
\end{enumerate}
\end{enumerate}
 Any marked point in the interior of $S$ is called a {\it puncture}.
\end{definition}

 Let $(S,M)$ be a $\circ\bullet$-marked surface. Throughout this paper, when we consider intersections of curves, we assume that they intersect transversally in a minimum number of points in $S \setminus M$.

\begin{definition}
$(1)$ A {\it $\circ$-arc} (resp., $\bullet$-arc) $\gamma$ of $(S,M)$ is a curve in $S$ with endpoints in $M_{\circ}$ (resp., $M_{\bullet}$), considered up to isotopy, such that the following conditions are satisfied (see Figure \ref{Fig nonarc}):
\begin{enumerate}
 \item[$\bullet$] $\gamma$ does not intersect itself except at its endpoints;
 \item[$\bullet$] $\gamma$ is disjoint from $M$ and $\partial S$ except at its endpoints;
 \item[$\bullet$] $\gamma$ does not cut out a monogon without punctures.
\end{enumerate}

$(2)$ A {\it $\circ$-dissection} (resp., {\it $\bullet$-dissection}) is a maximal set of pairwise non-intersecting $\circ$-arcs (resp., $\bullet$-arcs) on $(S,M)$ which does not cut out a subsurface without marked points in $M_{\bullet}$ (resp., $M_{\circ}$).
\end{definition}

%%%%%%%%%% Figure
\begin{figure}[htp]
\begin{center}
\begin{tikzpicture}
\coordinate(ru)at(30:1); \coordinate(u)at(90:1); \coordinate(lu)at(150:1);
\coordinate(rd)at(-30:1); \coordinate(d)at(-90:1); \coordinate(ld)at(-150:1);
\draw(0,0)circle(10mm);
\draw(lu)--(70:0.4) (70:0.4)arc(70:-250:0.4) (-250:0.4)--(ru);
\fill(u)circle(1mm); \fill(ld)circle(1mm); \fill(rd)circle(1mm); \fill(0,0)circle(1mm);
\draw[fill=white](ru)circle(1mm); \draw[fill=white](lu)circle(1mm); \draw[fill=white](d)circle(1mm);
\end{tikzpicture}
 \hspace{10mm}
\begin{tikzpicture}
\coordinate(ru)at(30:1); \coordinate(u)at(90:1); \coordinate(lu)at(150:1);
\coordinate(rd)at(-30:1); \coordinate(d)at(-90:1); \coordinate(ld)at(-150:1);
\draw(0,0)circle(10mm);
\draw(lu)..controls(-0.5,0.1)and(-0.3,0)..(0,0);
\draw(ru)..controls(0.5,0.1)and(0.3,0)..(0,0);
\fill(u)circle(1mm); \fill(ld)circle(1mm); \fill(rd)circle(1mm); \fill(0,0)circle(1mm);
\draw[fill=white](ru)circle(1mm); \draw[fill=white](lu)circle(1mm); \draw[fill=white](d)circle(1mm);
\end{tikzpicture}
 \hspace{10mm}
\begin{tikzpicture}
\coordinate(ru)at(30:1); \coordinate(u)at(90:1); \coordinate(lu)at(150:1);
\coordinate(rd)at(-30:1); \coordinate(d)at(-90:1); \coordinate(ld)at(-150:1);
\draw(0,0)circle(10mm);
\draw(lu)..controls(90:0.6)and(130:0.2)..(150:0.2);
\draw(lu)..controls(210:0.6)and(170:0.2)..(150:0.2);
\fill(u)circle(1mm); \fill(ld)circle(1mm); \fill(rd)circle(1mm); \fill(0,0)circle(1mm);
\draw[fill=white](ru)circle(1mm); \draw[fill=white](lu)circle(1mm); \draw[fill=white](d)circle(1mm);
\end{tikzpicture}
\end{center}
   \caption{Examples of a curve which is not a $\circ$-arc}
   \label{Fig nonarc}
\end{figure}
%%%%%%%%%% Figure

\begin{remark} \label{invariant}
 Let $g$ be the genus of $S$, $b$ be the number of boundary components and $p_{\circ}$ (resp., $p_{\bullet}$) be the number of punctures in $M_{\circ}$ (resp., $M_{\bullet}$). By \cite[Proposition 1.11]{APS}, a $\circ$-dissection (resp., $\bullet$-dissection) of $(S,M)$ consists of $\vert M_{\circ}\vert +p_{\bullet}+b+2g-2=\vert M_{\bullet}\vert +p_{\circ}+b+2g-2$ $\circ$-arcs (resp., $\bullet$-arcs).
\end{remark}

By symmetry, the claims in this paper hold if we permute the symbols $\circ$ and $\bullet$. Thus we state only one side of each claim. 
A dissection divides $(S,M)$ into polygons with exactly one marked point.

\begin{proposition}\cite[Proposition 1.12]{APS}\label{poly}
 For a $\bullet$-dissection $D$ of $(S,M)$, each connected component of $S \setminus D$ is homeomorphic to one of the following:
\begin{enumerate}
 \item[$\bullet$] an open disk with precisely one marked point in $M_{\circ} \cap \partial S$;
 \item[$\bullet$] an open disk with precisely one marked point in $M_{\circ}$, but not in $\partial S$.
\end{enumerate}
\end{proposition}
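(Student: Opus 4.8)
The plan is to read off the decomposition from the two features packaged into the notion of a $\bullet$-dissection: that $D$ is \emph{maximal} among sets of pairwise non-intersecting $\bullet$-arcs subject to not cutting out a subsurface free of $M_\circ$-points, and that its constituent arcs obey the three admissibility conditions. Fix a connected component $F$ of $S \setminus D$ and regard it as a surface with boundary, where $\partial F$ is assembled from segments of arcs of $D$ and segments of $\partial S$ joined at $\bullet$-marked points (the endpoints of the arcs). I would first record the easy half: $F$ contains at least one point of $M_\circ$, since otherwise $F$ would be a subsurface cut out by $D$ with no marked point in $M_\circ$, which the definition forbids.

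Next I would show that $F$ is topologically an open disk carrying at most one interior puncture, and that its single $M_\circ$-point is unique. Both statements follow by contradiction from maximality. If $F$ had genus, more than one boundary component, or two or more punctures or $\circ$-points, then, using the alternating pattern of $M_\circ$ and $M_\bullet$ along each boundary component to locate $\bullet$-points on $\partial F$, one can draw a simple arc $\delta$ inside $F$ with both endpoints at such $\bullet$-points that genuinely reduces the complexity of $F$: it is not isotopic into $\partial F$, it is non-self-intersecting, and it can be chosen to cut out no \emph{unpunctured} monogon. When $\delta$ lowers genus or joins two distinct boundary components it is non-separating, so $F\setminus\delta$ stays connected and keeps its $\circ$-point; when instead $\delta$ separates two $M_\circ$-points, I choose it so that one such point lies on each side. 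In every case $D\cup\{\delta\}$ is a legal enlargement, contradicting maximality.

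The crucial and, I expect, hardest part is precisely this construction of $\delta$: one must produce a separating or complexity-reducing arc that simultaneously (i) has its endpoints at $\bullet$-points actually present on $\partial F$, (ii) creates no empty monogon, and (iii) leaves each resulting region still meeting $M_\circ$, in \emph{every} configuration of excess topology or excess marked points. Verifying that such a ``balanced'' admissible arc can always be found, compatibly with the no-empty-monogon constraint, is the delicate topological bookkeeping at the heart of the argument.

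Having excluded all other possibilities, $F$ is a disk with exactly one point of $M_\circ$. That point is either an interior $\circ$-puncture, giving the second listed type, or a boundary point lying on a $\partial S$-segment of $\partial F$, giving the first; the condition that no arc cut out a monogon without punctures rules out the degenerate unpunctured monogon and confirms that these two types exhaust the components. As a consistency check, an Euler-characteristic computation over the resulting cell decomposition is compatible with the cardinality $|M_\bullet|+p_\circ+b+2g-2$ of $D$ recorded in Remark \ref{invariant}.
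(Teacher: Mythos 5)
First, note that the paper itself contains no proof of Proposition \ref{poly}: it is imported verbatim from \cite[Proposition 1.12]{APS}, so your attempt can only be judged on its own merits, not against an in-paper argument.

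Your overall strategy (maximality of $D$ plus the no-empty-subsurface condition, contradiction by adjoining a complexity-reducing $\bullet$-arc) is the standard and correct one, and the easy half ($F$ meets $M_\circ$) is fine. But there is a genuine gap, and you have flagged it yourself: the entire content of the proposition lives in the construction of the arc $\delta$ in \emph{every} bad configuration, and your proposal describes the properties $\delta$ must have rather than producing it. Beyond being incomplete, the construction as you describe it actually fails in one configuration that must be excluded. You only consider arcs ``with both endpoints at $\bullet$-points on $\partial F$''. Now suppose $F$ is a disk containing exactly one point of $M_\circ$ (say on $\partial S$) together with one $\bullet$-puncture in its interior; the proposition forbids this ($F$ would contain two marked points), so your argument must eliminate it. But every simple arc in a disk with both endpoints on the boundary separates it, and since $F$ has only one $M_\circ$-point, one of the two resulting pieces contains no point of $M_\circ$ whatsoever --- so \emph{no} boundary-to-boundary arc gives a legal enlargement of $D$, and your maximality contradiction evaporates. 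The only way to kill this configuration is to use an arc with an endpoint at the interior $\bullet$-puncture itself (such an arc is non-separating in $F$, hence legal), a type of arc your construction never considers. This shows the case analysis cannot be organized purely around boundary $\bullet$-points, and the same care is needed wherever interior $\bullet$-punctures occur; until that bookkeeping is actually carried out, the proposal remains a plan rather than a proof.
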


For a $\bullet$-dissection $D$ of $(S,M)$, the closure of a connected component of $S \setminus D$ is called a {\it polygon of $D$}. Proposition \ref{poly} implies that any polygon of $D$ has exactly one marked point in $M_{\circ}$. We denote by $\triangle_{v}$ the polygon with marked point $v \in M_{\circ}$ (see Figure \ref{Fig polygon}).

%%%%%%%%%% Figure
\begin{figure}[htp]
\begin{center}
\begin{tikzpicture}[baseline=0mm,scale=1]
\coordinate(l)at(180:1); \coordinate(lu)at(120:1); \coordinate(ru)at(60:1);
\coordinate(r)at(0:1); \coordinate(ld)at(-120:1); \coordinate(rd)at(-60:1);
\draw(l)--(lu)--(ru)--(r)--(rd)--(ld)--(l);
\fill(l)circle(1mm); \fill(lu)circle(1mm); \fill(ru)circle(1mm);
\fill(r)circle(1mm); \fill(ld)circle(1mm); \fill(rd)circle(1mm);
\draw(0,0)circle(1mm);
\node at(0.2,0.2){$v$};
\end{tikzpicture}
 \hspace{20mm}
\begin{tikzpicture}[baseline=0mm,scale=1]
\coordinate(l)at(180:1); \coordinate(lu)at(120:1); \coordinate(ru)at(60:1);
\coordinate(r)at(0:1); \coordinate(ld)at(-120:1); \coordinate(rd)at(-60:1);
\draw(l)--(lu)--(ru)--(r)--(rd)--(ld)--(l);
\fill(l)circle(1mm); \fill(lu)circle(1mm); \fill(ru)circle(1mm);
\fill(r)circle(1mm); \fill(ld)circle(1mm); \fill(rd)circle(1mm);
\fill[pattern=north east lines](ld)--(-0.8,-1.2)--(0.8,-1.2)--(rd);
\draw[fill=white]($(ld)!0.5!(rd)$)circle(1mm);
\node at(0.2,-0.7){$v$};
\end{tikzpicture}
\end{center}
   \caption{Polygon $\triangle_v$ for a marked point $v \in M_{\circ}$}
   \label{Fig polygon}
\end{figure}
%%%%%%%%%% Figure

\begin{defprop}\cite[Proposition 3.6]{PPP}\label{dual}
 For a $\bullet$-dissection $D$ of $(S,M)$, there is a unique $\circ$-dissection $D^{\ast}$ whose each $\circ$-arc intersects exactly one $\bullet$-arc of $D$. We have $D^{\ast\ast}=D$. 
 We call $D^{\ast}$ the {\it dual dissection} of $D$. For $d \in D$, we write the corresponding $\circ$-arc by $d^{\ast} \in D^{\ast}$.
\end{defprop}

%%%
%%%
\subsection{$g$-vectors of $D$-laminates and $D$-laminations}

We fix a $\bullet$-dissection $D$ of $(S,M)$. 

\begin{definition}\label{$D$-laminate}
 $(1)$ A {\it $\circ$-laminate} of $(S,M)$ is a curve $\gamma$ in $S$, considered up to isotopy relative to $M$, that is either
\begin{enumerate}
 \item[$\bullet$] a closed curve, or
 \item[$\bullet$] a curve whose ends are unmarked points on $\partial S$ or spirals around punctures in $M_{\circ}$ either clockwise or counterclockwise (see Figure \ref{Fig laminate}).
\end{enumerate}

 $(2)$ %For a $\bullet$-dissection $D$ of $(S,M)$, 
 A {\it $D$-laminate} is a non-self-intersecting $\circ$-laminate $\gamma$ of $(S,M)$ intersecting at least one $\bullet$-arc of $D$ such that the following condition is satisfied:
\begin{center}
 $(\ast)$ Whenever $\gamma$ intersects $d \in D$, the endpoints $v$ and $v'$ of $d^{\ast}$ lie on opposite sides of $\gamma$ in $\triangle_{v} \cup \triangle_{v'}$.
\end{center}
 Here, we consider that the point $v$ lies on the right (resp., left) to $\gamma$ if an end of $\gamma$ is a spiral around $v$ clockwise (resp., counterclockwise) and we take its orientation as approaching $v$ (see Figures \ref{Fig Seg of D-lam} and \ref{Fig Seg of nonD-lam}).
\end{definition}

%%%%%%%%%% Figure
\begin{figure}[htp]
\begin{center}
\begin{tikzpicture}[baseline=0mm]
\coordinate(u)at(90:0.5); \coordinate(l)at(0:0.5); \coordinate(ll)at(180:0.5); \coordinate(d)at(-90:0.5);
\coordinate(r)at(2.5,0);
\draw(l)..controls(1.2,0)and(1.8,0.3)..(2.5,0.3);
\draw(2.5,0.3)arc(90:-90:2.6mm); \draw(2.5,-0.22)arc(-90:-270:2mm);
\draw[pattern=north east lines] (u) arc (90:-90:5mm); \draw[dotted] (u) arc (90:270:5mm);
\draw[fill=white](r)circle(1mm);
\draw[fill=white](40:0.5)circle(1mm); \fill(-40:0.5)circle(1mm);
\end{tikzpicture}
\hspace{10mm}
\begin{tikzpicture}[baseline=0mm]
\coordinate(u)at(90:0.5); \coordinate(l)at(0,0); \coordinate(d)at(-90:0.5);
\coordinate(r)at(2.5,0);
\draw(0,0.3)arc(90:270:2.6mm); \draw(0,-0.22)arc(-90:90:2mm);
\draw(0,0.3)--(2.5,0.3);
\draw(2.5,0.3)arc(90:-90:2.6mm); \draw(2.5,-0.22)arc(-90:-270:2mm);
\draw[fill=white](r)circle(1mm);
\draw[fill=white](l)circle(1mm);
\node at(1.25,0){$\gamma$}; \node at(0,-0.4){$u$}; \node at(2.5,-0.4){$v$};
\end{tikzpicture}
\end{center}
   \caption{Examples of a $\circ$-laminate, where an end of $\gamma$ is a spiral around $u$ counterclockwise and the other is a spiral around $v$ clockwise}
   \label{Fig laminate}
\end{figure}
%%%%%%%%%% Figure

%%%%%%%%%% Figure
\begin{figure}[htp]
\begin{center}
\begin{tikzpicture}[baseline=0mm]
\coordinate(u)at(0,0.8); \coordinate(d)at(0,-0.8);
\coordinate(llu)at(-1.5,1); \coordinate(lu)at(-0.6,1); \coordinate(rru)at(1.5,1); \coordinate(ru)at(0.6,1);
\coordinate(lld)at(-1.5,-1); \coordinate(ld)at(-0.6,-1); \coordinate(rrd)at(1.5,-1); \coordinate(rd)at(0.6,-1);
\coordinate(lllu)at(-2,0.5); \coordinate(llld)at(-2,-0.5); \coordinate(rrru)at(2,0.5); \coordinate(rrrd)at(2,-0.5);
\draw(llu)--(lu)--(u)--(ru)--(rru)--(rrru)--(rrrd)--(rrd)--(rd)--(d)--(ld)--(lld)--(llld)--(lllu)--(llu) (u)--(d);
\draw(2,0)..controls(1,0.5)and(0.5,0)..(-1.75,-0.75);
\draw(1.05,1)..controls(1,0.5)and(-0.5,-0.3)..(-1.2,-0.3);
\draw(-1.2,-0.3)arc(270:90:2.6mm); \draw(-1.2,0.22)arc(90:-90:2mm);
\fill(lu)circle(1mm); \fill(ru)circle(1mm); \fill(ld)circle(1mm); \fill(rd)circle(1mm);
\fill(u)circle(1mm); \fill(d)circle(1mm);
\fill(llu)circle(1mm); \fill(lld)circle(1mm); \fill(rru)circle(1mm); \fill(rrd)circle(1mm);
\fill(lllu)circle(1mm); \fill(llld)circle(1mm); \fill(rrru)circle(1mm); \fill(rrrd)circle(1mm);
\draw[fill=white](-1.2,0)circle(1mm); \draw[fill=white](1.2,0)circle(1mm);
\node at(-1.2,0.4){$v$}; \node at(1.2,-0.3){$v'$};
\end{tikzpicture}
\end{center}
   \caption{An example of segments of a $D$-laminate in $\triangle_{v} \cup \triangle_{v'}$}
   \label{Fig Seg of D-lam}
\end{figure}
%%%%%%%%%% Figure
%%%%%%%%%% Figure
\begin{figure}[htp]
\begin{center}
\begin{tikzpicture}[baseline=0mm]
\coordinate(u)at(0,0.8); \coordinate(d)at(0,-0.8);
\coordinate(llu)at(-1.5,1); \coordinate(lu)at(-0.6,1); \coordinate(rru)at(1.5,1); \coordinate(ru)at(0.6,1);
\coordinate(lld)at(-1.5,-1); \coordinate(ld)at(-0.6,-1); \coordinate(rrd)at(1.5,-1); \coordinate(rd)at(0.6,-1);
\coordinate(lllu)at(-2,0.5); \coordinate(llld)at(-2,-0.5); \coordinate(rrru)at(2,0.5); \coordinate(rrrd)at(2,-0.5);
\draw(llu)--(lu)--(u)--(ru)--(rru)--(rrru)--(rrrd)--(rrd)--(rd)--(d)--(ld)--(lld)--(llld)--(lllu)--(llu) (u)--(d);
\draw(2,0)..controls(1,-0.5)and(0,0)..(-1.75,-0.75);
\draw(1.05,1)..controls(1,0.5)and(-0.5,0.3)..(-1.2,0.3);
\draw(-1.2,0.3)arc(-270:-90:2.6mm); \draw(-1.2,-0.22)arc(-90:90:2mm);
\fill(lu)circle(1mm); \fill(ru)circle(1mm); \fill(ld)circle(1mm); \fill(rd)circle(1mm);
\fill(u)circle(1mm); \fill(d)circle(1mm);
\fill(llu)circle(1mm); \fill(lld)circle(1mm); \fill(rru)circle(1mm); \fill(rrd)circle(1mm);
\fill(lllu)circle(1mm); \fill(llld)circle(1mm); \fill(rrru)circle(1mm); \fill(rrrd)circle(1mm);
\draw[fill=white](-1.2,0)circle(1mm); \draw[fill=white](1.2,0)circle(1mm);
\node at(-1.2,0.45){$v$}; \node at(1.2,0.3){$v'$};
\end{tikzpicture}
\end{center}
   \caption{An example of segments of a $\circ$-laminate in $\triangle_{v} \cup \triangle_{v'}$, neither of which satisfy the condition $(\ast)$}
   \label{Fig Seg of nonD-lam}
\end{figure}
%%%%%%%%%% Figure

 A $D$-laminate is called a {\it closed D-laminate} if it is a closed curve. Remark that non-closed $D$-laminates coincide with $D$-slaloms in \cite{PPP}. 
Now, we treat a certain collection of $D$-laminates, which is central in this paper. 

\begin{definition}
 %Let $D$ be a $\bullet$-dissection of $(S,M)$. 
 We say that two $D$-laminates are {\it compatible} if they do not intersect. A finite multi-set of pairwise compatible $D$-laminates is called a {\it $D$-lamination}. 
A $D$-lamination is said to be 
\begin{enumerate}
\item[$\bullet$] {\it reduced} if it consists of pairwise distinct non-closed $D$-laminates, and  
\item[$\bullet$] {\it complete} if it is reduced and is maximal as a set.
\end{enumerate}
\end{definition}

 Let $\gamma$ be a $D$-laminate. Using the notations in the condition $(\ast)$, let $p$ be an intersection point of $\gamma$ and $d$ such that $\gamma$ leaves $\triangle_{v}$ to enter $\triangle_{v'}$ via $p$. Then $p$ is said to be {\it positive} (resp., {\it negative}) if $v$ is to its right (resp., left), or equivalently, $v'$ is to its left (resp., right). See Figure \ref{Fig pos neg}. For $d \in D$, we define an integer 
{\setlength\arraycolsep{0.5mm}
\begin{eqnarray}\label{def g-vect} 
 g(\gamma)_d &:=& \#\{\text{positive intersection points of $\gamma$ and $d$}\}\\
 &&- \#\{\text{negative intersection points of $\gamma$ and $d$}\}.\nonumber
\end{eqnarray}}
\noindent The {\it $g$-vector} $g(\gamma)$ of $\gamma$ is given by $\bigl(g(\gamma)_d\bigr)_{d \in D} \in \mathbb{Z}^{\vert D\vert }$, where $\vert D\vert $ is the number of $\bullet$-arcs of $D$.
We remark that if $\gamma$ and $d$ intersect twice, then their intersection points are either positive or negative simultaneously. 
Thus, the absolute value of $g(\gamma)_d$ just counts the number of intersection points of $\gamma$ and $d$.

%%%%%%%%%% Figure
\begin{figure}[htp]
\begin{center}
\begin{tikzpicture}[baseline=0mm]
\coordinate(u)at(0,0.8); \coordinate(d)at(0,-0.8);
\coordinate(llu)at(-1.5,1); \coordinate(lu)at(-0.6,1); \coordinate(rru)at(1.5,1); \coordinate(ru)at(0.6,1);
\coordinate(lld)at(-1.5,-1); \coordinate(ld)at(-0.6,-1); \coordinate(rrd)at(1.5,-1); \coordinate(rd)at(0.6,-1);
\draw(llu)--(lu)--(u)--(ru)--(rru) (lld)--(ld)--(d)--(rd)--(rrd) (u)--(d);
\draw(-1.5,0.5)..controls(-0.5,0.5)and(0.5,-0.5)..(1.5,-0.5);
\fill(lu)circle(1mm); \fill(ru)circle(1mm); \fill(ld)circle(1mm); \fill(rd)circle(1mm);
\fill(u)circle(1mm); \fill(d)circle(1mm);
\draw[fill=white](-1.4,0)circle(1mm); \draw[fill=white](1.4,0)circle(1mm);
\node at(0.2,0.15){$p$};
\end{tikzpicture}
 \hspace{10mm}
\begin{tikzpicture}[baseline=0mm]
\coordinate(u)at(0,0.8); \coordinate(d)at(0,-0.8);
\coordinate(llu)at(-1.5,1); \coordinate(lu)at(-0.6,1); \coordinate(rru)at(1.5,1); \coordinate(ru)at(0.6,1);
\coordinate(lld)at(-1.5,-1); \coordinate(ld)at(-0.6,-1); \coordinate(rrd)at(1.5,-1); \coordinate(rd)at(0.6,-1);
\draw(llu)--(lu)--(u)--(ru)--(rru) (lld)--(ld)--(d)--(rd)--(rrd) (u)--(d);
\draw(-1.5,-0.5)..controls(-0.5,-0.5)and(0.5,0.5)..(1.5,0.5);
\fill(lu)circle(1mm); \fill(ru)circle(1mm); \fill(ld)circle(1mm); \fill(rd)circle(1mm);
\fill(u)circle(1mm); \fill(d)circle(1mm);
\draw[fill=white](-1.4,0)circle(1mm); \draw[fill=white](1.4,0)circle(1mm);
\node at(-0.2,0.15){$q$};
\end{tikzpicture}
\end{center}
   \caption{A positive intersection point $p$ and a negative intersection point $q$}
   \label{Fig pos neg}
\end{figure}
%%%%%%%%%% Figure

\begin{definition}
For a $D$-lamination $\mathcal{X}$, let $g(\mathcal{X}):=\sum_{\gamma\in \mathcal{X}}g(\gamma) \in \mathbb{Z}^{\vert D\vert }$, where $g(\emptyset):=0$. 
We call $g(\mathcal{X})$ the \emph{$g$-vector} of $\mathcal{X}$. 
In addition, let $C(\mathcal{X})$ be a cone in $\mathbb{R}^{\vert D\vert }$ spanned by $g(\gamma)$ for all $\gamma\in \mathcal{X}$, that is,  
\[
 C(\mathcal{X}) := \biggl\{\sum_{\gamma\in \mathcal{X}}a_{\gamma}g(\gamma)\mid a_{\gamma}\in \mathbb{R}_{\ge0}\biggl\} \subseteq \mathbb{R}^{\vert D\vert }.
\]
We call $C(\mathcal{X})$ the \emph{$g$-vector cone} of $\mathcal{X}$. We denote by $\mathcal{F}(D)$ a collection of $g$-vector cones of all reduced $D$-laminations. Let 
\[
 \vert \mathcal{F}(D)\vert  := \bigcup_{\mathcal{X}} C(\mathcal{X}) \subseteq \mathbb{R}^{\vert D\vert },
\]
where $\mathcal{X}$ runs over all reduced $D$-laminations.
\end{definition}

The invariants, $g$-vectors and $g$-vector cones, have good properties.

\begin{theorem} \cite[Theorems 5.12 and 6.12]{PPP} \label{g base}
   \begin{enumerate} 
   \item Every $g$-vector cone $C(\mathcal{X})$ of a complete $D$-lamination $\mathcal{X}$ is non-singular, that is, the set $\{g(\gamma)\mid \gamma\in \mathcal{X}\}$ forms a free basis of $\mathbb{Z}^{\vert D\vert }$. 
   \item A reduced $D$-lamination is complete if and only if it has precisely $\vert D\vert $ elements. 
   \end{enumerate}
\end{theorem}

\begin{theorem}\cite[Theorem 6.14]{PPP}\label{comp simp fan}
If $\mathcal{F}(D)$ is finite, then all $D$-laminates are non-closed.
In this case, $\mathcal{F}(D)$ is a non-singular fan whose maximal faces correspond to complete $D$-laminations and such that $\vert \mathcal{F}(D)\vert =\mathbb{R}^{\vert D\vert }$.
\end{theorem}
In Corollary \ref{FD is non-singular}, it is shown via representation theory that $\mathcal{F}(D)$ is a non-singular fan even when it is not finite.

%%%%%
%%%%%
\section{Examples} \label{Secexample}
In this section, we examine our notions defined in the previous section. Here, for each integer $1\leq i\leq n$, let $e_i$ be an integer vector $(x_1,\ldots,x_n)\in \mathbb{Z}^n$ such that $x_i=1$ and $x_j=0$ for all $j\neq i$.

(1) 
Let $(S_1(3),M_1(3))$ be a disk with $\vert M_1(3)\vert =8$ such that all marked points lie on $\partial S_1(3)$. For a $\bullet$-dissection of $(S_1(3),M_1(3))$
\[
D=D_1(3)=
\begin{tikzpicture}[baseline=-1mm]
\coordinate(r)at(0:1); \coordinate(ru)at(45:1); \coordinate(u)at(90:1); \coordinate(lu)at(135:1);
\coordinate(l)at(180:1); \coordinate(rd)at(-45:1); \coordinate(d)at(-90:1); \coordinate(ld)at(-135:1);
\draw(0,0)circle(10mm);
\draw(lu)--node[fill=white,inner sep=1]{$1$}(ld)--node[fill=white,inner sep=1]{$2$}(rd)--node[fill=white,inner sep=1]{$3$}(ru);
\fill(lu)circle(1mm); \fill(ru)circle(1mm); \fill(ld)circle(1mm); \fill(rd)circle(1mm);
\draw[fill=white](r)circle(1mm); \draw[fill=white](u)circle(1mm); 
\draw[fill=white](l)circle(1mm); \draw[fill=white](d)circle(1mm);
\end{tikzpicture}\ ,
\]
all $D$-laminates and the corresponding $g$-vectors are given as follows:
\[
\renewcommand{\arraystretch}{1.5}
\setlength{\tabcolsep}{0.5mm}
\begin{tabular}{ccccccccc}
%%%%%1
\begin{tikzpicture}[baseline=0mm,scale=0.7]
\coordinate(r)at(0:1); \coordinate(ru)at(45:1); \coordinate(u)at(90:1); \coordinate(lu)at(135:1);
\coordinate(l)at(180:1); \coordinate(rd)at(-45:1); \coordinate(d)at(-90:1); \coordinate(ld)at(-135:1);
\draw(0,0)circle(10mm); \draw[dotted](lu)--(ld)--(rd)--(ru);
\fill(lu)circle(1.4mm); \fill(ru)circle(1.4mm); \fill(ld)circle(1.4mm); \fill(rd)circle(1.4mm);
\draw[fill=white](r)circle(1.4mm); \draw[fill=white](u)circle(1.4mm); 
\draw[fill=white](l)circle(1.4mm); \draw[fill=white](d)circle(1.4mm);
\draw[blue](70:1)..controls(0,0)..(160:1);
\end{tikzpicture}
 &%%%%%2
\begin{tikzpicture}[baseline=0mm,scale=0.7]
\coordinate(r)at(0:1); \coordinate(ru)at(45:1); \coordinate(u)at(90:1); \coordinate(lu)at(135:1);
\coordinate(l)at(180:1); \coordinate(rd)at(-45:1); \coordinate(d)at(-90:1); \coordinate(ld)at(-135:1);
\draw(0,0)circle(10mm); \draw[dotted](lu)--(ld)--(rd)--(ru);
\fill(lu)circle(1.4mm); \fill(ru)circle(1.4mm); \fill(ld)circle(1.4mm); \fill(rd)circle(1.4mm);
\draw[fill=white](r)circle(1.4mm); \draw[fill=white](u)circle(1.4mm); 
\draw[fill=white](l)circle(1.4mm); \draw[fill=white](d)circle(1.4mm);
\draw[blue](110:1)..controls(0,0)..(200:1);
\end{tikzpicture}
 &%%%%%3
\begin{tikzpicture}[baseline=0mm,scale=0.7]
\coordinate(r)at(0:1); \coordinate(ru)at(45:1); \coordinate(u)at(90:1); \coordinate(lu)at(135:1);
\coordinate(l)at(180:1); \coordinate(rd)at(-45:1); \coordinate(d)at(-90:1); \coordinate(ld)at(-135:1);
\draw(0,0)circle(10mm); \draw[dotted](lu)--(ld)--(rd)--(ru);
\fill(lu)circle(1.4mm); \fill(ru)circle(1.4mm); \fill(ld)circle(1.4mm); \fill(rd)circle(1.4mm);
\draw[fill=white](r)circle(1.4mm); \draw[fill=white](u)circle(1.4mm); 
\draw[fill=white](l)circle(1.4mm); \draw[fill=white](d)circle(1.4mm);
\draw[blue](70:1)--(-110:1);
\end{tikzpicture}
 &%%%%%4
\begin{tikzpicture}[baseline=0mm,scale=0.7]
\coordinate(r)at(0:1); \coordinate(ru)at(45:1); \coordinate(u)at(90:1); \coordinate(lu)at(135:1);
\coordinate(l)at(180:1); \coordinate(rd)at(-45:1); \coordinate(d)at(-90:1); \coordinate(ld)at(-135:1);
\draw(0,0)circle(10mm); \draw[dotted](lu)--(ld)--(rd)--(ru);
\fill(lu)circle(1.4mm); \fill(ru)circle(1.4mm); \fill(ld)circle(1.4mm); \fill(rd)circle(1.4mm);
\draw[fill=white](r)circle(1.4mm); \draw[fill=white](u)circle(1.4mm); 
\draw[fill=white](l)circle(1.4mm); \draw[fill=white](d)circle(1.4mm);
\draw[blue](110:1)..controls(0,0)..(-70:1);
\end{tikzpicture}
 &%%%%%5
\begin{tikzpicture}[baseline=0mm,scale=0.7]
\coordinate(r)at(0:1); \coordinate(ru)at(45:1); \coordinate(u)at(90:1); \coordinate(lu)at(135:1);
\coordinate(l)at(180:1); \coordinate(rd)at(-45:1); \coordinate(d)at(-90:1); \coordinate(ld)at(-135:1);
\draw(0,0)circle(10mm); \draw[dotted](lu)--(ld)--(rd)--(ru);
\fill(lu)circle(1.4mm); \fill(ru)circle(1.4mm); \fill(ld)circle(1.4mm); \fill(rd)circle(1.4mm);
\draw[fill=white](r)circle(1.4mm); \draw[fill=white](u)circle(1.4mm); 
\draw[fill=white](l)circle(1.4mm); \draw[fill=white](d)circle(1.4mm);
\draw[blue](70:1)..controls(0,0)..(-20:1);
\end{tikzpicture}\\
$(1,0,0)$&$(-1,0,0)$&$(0,1,0)$&$(0,-1,0)$&$(0,0,1)$\\
 %%%%%6
\begin{tikzpicture}[baseline=0mm,scale=0.7]
\coordinate(r)at(0:1); \coordinate(ru)at(45:1); \coordinate(u)at(90:1); \coordinate(lu)at(135:1);
\coordinate(l)at(180:1); \coordinate(rd)at(-45:1); \coordinate(d)at(-90:1); \coordinate(ld)at(-135:1);
\draw(0,0)circle(10mm); \draw[dotted](lu)--(ld)--(rd)--(ru);
\fill(lu)circle(1.4mm); \fill(ru)circle(1.4mm); \fill(ld)circle(1.4mm); \fill(rd)circle(1.4mm);
\draw[fill=white](r)circle(1.4mm); \draw[fill=white](u)circle(1.4mm); 
\draw[fill=white](l)circle(1.4mm); \draw[fill=white](d)circle(1.4mm);
\draw[blue](110:1)..controls(0,0)..(20:1);
\end{tikzpicture}
 &%%%%%7
\begin{tikzpicture}[baseline=0mm,scale=0.7]
\coordinate(r)at(0:1); \coordinate(ru)at(45:1); \coordinate(u)at(90:1); \coordinate(lu)at(135:1);
\coordinate(l)at(180:1); \coordinate(rd)at(-45:1); \coordinate(d)at(-90:1); \coordinate(ld)at(-135:1);
\draw(0,0)circle(10mm); \draw[dotted](lu)--(ld)--(rd)--(ru);
\fill(lu)circle(1.4mm); \fill(ru)circle(1.4mm); \fill(ld)circle(1.4mm); \fill(rd)circle(1.4mm);
\draw[fill=white](r)circle(1.4mm); \draw[fill=white](u)circle(1.4mm); 
\draw[fill=white](l)circle(1.4mm); \draw[fill=white](d)circle(1.4mm);
\draw[blue](160:1)..controls(0,0)..(-70:1);
\end{tikzpicture}
 &%%%%%8
\begin{tikzpicture}[baseline=0mm,scale=0.7]
\coordinate(r)at(0:1); \coordinate(ru)at(45:1); \coordinate(u)at(90:1); \coordinate(lu)at(135:1);
\coordinate(l)at(180:1); \coordinate(rd)at(-45:1); \coordinate(d)at(-90:1); \coordinate(ld)at(-135:1);
\draw(0,0)circle(10mm); \draw[dotted](lu)--(ld)--(rd)--(ru);
\fill(lu)circle(1.4mm); \fill(ru)circle(1.4mm); \fill(ld)circle(1.4mm); \fill(rd)circle(1.4mm);
\draw[fill=white](r)circle(1.4mm); \draw[fill=white](u)circle(1.4mm); 
\draw[fill=white](l)circle(1.4mm); \draw[fill=white](d)circle(1.4mm);
\draw[blue](20:1)..controls(0,0)..(-110:1);
\end{tikzpicture}
 &%%%%%9
\begin{tikzpicture}[baseline=0mm,scale=0.7]
\coordinate(r)at(0:1); \coordinate(ru)at(45:1); \coordinate(u)at(90:1); \coordinate(lu)at(135:1);
\coordinate(l)at(180:1); \coordinate(rd)at(-45:1); \coordinate(d)at(-90:1); \coordinate(ld)at(-135:1);
\draw(0,0)circle(10mm); \draw[dotted](lu)--(ld)--(rd)--(ru);
\fill(lu)circle(1.4mm); \fill(ru)circle(1.4mm); \fill(ld)circle(1.4mm); \fill(rd)circle(1.4mm);
\draw[fill=white](r)circle(1.4mm); \draw[fill=white](u)circle(1.4mm); 
\draw[fill=white](l)circle(1.4mm); \draw[fill=white](d)circle(1.4mm);
\draw[blue](160:1)--(20:1);
\end{tikzpicture}\\
$(0,0,-1)$&$(1,-1,0)$&$(0,1,-1)$&$(1,0,-1)$
\end{tabular}
\]
There are 14 complete $D$-laminations. The corresponding fan $\mathcal{F}(D)$ of $g$-vector cones is given as in the left diagram of Figure \ref{Fig g-cones}. 

The above construction is naturally generalized. For a positive integer $n$, we define a $\circ\bullet$-marked surface $(S_1,M_1):=(S_1(n),M_1(n))$ as follows: Let $(S_1,M_1)$ be a disk with $\vert M_1\vert =2n+2$ such that all marked points lie on $\partial S_1$. Let $D_1:=D_1(n)$ be a $\bullet$-dissection of $(S_1,M_1)$ as in the following diagram. 
%%%%%%%%%% Figure
\[
\begin{tikzpicture}[baseline=-10mm]
\coordinate(0)at(90:1); \coordinate(1)at(120:1); \coordinate(2)at(150:1); \coordinate(3)at(180:1);
\coordinate(4)at(210:1); \coordinate(5)at(240:1); \coordinate(6)at(270:1); \coordinate(7)at(300:1);
\coordinate(n)at(60:1); \coordinate(8)at(-30:1);
\draw(0,0)circle(10mm);
\draw(1)..controls(150:0.7)..node[fill=white,inner sep=1]{$1$}(3)..controls(210:0.7)..node[fill=white,inner sep=1]{$2$}(5)..controls(270:0.7)..node[fill=white,inner sep=1]{$3$}(7)--(-40:0.8) (n)--node[fill=white,inner sep=1]{$n$}(12:0.65);
\fill(1)circle(1mm); \fill(3)circle(1mm); \fill(5)circle(1mm); \fill(7)circle(1mm); \fill(n)circle(1mm);
\draw[fill=white](0)circle(1mm); \draw[fill=white](2)circle(1mm); 
\draw[fill=white](4)circle(1mm); \draw[fill=white](6)circle(1mm); \draw[fill=white](8)circle(1mm);
\node at(-5:0.65){$\vdots$};
\end{tikzpicture}
\]
%%%%%%%%%% Figure
Now, we classify all $D_1$-laminates in terms of their $g$-vectors. In the same way as above, it is easily shown that the $g$-vector of any $D_1$-laminate lies in the set 
\begin{equation}\label{triangulation}
    \{e_i - e_j \mid 1\leq i < j \leq n\} \cup \{\pm e_i\mid 1\leq i \leq n\}. 
\end{equation}
Moreover, each integer vector in \eqref{triangulation} is the $g$-vector of exactly one $D_1$-laminate. Thus the correspondence $\gamma\mapsto g(\gamma)$ gives a bijection between the set of $D_1$-laminates and the set \eqref{triangulation}. 
In particular, there are precisely $\frac{1}{2}n(n+3)$ $D_1$-laminates.

(2) 
Let $(S_2(3),M_2(3))$ be a disk with $\vert M_2(3)\vert =7$ such that all marked points lie on $\partial S_2(3)$. For a $\bullet$-dissection of $(S_2(3),M_2(3))$
\[
D=D_2(3)=
\begin{tikzpicture}[baseline=-1mm]
\coordinate(ru)at(30:1); \coordinate(u)at(90:1); \coordinate(lu)at(150:1);
\coordinate(rd)at(-30:1); \coordinate(d)at(-90:1); \coordinate(ld)at(-150:1);
\draw(0,0)circle(10mm);
\draw(u)--node[fill=white,inner sep=1]{$3$}(rd)--node[fill=white,inner sep=1]{$2$}(ld)--node[fill=white,inner sep=1]{$1$}(u);
\fill(u)circle(1mm); \fill(ld)circle(1mm); \fill(rd)circle(1mm);
\draw[fill=white](ru)circle(1mm); \draw[fill=white](lu)circle(1mm); 
\draw[fill=white](d)circle(1mm); \draw[fill=white](0,0)circle(1mm);
\end{tikzpicture}\ ,
\]
all $D$-laminates and the corresponding $g$-vectors are given as follows:
\[
\renewcommand{\arraystretch}{1.5}
\setlength{\tabcolsep}{4mm}
\begin{tabular}{cccccc}
%%%%%1
\begin{tikzpicture}[baseline=0mm,scale=0.7]
\coordinate(ru)at(30:1); \coordinate(u)at(90:1); \coordinate(lu)at(150:1);
\coordinate(rd)at(-30:1); \coordinate(d)at(-90:1); \coordinate(ld)at(-150:1);
\draw(0,0)circle(10mm); \draw[dotted](u)--(rd)--(ld)--(u);
\fill(u)circle(1.4mm); \fill(ld)circle(1.4mm); \fill(rd)circle(1.4mm);
\draw[fill=white](ru)circle(1.4mm); \draw[fill=white](lu)circle(1.4mm); 
\draw[fill=white](d)circle(1.4mm); \draw[fill=white](0,0)circle(1.4mm);
\draw[blue](130:1)--(-160:0.3) (-160:0.3)arc(-160:20:0.3) (20:0.3)arc(20:200:0.26);
\end{tikzpicture}
 &%%%%%2
\begin{tikzpicture}[baseline=0mm,scale=0.7]
\coordinate(ru)at(30:1); \coordinate(u)at(90:1); \coordinate(lu)at(150:1);
\coordinate(rd)at(-30:1); \coordinate(d)at(-90:1); \coordinate(ld)at(-150:1);
\draw(0,0)circle(10mm); \draw[dotted](u)--(rd)--(ld)--(u);
\fill(u)circle(1.4mm); \fill(ld)circle(1.4mm); \fill(rd)circle(1.4mm);
\draw[fill=white](ru)circle(1.4mm); \draw[fill=white](lu)circle(1.4mm); 
\draw[fill=white](d)circle(1.4mm); \draw[fill=white](0,0)circle(1.4mm);
\draw[blue](170:1)--(90:0.3) (90:0.3)arc(90:-90:0.3) (-90:0.3)arc(-90:-270:0.26);
\end{tikzpicture}
 &%%%%%3
\begin{tikzpicture}[baseline=0mm,scale=0.7]
\coordinate(ru)at(30:1); \coordinate(u)at(90:1); \coordinate(lu)at(150:1);
\coordinate(rd)at(-30:1); \coordinate(d)at(-90:1); \coordinate(ld)at(-150:1);
\draw(0,0)circle(10mm); \draw[dotted](u)--(rd)--(ld)--(u);
\fill(u)circle(1.4mm); \fill(ld)circle(1.4mm); \fill(rd)circle(1.4mm);
\draw[fill=white](ru)circle(1.4mm); \draw[fill=white](lu)circle(1.4mm); 
\draw[fill=white](d)circle(1.4mm); \draw[fill=white](0,0)circle(1.4mm);
\draw[blue](250:1)--(-40:0.3) (-40:0.3)arc(-40:140:0.3) (140:0.3)arc(140:320:0.26);
\end{tikzpicture}
 &%%%%%4
\begin{tikzpicture}[baseline=0mm,scale=0.7]
\coordinate(ru)at(30:1); \coordinate(u)at(90:1); \coordinate(lu)at(150:1);
\coordinate(rd)at(-30:1); \coordinate(d)at(-90:1); \coordinate(ld)at(-150:1);
\draw(0,0)circle(10mm); \draw[dotted](u)--(rd)--(ld)--(u);
\fill(u)circle(1.4mm); \fill(ld)circle(1.4mm); \fill(rd)circle(1.4mm);
\draw[fill=white](ru)circle(1.4mm); \draw[fill=white](lu)circle(1.4mm); 
\draw[fill=white](d)circle(1.4mm); \draw[fill=white](0,0)circle(1.4mm);
\draw[blue](290:1)--(210:0.3) (210:0.3)arc(210:30:0.3) (30:0.3)arc(30:-150:0.26);
\end{tikzpicture}
\\
$(1,0,0)$&$(-1,0,0)$&$(0,1,0)$&$(0,-1,0)$
\\
 %%%%%5
\begin{tikzpicture}[baseline=0mm,scale=0.7]
\coordinate(ru)at(30:1); \coordinate(u)at(90:1); \coordinate(lu)at(150:1);
\coordinate(rd)at(-30:1); \coordinate(d)at(-90:1); \coordinate(ld)at(-150:1);
\draw(0,0)circle(10mm); \draw[dotted](u)--(rd)--(ld)--(u);
\fill(u)circle(1.4mm); \fill(ld)circle(1.4mm); \fill(rd)circle(1.4mm);
\draw[fill=white](ru)circle(1.4mm); \draw[fill=white](lu)circle(1.4mm); 
\draw[fill=white](d)circle(1.4mm); \draw[fill=white](0,0)circle(1.4mm);
\draw[blue](10:1)--(80:0.3) (80:0.3)arc(80:260:0.3) (260:0.3)arc(260:440:0.26);
\end{tikzpicture}
 &%%%%%6
\begin{tikzpicture}[baseline=0mm,scale=0.7]
\coordinate(ru)at(30:1); \coordinate(u)at(90:1); \coordinate(lu)at(150:1);
\coordinate(rd)at(-30:1); \coordinate(d)at(-90:1); \coordinate(ld)at(-150:1);
\draw(0,0)circle(10mm); \draw[dotted](u)--(rd)--(ld)--(u);
\fill(u)circle(1.4mm); \fill(ld)circle(1.4mm); \fill(rd)circle(1.4mm);
\draw[fill=white](ru)circle(1.4mm); \draw[fill=white](lu)circle(1.4mm); 
\draw[fill=white](d)circle(1.4mm); \draw[fill=white](0,0)circle(1.4mm);
\draw[blue](50:1)--(-30:0.3) (-30:0.3)arc(330:150:0.3) (150:0.3)arc(150:-30:0.26);
\end{tikzpicture}
 &%%%%%7
\begin{tikzpicture}[baseline=0mm,scale=0.7]
\coordinate(ru)at(30:1); \coordinate(u)at(90:1); \coordinate(lu)at(150:1);
\coordinate(rd)at(-30:1); \coordinate(d)at(-90:1); \coordinate(ld)at(-150:1);
\draw(0,0)circle(10mm); \draw[dotted](u)--(rd)--(ld)--(u);
\fill(u)circle(1.4mm); \fill(ld)circle(1.4mm); \fill(rd)circle(1.4mm);
\draw[fill=white](ru)circle(1.4mm); \draw[fill=white](lu)circle(1.4mm); 
\draw[fill=white](d)circle(1.4mm); \draw[fill=white](0,0)circle(1.4mm);
\draw[blue](130:1)..controls(-150:0.3)..(-70:1);
\end{tikzpicture}
 &%%%%%8
\begin{tikzpicture}[baseline=0mm,scale=0.7]
\coordinate(ru)at(30:1); \coordinate(u)at(90:1); \coordinate(lu)at(150:1);
\coordinate(rd)at(-30:1); \coordinate(d)at(-90:1); \coordinate(ld)at(-150:1);
\draw(0,0)circle(10mm); \draw[dotted](u)--(rd)--(ld)--(u);
\fill(u)circle(1.4mm); \fill(ld)circle(1.4mm); \fill(rd)circle(1.4mm);
\draw[fill=white](ru)circle(1.4mm); \draw[fill=white](lu)circle(1.4mm); 
\draw[fill=white](d)circle(1.4mm); \draw[fill=white](0,0)circle(1.4mm);
\draw[blue](170:1)..controls(50:0.7)and(10:0.7)..(-110:1);
\end{tikzpicture}
\\
$(0,0,1)$&$(0,0,-1)$&$(1,-1,0)$&$(-1,1,0)$
\\
 %%%%%9
\begin{tikzpicture}[baseline=0mm,scale=0.7]
\coordinate(ru)at(30:1); \coordinate(u)at(90:1); \coordinate(lu)at(150:1);
\coordinate(rd)at(-30:1); \coordinate(d)at(-90:1); \coordinate(ld)at(-150:1);
\draw(0,0)circle(10mm); \draw[dotted](u)--(rd)--(ld)--(u);
\fill(u)circle(1.4mm); \fill(ld)circle(1.4mm); \fill(rd)circle(1.4mm);
\draw[fill=white](ru)circle(1.4mm); \draw[fill=white](lu)circle(1.4mm); 
\draw[fill=white](d)circle(1.4mm); \draw[fill=white](0,0)circle(1.4mm);
\draw[blue](-110:1)..controls(-30:0.3)..(50:1);
\end{tikzpicture}
 &%%%%%10
\begin{tikzpicture}[baseline=0mm,scale=0.7]
\coordinate(ru)at(30:1); \coordinate(u)at(90:1); \coordinate(lu)at(150:1);
\coordinate(rd)at(-30:1); \coordinate(d)at(-90:1); \coordinate(ld)at(-150:1);
\draw(0,0)circle(10mm); \draw[dotted](u)--(rd)--(ld)--(u);
\fill(u)circle(1.4mm); \fill(ld)circle(1.4mm); \fill(rd)circle(1.4mm);
\draw[fill=white](ru)circle(1.4mm); \draw[fill=white](lu)circle(1.4mm); 
\draw[fill=white](d)circle(1.4mm); \draw[fill=white](0,0)circle(1.4mm);
\draw[blue](-70:1)..controls(170:0.7)and(130:0.7)..(10:1);
\end{tikzpicture}
 &%%%%%11
\begin{tikzpicture}[baseline=0mm,scale=0.7]
\coordinate(ru)at(30:1); \coordinate(u)at(90:1); \coordinate(lu)at(150:1);
\coordinate(rd)at(-30:1); \coordinate(d)at(-90:1); \coordinate(ld)at(-150:1);
\draw(0,0)circle(10mm); \draw[dotted](u)--(rd)--(ld)--(u);
\fill(u)circle(1.4mm); \fill(ld)circle(1.4mm); \fill(rd)circle(1.4mm);
\draw[fill=white](ru)circle(1.4mm); \draw[fill=white](lu)circle(1.4mm); 
\draw[fill=white](d)circle(1.4mm); \draw[fill=white](0,0)circle(1.4mm);
\draw[blue](50:1)..controls(-70:0.7)and(-110:0.7)..(130:1);
\end{tikzpicture}
 &%%%%%12
\begin{tikzpicture}[baseline=0mm,scale=0.7]
\coordinate(ru)at(30:1); \coordinate(u)at(90:1); \coordinate(lu)at(150:1);
\coordinate(rd)at(-30:1); \coordinate(d)at(-90:1); \coordinate(ld)at(-150:1);
\draw(0,0)circle(10mm); \draw[dotted](u)--(rd)--(ld)--(u);
\fill(u)circle(1.4mm); \fill(ld)circle(1.4mm); \fill(rd)circle(1.4mm);
\draw[fill=white](ru)circle(1.4mm); \draw[fill=white](lu)circle(1.4mm); 
\draw[fill=white](d)circle(1.4mm); \draw[fill=white](0,0)circle(1.4mm);
\draw[blue](10:1)..controls(90:0.3)..(170:1);
\end{tikzpicture}
\\
$(0,1,-1)$&$(0,-1,1)$&$(1,0,-1)$&$(-1,0,1)$
\end{tabular}
\]
There are 20 complete $D$-laminations. The fan $\mathcal{F}(D)$ is given as in the center diagram of Figure \ref{Fig g-cones}.

The above construction is naturally generalized. For a positive integer $n$, we define a $\circ\bullet$-marked surface $(S_2,M_2):=(S_2(n),M_2(n))$ as follows: 
Let $(S_2,M_2)$ be a disk with $\vert M_2\vert =2n+1$ such that one marked point in $(M_2)_{\circ}$ is a puncture and the others lie on $\partial S_2$. Let $D_2$ be a $\bullet$-dissection of $(S_2,M_2)$ as in the following diagram.
\[
\begin{tikzpicture}[baseline=0mm]
\coordinate(0)at(90:1); \coordinate(1)at(120:1); \coordinate(2)at(150:1); \coordinate(3)at(180:1);
\coordinate(4)at(210:1); \coordinate(5)at(240:1); \coordinate(6)at(270:1); \coordinate(7)at(300:1);
\coordinate(n)at(60:1); \coordinate(8)at(-30:1);
\draw(0,0)circle(10mm);
\draw(1)..controls(150:0.7)..node[fill=white,inner sep=1.5]{$1$}(3)..controls(210:0.7)..node[fill=white,inner sep=1.5]{$2$}(5)..controls(270:0.7)..node[fill=white,inner sep=1.5]{$3$}(7)--(-40:0.8) (1)..controls(90:0.7)..node[fill=white,inner sep=1.5]{$n$}(n)--(40:0.8);
\fill(1)circle(1mm); \fill(3)circle(1mm); \fill(5)circle(1mm); \fill(7)circle(1mm); \fill(n)circle(1mm);
\draw[fill=white](0,0)circle(1mm); \draw[fill=white](0)circle(1mm); \draw[fill=white](2)circle(1mm); 
\draw[fill=white](4)circle(1mm); \draw[fill=white](6)circle(1mm); \draw[fill=white](8)circle(1mm);
\node at(10:0.65){$\vdots$};
\end{tikzpicture}
\]
In the same way as above, it is not difficult to see that there are precisely $n(n+1)$ $D_2$-laminates each of which is determined by its $g$-vector. More precisely, the map $\gamma\mapsto g(\gamma)$ gives a bijection from the set of $g$-vectors of all $D_2$-laminates 
to the set
\begin{equation} \label{ptriangulation}
    \{\pm (e_i - e_j) \mid 1\leq i < j \leq n\} \cup \{\pm e_i\mid 1\leq i \leq n\}. 
\end{equation}

%%%
(3) Consider a torus $S=T^2$ with $\partial S=\emptyset$ and $\vert M\vert =2$ (i.e., both marked points are punctures). 
Let $D$ be a $\bullet$-dissection of $(S,M)$ given by 
\[
\begin{tikzpicture}[baseline=-1mm,scale=0.7]
\coordinate(b)at(-0.5,0.5); \coordinate(w)at(0.5,-0.5);
\draw (-1.5,-1.5) rectangle (1.5,1.5);
\draw[fill=black](b)circle(1.4mm);
\draw[fill=white](w)circle(1.4mm);
\draw(-1.5,0.5)--node[fill=white,inner sep=1]{$1$}(b)--node(1)[fill=white,inner sep=1]{$1$}(1.5,0.5); 
\draw(-0.5,-1.5)--node(1)[fill=white,inner sep=1]{$2$}(b)--node(1)[fill=white,inner sep=1]{$2$}(-0.5,1.5); 
\node(ll) at(-1.5,0) {\rotatebox{90}{$>>$}};
\node(rr) at(1.5,0) {\rotatebox{90}{$>>$}};
\node(uu) at(0,1.5) {\rotatebox{0}{$>$}};
\node(dd) at(0,-1.5) {\rotatebox{0}{$>$}};
\end{tikzpicture}\ ,
\]
where we identify the opposite sides of the square in the same direction.
%\underline{then our surface $S$ consists of one polygon $\triangle_{\circ}$ with boundary sides $1,2,1,2$ in clockwise order} \fbox{insert as a figure?}. 
All $D$-laminates and the corresponding $g$-vectors are given as follows:
\[ 
\renewcommand{\arraystretch}{1.5}
\setlength{\tabcolsep}{1mm}
\begin{tabular}{cccccccc}
   $\ell$ && $\gamma_{-2}$ & $\gamma_{-1}$ & $\gamma_0$ & $\gamma_{1}$ & $\gamma_{2}$\\
   \begin{tikzpicture}[baseline=0mm,scale=0.5]
      \coordinate(b)at(-0.7,0.7); \coordinate(w)at(0.5,-0.5);
      \draw (-1.5,-1.5) rectangle (1.5,1.5);
      \draw[fill=black](b)circle(1.5mm);
      \draw[fill=white](w)circle(1.5mm);
      \draw[dotted](-1.5,0.7)--(1.5,0.7); 
      \draw[dotted](-0.7,-1.5)--(-0.7,1.5); 
      %%%%%%%
      \draw[blue](-1.5,-0.9)--(0.9,1.5);
      \draw[blue](0.9,-1.5)--(1.5,-0.9);
   \end{tikzpicture}
   &%%%%%
   \hspace{5mm}$\cdots$
   &%%%%%2
   \begin{tikzpicture}[baseline=0mm,scale=0.5]
      \coordinate(b)at(-0.7,0.7); \coordinate(w)at(0.5,-0.5);
      \draw (-1.5,-1.5) rectangle (1.5,1.5);
      \draw[fill=black](b)circle(1.5mm);
      \draw[fill=white](w)circle(1.5mm);
      \draw[dotted](-1.5,0.7)--(1.5,0.7); 
      \draw[dotted](-0.7,-1.5)--(-0.7,1.5); 
      %%%%%%%
      \draw[blue](0.1,1.5)--(1.5,0.0);
      \draw[blue](-0.3,1.5)--(1.5,-0.35);
      \draw[blue](1.5,-0.7)..controls(1.4,-0.6)and(0.8,-0.15)..(0.5,-0.15)arc(90:280:0.3);
      \draw[blue](-1.5,0.0)..controls(-1,-0.5)and(0,-0.85)..(0.5,-0.85)arc(-90:100:0.3);
      \draw[blue](-1.5,-0.35)--(0.1,-1.5);
      \draw[blue](-1.5,-0.7)--(-0.3,-1.5);
   \end{tikzpicture}
   &
   \begin{tikzpicture}[baseline=0mm,scale=0.5]
      \coordinate(b)at(-0.7,0.7); \coordinate(w)at(0.5,-0.5);
      \draw (-1.5,-1.5) rectangle (1.5,1.5);
      \draw[fill=black](b)circle(1.5mm);
      \draw[fill=white](w)circle(1.5mm);
      \draw[dotted](-1.5,0.7)--(1.5,0.7); 
      \draw[dotted](-0.7,-1.5)--(-0.7,1.5); 
      %%%%%%%
      %\draw[blue](0.1,1.5)--(1.5,0.0);
      \draw[blue](-0.2,1.5)--(1.5,0);
      \draw[blue](1.5,-0.7)..controls(1.4,-0.6)and(0.8,-0.15)..(0.5,-0.15)arc(90:280:0.3);
      \draw[blue](-1.5,0.0)..controls(-1,-0.5)and(0,-0.85)..(0.5,-0.85)arc(-90:100:0.3);
      \draw[blue](-1.5,-0.7)--(-0.2,-1.5);
      %\draw[blue](-1.5,-0.7)--(-0.3,-1.5);
   \end{tikzpicture}
   &        
   \begin{tikzpicture}[baseline=0mm,scale=0.5]
      \coordinate(b)at(-0.5,0.5); \coordinate(w)at(0.5,-0.5);
      \draw (-1.5,-1.5) rectangle (1.5,1.5);
      \draw[fill=black](b)circle(1.5mm);
      \draw[fill=white](w)circle(1.5mm);
      \draw[dotted](-1.5,0.5)--(1.5,0.5); 
      \draw[dotted](-0.5,-1.5)--(-0.5,1.5); 
      \draw[blue](1.5,-0.5)..controls(1.4,-0.4)and(0.8,-0.15)..(0.5,-0.15)arc(90:280:0.3);
      \draw[blue](-1.5,-0.5)..controls(-1.3,-0.6)and(0,-0.85)..(0.5,-0.85)arc(-90:100:0.3);
   \end{tikzpicture}
   &
   \begin{tikzpicture}[baseline=0mm,scale=0.5]
      \coordinate(b)at(-0.5,0.5); \coordinate(w)at(0.5,-0.5);
      \draw (-1.5,-1.5) rectangle (1.5,1.5);
      \draw[fill=black](b)circle(1.5mm);
      \draw[fill=white](w)circle(1.5mm);
      \draw[dotted](-1.5,0.5)--(1.5,0.5); 
      \draw[dotted](-0.5,-1.5)--(-0.5,1.5); 
      \draw[blue](0.5,1.5)..controls(0.4,1)and(0.15,0)..(0.15,-0.5)arc(-180:10:0.3);
      \draw[blue](0.5, -1.5)..controls(0.7,-1)and(0.85,-0.7)..(0.85,-0.5)arc(0:190:0.3);
   \end{tikzpicture}
   &
   \begin{tikzpicture}[baseline=0mm,scale=0.5]
      \coordinate(b)at(-0.5,0.5); \coordinate(w)at(0.5,-0.5);
      \draw (-1.5,-1.5) rectangle (1.5,1.5);
      \draw[fill=black](b)circle(1.5mm);
      \draw[fill=white](w)circle(1.5mm);
      \draw[dotted](-1.5,0.5)--(1.5,0.5); 
      \draw[dotted](-0.5,-1.5)--(-0.5,1.5); 
      \draw[blue](0.5,1.5)..controls(0,0.5)and(-1,-0.6)..(-1.5,-0.9);
      \draw[blue](1.5,-0.9)--(0.7,-1.5);
      \draw[blue](0.7,1.5)..controls(0.45,1)and(0.15,0)..(0.15,-0.5)arc(-180:10:0.3);
      \draw[blue](0.5, -1.5)..controls(0.7,-1)and(0.85,-0.7)..(0.85,-0.5)arc(0:190:0.3);
   \end{tikzpicture} 
   & $\cdots$\\
   $(1,-1)$ && $(-2,3)$ & $(-1,2)$ & $(0,1)$ & $(1,0)$ & $(2,-1)$\\
   $\ell'$ && $\gamma'_{-2}$ & $\gamma'_{-1}$ & $\gamma'_0$ & $\gamma'_{1}$ & $\gamma'_{2}$\\ 
      \begin{tikzpicture}[baseline=0mm,scale=0.5]
         \coordinate(b)at(-0.5,0.5); \coordinate(w)at(0.5,-0.5);
         \draw (-1.5,-1.5) rectangle (1.5,1.5);
         \draw[fill=black](b)circle(1.5mm);
         \draw[fill=white](w)circle(1.5mm);
         \draw[dotted](-1.5,0.5)--(1.5,0.5); 
         \draw[dotted](-0.5,-1.5)--(-0.5,1.5); 
         \draw[blue](-1.5,0)--(0,-1.5);
         \draw[blue](0,1.5)--(1.5,0);
      \end{tikzpicture}
      &%%%%%
      \hspace{5mm}$\cdots$
      &%%%%%2
      \begin{tikzpicture}[baseline=0mm,scale=0.5]
         \coordinate(b)at(-0.5,0.5); \coordinate(w)at(0.5,-0.5);
         \draw (-1.5,-1.5) rectangle (1.5,1.5);
         \draw[fill=black](b)circle(1.5mm);
         \draw[fill=white](w)circle(1.5mm);
         \draw[dotted](-1.5,0.5)--(1.5,0.5); 
         \draw[dotted](-0.5,-1.5)--(-0.5,1.5); 
         \draw[blue](-1.5,0.2)--(0.3,-1.5);
         \draw[blue](0.5,1.5)--(1.5,0.2);
         \draw[blue](0.3,1.5)..controls(0.55,1)and(0.85,0)..(0.85,-0.5)arc(0:-190:0.3);
         \draw[blue](0.5, -1.5)..controls(0.3,-1)and(0.15,-0.7)..(0.15,-0.5)arc(180:-10:0.3);
      \end{tikzpicture}
      &
      \begin{tikzpicture}[baseline=0mm,scale=0.5]
         \coordinate(b)at(-0.5,0.5); \coordinate(w)at(0.5,-0.5);
         \draw (-1.5,-1.5) rectangle (1.5,1.5);
         \draw[fill=black](b)circle(1.5mm);
         \draw[fill=white](w)circle(1.5mm);
         \draw[dotted](-1.5,0.5)--(1.5,0.5); 
         \draw[dotted](-0.5,-1.5)--(-0.5,1.5); 
         \draw[blue](0.5,1.5)..controls(0.7,1)and(0.85,0)..(0.85,-0.5)arc(0:-190:0.3);
         \draw[blue](0.5, -1.5)..controls(0.3,-1)and(0.15,-0.7)..(0.15,-0.5)arc(180:-10:0.3);
      \end{tikzpicture}
      &
      \begin{tikzpicture}[baseline=0mm,scale=0.5]
         \coordinate(b)at(-0.5,0.5); \coordinate(w)at(0.5,-0.5);
         \draw (-1.5,-1.5) rectangle (1.5,1.5);
         \draw[fill=black](b)circle(1.5mm);
         \draw[fill=white](w)circle(1.5mm);
         \draw[dotted](-1.5,0.5)--(1.5,0.5); 
         \draw[dotted](-0.5,-1.5)--(-0.5,1.5); 
         \draw[blue](1.5,-0.5)..controls(1.4,-0.6)and(0.8,-0.85)..(0.5,-0.85)arc(270:80:0.3);
         \draw[blue](-1.5,-0.5)..controls(-1.3,-0.4)and(0,-0.15)..(0.5,-0.15)arc(90:-100:0.3);
      \end{tikzpicture}
      &
      \begin{tikzpicture}[baseline=0mm,scale=0.5]
         \coordinate(b)at(-0.5,0.5); \coordinate(w)at(0.5,-0.5);
         \draw (-1.5,-1.5) rectangle (1.5,1.5);
         \draw[fill=black](b)circle(1.5mm);
         \draw[fill=white](w)circle(1.5mm);
         \draw[dotted](-1.5,0.5)--(1.5,0.5); 
         \draw[dotted](-0.5,-1.5)--(-0.5,1.5); 
         \draw[blue](0.7,1.5)..controls(0.3,0.5)and(-1,-0.35)..(-1.5,-0.5);
         \draw[blue](1.5,-0.7)--(0.7,-1.5);
         \draw[blue](1.5,-0.5)..controls(1.4,-0.6)and(0.8,-0.85)..(0.5,-0.85)arc(270:80:0.3);
         \draw[blue](-1.5,-0.7)..controls(-1.3,-0.6)and(0,-0.15)..(0.5,-0.15)arc(90:-100:0.3);
      \end{tikzpicture}
      &
      \begin{tikzpicture}[baseline=0mm,scale=0.5]
         \coordinate(b)at(-0.5,0.5); \coordinate(w)at(0.5,-0.5);
         \draw (-1.5,-1.5) rectangle (1.5,1.5);
         \draw[fill=black](b)circle(1.5mm);
         \draw[fill=white](w)circle(1.5mm);
         \draw[dotted](-1.5,0.5)--(1.5,0.5); 
         \draw[dotted](-0.5,-1.5)--(-0.5,1.5); 
         \draw[blue](0.5,1.5)..controls(0.3,0.5)and(-1,-0.35)..(-1.5,-0.5);
         \draw[blue](0.8,1.5)..controls(0.6,0.5)and(-0.8,-0.5)..(-1.5,-0.75);
         \draw[blue](-1.5,-1)..controls(-1.3,-0.9)and(0,-0.15)..(0.5,-0.15)arc(90:-100:0.3);
         \draw[blue](1.5,-0.75)--(0.5,-1.5);
         \draw[blue](1.5,-1)--(0.8,-1.5);
         \draw[blue](1.5,-0.5)..controls(1.4,-0.6)and(0.8,-0.85)..(0.5,-0.85)arc(270:80:0.3);
      \end{tikzpicture}
      & $\cdots$\\
      $(-1,1)$ && $(-2,1)$ & $(-1,0)$ & $(0,-1)$ & $(1,-2)$ &  $(2,-3)$
\end{tabular}
\]
where $\ell, \ell'$ are closed $D$-laminates and $\gamma_m, \gamma'_m$ are non-closed $D$-laminates for all $m\in \mathbb{Z}$. 
We find that the set $\{\{\gamma_m, \gamma_{m+1}\}, \{\gamma'_m, \gamma'_{m+1}\} \mid m \in \mathbb{Z}\}$ provides all complete $D$-laminations. 
Then $\mathcal{F}(D)$ is a non-singular fan given as in the right diagram of Figure \ref{Fig g-cones}. 

For the closed $D$-laminate $\ell$, its $g$-vector $g(\ell)=(1,-1)\in \mathbb{Z}^2$ does not contained in $\vert \mathcal{F}(D)\vert $. It will be approximated by using the Dehn twist $\mathsf{T}_{\ell}$ along $\ell$ (we refer to Section \ref{SecDehn} for the details). 
In fact, we have $\mathsf{T}_{\ell}(\gamma_i)=\gamma_{i+1}$ for any $i\in \mathbb{Z}_{>0}$ and hence
\[
g(\ell)=(1,-1)\in\overline{\bigcup_{m \ge 0} C(\mathsf{T}_{\ell}^m(\{\gamma_1\}))}. 
%= \overline{\bigcup_{m \ge 0} C(\{\ell_i\})} \subseteq \vert \mathcal{F}(D)\vert .
\] 

%%%%%%%%%% Figure
\begin{figure}[htp]
\[
\renewcommand{\arraystretch}{1.5}
\setlength{\tabcolsep}{3mm}
\begin{tabular}{ccc}
\begin{tikzpicture}[baseline=0mm,scale=0.7]
\coordinate(0)at(0,0); \coordinate(x)at(-0.7,-1.1); \coordinate(-x)at(0.7,1.1);
\coordinate(y)at(0:2); \coordinate(-y)at(180:2); \coordinate(z)at(90:2); \coordinate(-z)at(-90:2);
\coordinate(x-y)at(-2.7,-1.1); \coordinate(x-z)at(-0.7,-3.1); \coordinate(y-z)at(2,-2);
\draw[->,dashed](-x)--(x); \draw[->,dashed](-y)--(y); \draw[->,dashed](-z)--(z);
\draw[->,dashed](0)--(x-y); \draw[->,dashed](0)--(x-z); \draw[->,dashed](0)--(y-z);
\draw(x)--(y)--(z)--(-y)--(x-y)--(z)--(x)--(x-y)--(x-z)--(x)--(y-z)--(x-z) (y-z)--(y);
\draw[dotted](z)--(-x)--(y) (-z)--(-x)--(-y)--(-z)--(x-z) (x-y)--(-z)--(y-z)--(-x);
\end{tikzpicture}
 &%%%%%
\begin{tikzpicture}[baseline=0mm,scale=0.7]
\coordinate(0)at(0,0); \coordinate(x)at(-0.7,-1.1); \coordinate(-x)at(0.7,1.1);
\coordinate(y)at(0:2); \coordinate(-y)at(180:2); \coordinate(z)at(90:2); \coordinate(-z)at(-90:2);
\coordinate(x-y)at(-2.7,-1.1); \coordinate(x-z)at(-0.7,-3.1); \coordinate(y-z)at(2,-2);
\coordinate(-xy)at(2.7,1.1); \coordinate(x-z)at(-0.7,-3.1); \coordinate(y-z)at(2,-2);
\coordinate(-xz)at(0.7,3.1); \coordinate(-yz)at(-2,2);
\draw[->,dashed](-x)--(x); \draw[->,dashed](-y)--(y); \draw[->,dashed](-z)--(z);
\draw[->,dashed](0)--(x-y); \draw[->,dashed](0)--(x-z); \draw[->,dashed](0)--(y-z);
\draw[->,dashed](0)--(-xy); \draw[->,dashed](0)--(x-z); \draw[->,dashed](0)--(y-z);
\draw[->,dashed](0)--(-xz); \draw[->,dashed](0)--(-yz);
\draw(x)--(y)--(z)--(-yz)--(x-y)--(z)--(x)--(x-y)--(x-z)--(x)--(y-z)--(x-z) (y)--(y-z)--(-xy)--(-xz)--(y)--(-xy) (z)--(-xz)--(-yz);
\draw[dotted](-z)--(-x)--(-y)--(-xz)--(-x)--(-xy) (-yz)--(-y)--(x-y)--(-z)--(x-z) (-y)--(-z)--(y-z)--(-x);
\end{tikzpicture}
 &%%%%%
\begin{tikzpicture}[baseline=0mm,scale=0.7]
\coordinate(0)at(0,0); \coordinate(p)at(2,-2); \coordinate(-p)at(-2,2);
\coordinate(x)at(0:2); \coordinate(-x)at(180:2); \coordinate(y)at(90:2); \coordinate(-y)at(-90:2);
\draw[->](-x)--(x); \draw[->](-y)--(y);
\draw[dashed](2,-2)--(-2,2);
\draw($(-x)!0.5!(-p)$)--(0)--($(-x)!0.67!(-p)$) ($(-x)!0.75!(-p)$)--(0)--($(-x)!0.8!(-p)$) (0)--($(y)!0.833!(-p)$) ($(-x)!0.86!(-p)$)--(0)--($(-x)!0.875!(-p)$) ($(-x)!0.89!(-p)$)--(0)--($(-x)!0.905!(-p)$) (0)--($(-x)!0.92!(-p)$) ($(-x)!0.93!(-p)$)--(0)--($(-x)!0.94!(-p)$) ($(-x)!0.95!(-p)$)--(0)--($(-x)!0.96!(-p)$);
\draw($(y)!0.5!(-p)$)--(0)--($(y)!0.67!(-p)$) ($(y)!0.75!(-p)$)--(0)--($(y)!0.8!(-p)$) (0)--($(y)!0.833!(-p)$) ($(y)!0.86!(-p)$)--(0)--($(y)!0.875!(-p)$) ($(y)!0.89!(-p)$)--(0)--($(y)!0.905!(-p)$) (0)--($(y)!0.92!(-p)$) ($(y)!0.93!(-p)$)--(0)--($(y)!0.94!(-p)$) ($(y)!0.95!(-p)$)--(0)--($(y)!0.96!(-p)$);
\draw ($(x)!0.5!(p)$)--(0)--($(x)!0.67!(p)$) ($(x)!0.75!(p)$)--(0)--($(x)!0.8!(p)$) (0)--($(-y)!0.833!(p)$) ($(x)!0.86!(p)$)--(0)--($(x)!0.875!(p)$) ($(x)!0.89!(p)$)--(0)--($(x)!0.905!(p)$) (0)--($(x)!0.92!(p)$) ($(x)!0.93!(p)$)--(0)--($(x)!0.94!(p)$) ($(x)!0.95!(p)$)--(0)--($(x)!0.96!(p)$);
\draw($(-y)!0.5!(p)$)--(0)--($(-y)!0.67!(p)$) ($(-y)!0.75!(p)$)--(0)--($(-y)!0.8!(p)$) (0)--($(-y)!0.833!(p)$) ($(-y)!0.86!(p)$)--(0)--($(-y)!0.875!(p)$) ($(-y)!0.89!(p)$)--(0)--($(-y)!0.905!(p)$) (0)--($(-y)!0.92!(p)$) ($(-y)!0.93!(p)$)--(0)--($(-y)!0.94!(p)$) ($(-y)!0.95!(p)$)--(0)--($(-y)!0.96!(p)$);
\end{tikzpicture}\\
(1)&(2)&(3)
\end{tabular}
\]
   \caption{A fan $\mathcal{F}(D)$ of $g$-vector cones for Examples (1)-(3)}
   \label{Fig g-cones}
\end{figure}
%%%%%%%%%% Figure

%%%%%
%%%%%
\section{$g$-vectors and lattice points}\label{Secg}

The aim of this section is to prove the following result.

\begin{theorem} \label{lattice points}
Let $D$ be a $\bullet$-dissection of $(S,M)$.
Then there is a bijection
$$
\{\text{$D$-laminations}\} \rightarrow \mathbb{Z}^{\vert D\vert }
$$
given by the map $\mathcal{X} \mapsto g(\mathcal{X}):=\sum_{\gamma\in \mathcal{X}}g(\gamma)$, where $g(\emptyset):=0$.
\end{theorem}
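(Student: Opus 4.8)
The plan is to exhibit an explicit inverse and to verify both directions, exploiting that the assignment is additive. First I would record the elementary but crucial observation that, since $g(\gamma)_d$ counts signed intersections of $\gamma$ with the $\bullet$-arc $d$, the map is additive over multiset union: $g(\mathcal{X}\sqcup\mathcal{Y})=g(\mathcal{X})+g(\mathcal{Y})$ with $g(\emptyset)=0$. Thus $\mathcal{X}\mapsto g(\mathcal{X})$ is a homomorphism from the commutative monoid of $D$-laminations (under $\sqcup$) to $(\mathbb{Z}^{|D|},+)$, and it suffices to prove that it is bijective as a map of sets. This framing also isolates the two features that make a general lamination more than a reduced one, namely multiplicities and closed laminates, which I will handle separately.

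For surjectivity, given $z\in\mathbb{Z}^{|D|}$ I would reconstruct a realizing lamination by working in the polygon decomposition of $(S,M)$ furnished by $D$ (Proposition \ref{poly}), reading $z_d$ as the signed number of strands a lamination must send across $d$. I would first treat the case $z\in|\mathcal{F}(D)|$: here $z$ lies in a maximal cone $C(\mathcal{Y})$ for a complete $D$-lamination $\mathcal{Y}=\{\gamma_1,\dots,\gamma_{|D|}\}$ (Theorem \ref{g base}), and after establishing that the $g(\gamma_i)$ form a $\mathbb{Z}$-basis of $\mathbb{Z}^{|D|}$ one writes $z=\sum_i a_i\,g(\gamma_i)$ uniquely with $a_i\in\mathbb{Z}_{\ge0}$ and takes the lamination consisting of $a_i$ copies of $\gamma_i$. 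The remaining points $z\notin|\mathcal{F}(D)|$ must be realized using closed laminates; here I would invoke the local models of Section \ref{Secexample}, matching a neighbourhood of the relevant closed curve to the torus computation of Example (3), and the spiralling behaviour at punctures to Example (2), to produce a compatible multiset whose $g$-vectors account for the coordinates in these ``gap'' directions.

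For injectivity I would show that $z=g(\mathcal{X})$ determines $\mathcal{X}$. The key point is that the distinct non-closed laminates occurring in $\mathcal{X}$ are pairwise compatible, hence form a reduced lamination spanning a face of the simplicial fan $\mathcal{F}(D)$ (Theorem \ref{g base}); their multiplicities are then forced, by the $\mathbb{Z}$-basis coming from any complete lamination containing that face, from the portion of $z$ lying in $|\mathcal{F}(D)|$. The closed laminates of $\mathcal{X}$ together with their multiplicities are pinned down by the component of $z$ outside $|\mathcal{F}(D)|$, again through the model of Example (3): near a handle, distinct closed curves and their multiplicities are recoverable from the signed crossing data. Combining these local recoveries and checking that they glue consistently across the arcs of the dual dissection $D^{\ast}$ (Definition-Proposition \ref{dual}) recovers $\mathcal{X}$ uniquely.

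The main obstacle is the treatment of closed laminates, i.e. the lattice points outside $|\mathcal{F}(D)|$. The non-closed case is entirely controlled by the simplicial-fan structure of Theorem \ref{g base} together with unimodularity, and it is worth noting that surjectivity in effect forces the $\mathbb{Z}$-basis property, since a non-unimodular cone would leave interior lattice points unrealizable by any multiset supported on its generators. Closed laminates, by contrast, have $g$-vectors on the boundary of the support of the fan, so one must show both that every gap point is attained and that the realizing multiset of pairwise-compatible closed laminates is unique. Establishing the unimodularity of $\mathcal{F}(D)$ and carrying out the local-to-global gluing, so that independent local reconstructions assemble into a single globally embedded lamination, are the two points I expect to require the most care; the explicit computations of Section \ref{Secexample} are precisely the building blocks that make this local analysis tractable.
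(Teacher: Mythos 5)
Your proposal breaks down precisely at the point you yourself identify as the main obstacle: the lattice points not in $|\mathcal{F}(D)|$. First, your surjectivity/injectivity arguments both presuppose that a vector $z\in\mathbb{Z}^{|D|}$ can be split into ``the portion of $z$ lying in $|\mathcal{F}(D)|$'' and ``the component of $z$ outside $|\mathcal{F}(D)|$''; no such decomposition of a single vector exists. A lamination realizing $z$ is in general a mixture $\mathcal{X}=\mathcal{X}^{\rm nc}\sqcup\mathcal{X}^{\rm cl}$ of non-closed and closed laminates, $z=g(\mathcal{X}^{\rm nc})+g(\mathcal{X}^{\rm cl})$, and from $z$ alone you cannot read off either summand; so the closed part is not ``pinned down by the component of $z$ outside $|\mathcal{F}(D)|$'', and the dichotomy ``fan points via non-closed laminates, gap points via closed laminates'' is not even a correct description of the answer. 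Second, the proposed realization of gap points by ``matching a neighbourhood of the relevant closed curve to the torus computation of Example (3)'' has no precise content: a closed $D$-laminate in a general $(S,M)$ has an annular neighbourhood, not a torus, its $g$-vector is determined by how it crosses all the polygons it meets, and nothing in Section \ref{Secexample} (which is a single explicit example, not a lemma) lets you transfer that computation. Third, the unimodularity you need (that the $g$-vectors of a complete $D$-lamination form a $\mathbb{Z}$-basis) is nowhere established, and your remark that ``surjectivity in effect forces the $\mathbb{Z}$-basis property'' is circular: surjectivity is what you are trying to prove, and a lattice point in a non-unimodular cone could a priori be realized by a lamination not supported on that cone's generators.

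The missing idea, which is how the paper proceeds in Section \ref{Secg}, is to localize to the polygons of $D$ rather than analyze the global fan. For each $v\in M_{\circ}$ the polygon $\triangle_v$ embeds into one of the two special disk dissections (a), (b) of Figure \ref{Fig fin cases}; for those, $\mathcal{F}(D_i)$ is finite (Proposition \ref{Di fan}), hence complete by Theorem \ref{comp simp fan}, and the bijection holds (Corollary \ref{for stars}). Given $g\in\mathbb{Z}^{|D|}$, one takes, for each $v$, the unique local lamination $\mathcal{X}_v$ realizing $(g_d)_{d\in D\cap\triangle_v}$, and glues the curve segments $\mathcal{X}_v\cap\triangle_v$ across the arcs of $D$ (each arc $d$ carries $|g_d|$ endpoints on both sides, with matching signs, so the gluing is forced). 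Closed laminates then arise automatically from the gluing, with no case distinction between fan points and gap points, and uniqueness of $\mathcal{X}$ follows from uniqueness of each restriction $\mathcal{X}_v$. This local-to-global construction is what replaces, and makes rigorous, everything your proposal delegates to the inapplicable ``local models'' and to the ill-defined splitting of $z$.
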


Let $n$ be a positive integer. 
To prove Theorem \ref{lattice points}, we first consider triples \begin{equation}
(S_i,M_i,D_i):=(S_i(n),M_i(n),D_i(n))    
\end{equation} 
for $i=1,2$ defined in Section \ref{Secexample}(1) and (2) respectively. By definition, we have $\vert D_i\vert =n$ for $i=1,2$. 

\begin{proposition}\label{Di fan}
Under the above notation, for $i \in\{1,2\}$, $\mathcal{F}(D_i)$ is a non-singular fan which is finite. In particular, we have $\vert \mathcal{F}(D_i)\vert =\mathbb{R}^{n}$. 
\end{proposition}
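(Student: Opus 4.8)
The plan is to deduce both assertions from the two structural results already available. For the ``in particular'' part, note that once we know $\mathcal{F}(D_i)$ is finite, Theorem \ref{comp simp fan} immediately gives that all $D_i$-laminates are non-closed and that $|\mathcal{F}(D_i)|=\mathbb{R}^{|D_i|}$. Thus the whole statement reduces to proving that $\mathcal{F}(D_i)$ is finite. By Theorem \ref{g base}(1) the cones of $\mathcal{F}(D_i)$ are precisely the $C(\mathcal{X})$ for reduced $D_i$-laminations $\mathcal{X}$, and a reduced $D_i$-lamination is a set of pairwise distinct non-closed $D_i$-laminates; hence it suffices to show that there are only finitely many $D_i$-laminates.

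First I would rule out closed laminates. In case (a) the surface $S_1$ is an unpunctured disk, so every simple closed curve is contractible and can be isotoped off $D_1$ entirely; as a $D$-laminate must intersect at least one $\bullet$-arc, there are no closed $D_1$-laminates. In case (b) the surface $S_2$ is a once-punctured disk, whose only nontrivial free homotopy class of simple closed curves is the loop around the puncture $w\in (M_2)_{\circ}$; since $w$ is the unique marked point of its polygon $\triangle_w$ (Proposition \ref{poly}), this loop is isotopic to a small circle inside $\triangle_w$ and hence crosses no $\bullet$-arc, so again there are no closed $D_2$-laminates.

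It then remains to bound the non-closed laminates, which I would do by putting a laminate $\gamma$ in minimal position with respect to $D_i$ and counting crossings. In case (a), any bigon cut out by $\gamma$ and a $\bullet$-arc bounds a disk and can be removed, so in minimal position $\gamma$ meets each $d\in D_1$ at most once; then $\gamma$ is determined up to isotopy by the subset of arcs it crosses together with the (planarity-forced) order of crossings, giving only finitely many possibilities. In case (b) the same bigon argument applies except for bigons enclosing the puncture $w$, which account for the spiralling ends; since an end spirals around $w$ only once, $\gamma$ meets each $d\in D_2$ at most twice and carries only the finite extra datum of the spiral direction at a puncture-end, so again there are finitely many laminates. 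The main obstacle is precisely this last point: controlling the crossing pattern of curves that spiral into the puncture in case (b), where bigons cannot be freely removed. Having established finiteness in both cases, we conclude that $\mathcal{F}(D_i)$ is finite and, by Theorem \ref{comp simp fan}, that $|\mathcal{F}(D_i)|=\mathbb{R}^{|D_i|}$. Alternatively, since $D_1$ and $D_2$ are given explicitly in Figure \ref{Fig fin cases}, one can list all $D_i$-laminates directly, extending the enumerations carried out for $n=3$ in Examples (1) and (2).
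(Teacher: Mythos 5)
Your reduction of the statement to finiteness of the set of $D_i$-laminates, your use of Theorem \ref{comp simp fan} for the ``in particular'' part, your elimination of closed laminates, and your bigon argument in case (a) are all sound; note that this is already a genuinely different route from the paper, whose entire proof is the direct enumeration you mention only as a fallback in your last sentence (the paper counts $\frac{1}{2}n(n+3)$ laminates for $D_1$ and $n(n+1)$ for $D_2$ by extending Examples (1) and (2) of Section \ref{Secexample}, and then applies Theorem \ref{comp simp fan} exactly as you do).

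The gap is in case (b), precisely where you flag ``the main obstacle''. The sentence ``since an end spirals around $w$ only once, $\gamma$ meets each $d\in D_2$ at most twice'' is not an argument, and its premise is wrong: a spiralling end winds around the puncture infinitely many times, but it does so entirely inside the central polygon $\triangle_w$ and therefore crosses no $\bullet$-arc at all, so spiralling ends are not what threatens your crossing bound. What actually defeats a naive bigon argument are the non-spiralling curves that enter $\triangle_w$, wrap once around $w$, and exit through the same arc $d$: the region such a curve cuts off together with $d$ contains the puncture, so it is a genuine non-removable configuration, and nothing in your proposal excludes it or bounds how such configurations could combine. Two repairs are available. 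Topologically: between two consecutive crossings of $\gamma$ with a fixed arc $d$, the intervening segment either cuts off a disk free of marked points (a removable bigon) or encloses $w$; since the segments of a simple curve are pairwise disjoint, at most one of them can enclose $w$, so in minimal position $\gamma$ meets each arc at most twice, and then finiteness follows because each complementary polygon is a disk with at most one marked point, so a curve is determined up to isotopy by its (bounded) crossing sequence, the side of $w$ chosen in each passage through $\triangle_w$, and the spiral directions. Alternatively, and this explains why the paper's exact counts are smaller than your bound suggests, one checks that the wrap-around curves violate condition $(\ast)$: at both of their crossings with $d$, the puncture $w$ and the boundary endpoint of $d^{\ast}$ lie on the same side of $\gamma$, so these curves are not $D_2$-laminates at all, and every genuine $D_2$-laminate meets each arc at most once. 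As written, however, the case (b) step is asserted rather than proved, so the proposal is incomplete at its critical point.
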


\begin{proof} 
It is already shown in Section \ref{Secexample}(1) (resp, (2)) that the number of $D_1$-laminates (resp., $D_2$-laminates) is equal to $\frac{1}{2}n(n+3)$ (resp., $n(n+1)$), in particular, it is finite. Then, the assertion follows from Theorem \ref{comp simp fan}.
\end{proof}

\begin{corollary} \label{for stars}
Theorem \ref{lattice points} holds for $D=D_1$ or $D=D_2$.
\end{corollary}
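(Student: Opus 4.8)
The plan is to read the bijection off the fan $\mathcal{F}(D_i)$, after isolating one integrality input. By Proposition \ref{Di fan} and Theorem \ref{comp simp fan}, $\mathcal{F}(D_i)$ is a finite simplicial fan with $|\mathcal{F}(D_i)|=\mathbb{R}^{n}$ and every $D_i$-laminate is non-closed. Consequently a $D_i$-lamination $\mathcal{X}$ is determined by its underlying set $\underline{\mathcal{X}}$ of pairwise distinct laminates, which is a reduced lamination, together with a multiplicity $m_{\gamma}\in\mathbb{Z}_{>0}$ for each $\gamma\in\underline{\mathcal{X}}$. Since $C(\underline{\mathcal{X}})\in\mathcal{F}(D_i)$ and the fan is simplicial, the vectors $g(\gamma)$ for $\gamma\in\underline{\mathcal{X}}$ are linearly independent and
\[
g(\mathcal{X})=\sum_{\gamma\in\underline{\mathcal{X}}}m_{\gamma}\,g(\gamma)
\]
lies in the relative interior of $C(\underline{\mathcal{X}})$. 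Thus the map amounts to choosing a cone of the fan and positive integer coordinates along its rays, and I would analyse it cone by cone.

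First I would prove injectivity and surjectivity under the assumption that the maximal cones are \emph{unimodular}: for every complete $D_i$-lamination $\{\gamma_1,\dots,\gamma_n\}$ the $g$-vectors $g(\gamma_1),\dots,g(\gamma_n)$ form a $\mathbb{Z}$-basis of $\mathbb{Z}^{n}$; in particular each $g(\gamma)$ is primitive, and since distinct laminates give distinct rays of $\mathcal{F}(D_i)$ the map $g$ is then injective on laminates. For injectivity, if $v=g(\mathcal{X})=g(\mathcal{X}')$ then $v$ lies in the relative interior of both $C(\underline{\mathcal{X}})$ and $C(\underline{\mathcal{X}'})$; a simplicial fan has a unique minimal cone containing a given point in its relative interior, so these cones agree, forcing $\underline{\mathcal{X}}=\underline{\mathcal{X}'}$, and the multiplicities then match by linear independence. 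For surjectivity, any $v\in\mathbb{Z}^{n}$ lies (as $|\mathcal{F}(D_i)|=\mathbb{R}^{n}$) in the relative interior of a unique cone $\sigma=C(\mathcal{Y})$, which is a face of some unimodular maximal cone; expressing $v$ in the corresponding $\mathbb{Z}$-basis yields integer coordinates that are positive on the rays of $\sigma$ and zero elsewhere, and these are exactly the multiplicities of a lamination mapping to $v$.

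The hard part will be the unimodularity of the maximal cones, and I would obtain it by propagation along flips. One first exhibits a distinguished complete $D_i$-lamination whose laminates each cross a single $\bullet$-arc transversally, so that their $g$-vectors are the standard basis $e_1,\dots,e_n$; this base cone is unimodular. Since $\mathcal{F}(D_i)$ is a complete simplicial fan, any two maximal cones are linked through codimension-one walls, and crossing such a wall replaces a single laminate $\gamma_k$ by one new laminate $\gamma_k'$ while preserving the other $n-1$ laminates, with $g(\gamma_k)$ and $g(\gamma_k')$ on opposite sides of the shared wall. The key point is that in the expansion $g(\gamma_k')=-g(\gamma_k)+\sum_{j\ne k}a_j\,g(\gamma_j)$ the coefficient of $g(\gamma_k)$ equals $-1$; granting this, the transition matrix between adjacent bases has determinant $-1$, so unimodularity propagates from the base cone to all maximal cones by connectivity. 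For the dissections $D_1$ and $D_2$ this elementary-flip property is of finite type and can be verified directly from the explicit list of laminates and $g$-vectors obtained exactly as in Examples (1) and (2) of Section \ref{Secexample}; that explicit route is how I would actually close the argument.
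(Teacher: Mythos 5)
Your proposal follows the same skeleton as the paper's proof: both use Proposition \ref{Di fan} and Theorem \ref{comp simp fan} to get a finite simplicial fan covering $\mathbb{R}^{n}$ with all laminates non-closed, locate a given integer vector in the relative interior of a unique cone of $\mathcal{F}(D_i)$, and recover the lamination by reading multiplicities off the coordinates along the rays of that cone. The genuine difference is where the \emph{integrality} of those coordinates comes from. The paper writes that ``since $C(\mathcal{X}')$ is simplicial, $g$ is uniquely written by $g=\sum_{\gamma\in\mathcal{X}'}a_{\gamma}g(\gamma)$ for $a_{\gamma}\in\mathbb{Z}_{>0}$,'' which is not a consequence of simpliciality alone: the cone spanned by $(1,0)$ and $(1,2)$ is simplicial, yet its interior lattice point $(1,1)$ has coefficients $\tfrac12,\tfrac12$. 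The paper is tacitly using the stronger fact, contained in the results of \cite{PPP} behind Theorem \ref{g base}, that the $g$-vectors of a complete $D$-lamination form a $\mathbb{Z}$-basis of $\mathbb{Z}^{n}$. You correctly isolate this unimodularity as the real content of the surjectivity step and propose to prove it rather than cite it; in that respect your write-up is more self-contained and more explicit about what is being used.

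Where your proposal falls short is in actually discharging the unimodularity. The propagation-along-flips scheme is sound in principle for a finite complete simplicial fan (the dual graph is connected, and adjacent maximal cones share $n-1$ rays), but the coefficient-$(-1)$ claim at each wall is exactly as strong as what you are trying to prove, and your plan to ``verify it directly from the explicit list of laminates as in Examples (1) and (2)'' conflates the $n=3$ instances computed in Section \ref{Secexample} with the dissections $D_1,D_2$, which are families indexed by all $n$; for each fixed $n$ the check is finite, but the corollary quantifies over every $n$, so a general argument is still owed. The gap is fixable, and in a way that makes the flip propagation unnecessary: the same enumeration invoked in the proof of Proposition \ref{Di fan} shows that every $D_1$-laminate has $g$-vector of the form $\pm e_i$ or $e_i-e_j$, and every $D_2$-laminate of the form $\pm e_i$ or $\pm(e_i-e_j)$ (this is precisely what the counts $\tfrac12 n(n+3)$ and $n(n+1)$ enumerate). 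Any matrix whose columns have at most one entry $+1$, at most one entry $-1$, and zeros elsewhere is totally unimodular, so \emph{any} $n$ linearly independent such vectors already form a $\mathbb{Z}$-basis of $\mathbb{Z}^{n}$; hence every maximal cone of $\mathcal{F}(D_i)$ is unimodular outright, with no base cone and no wall-crossing needed. With that substitution your argument closes correctly; alternatively, one can simply quote the $\mathbb{Z}$-basis statement from \cite{PPP}, which is what the paper implicitly does.
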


\begin{proof}
By Proposition \ref{Di fan}, for $i \in \{1,2\}$, $\mathcal{F}(D_i)$ is a finite non-singular fan satisfying 
\begin{equation}\label{Diall}
    \vert \mathcal{F}(D_i)\vert =\mathbb{R}^{n}.
\end{equation} 
We show that the map $\mathcal{X}\mapsto g(\mathcal{X})$ gives a one-to-one correspondence between the set of $D_i$-laminations and $\mathbb{Z}^{n}$. Here, we remark that every $D_i$-lamination consists only of non-closed $D_i$-laminates by Proposition \ref{comp simp fan}. 
%In addition, we also remark that the correspondence $\gamma\mapsto g(\gamma)$ is injective from the set of $D_1$-laminates to $\mathbb{Z}^n$ explained as in Section \ref{Secexample}. 
Let $g\in \mathbb{Z}^{n}$ be an integer vector. By \eqref{Diall}, there is a complete $D_i$-lamination $\mathcal{X}'$ such that $g\in C(\mathcal{X}')$. 
Since $C(\mathcal{X}')$ is non-singular, $g$ is uniquely written by $g=\sum_{\gamma\in \mathcal{X}'}a_{\gamma}g(\gamma)$ for non-negative integers $a_{\gamma}$. Consider a $D_i$-lamination $\mathcal{X}$ consisting of $a_{\gamma}$ copies of $\gamma$ for all $\gamma\in\mathcal{X}'$. 
Then, it satisfies $g=g(\mathcal{X})$. 
Moreover, this is a unique $D_i$-lamination whose $g$-vector is $g$. In fact, we see in Section \ref{Secexample} that the map $\gamma\mapsto g(\gamma)$ is a injective map from the set of $D_i$-laminates to $\mathbb{Z}^n$ whose image is given by the sets \eqref{triangulation} and \eqref{ptriangulation} for $i=1,2$ respectively, and it naturally gives rise to an injective map from the set of $D_i$-laminations to $\mathbb{Z}^n$. It completes a proof.  
\end{proof}

%\begin{proposition} \cite[Theorem 6.14]{PPP} \label{finite case}
%Theorem \ref{lattice points} holds true if there are only finitely many $D$-laminates.
%\end{proposition}

%%%%%%%
%%%%%%%
%%%%%%%

Now, we are ready to prove Theorem \ref{lattice points}.
%%%
\begin{proof}[Proof of Theorem \ref{lattice points}]
Let $D$ be a $\bullet$-dissection of $(S,M)$ and $g = (g_d)_{d\in D}$ an arbitrary integer vector in $\mathbb{Z}^{\vert D\vert }$.
In the following, we construct a $D$-lamination $\mathcal{X}$ such that $g=g(\mathcal{X})$.

Recall that $(S, M)$ is divided into polygons $\triangle_v$ for all $v \in M_{\circ}$.
For $v\in M_{\circ}$, we can naturally embed $\triangle_v$ into the above $\bullet$-dissection $D_i$ of $(S_i,M_i)$ for $i=1$ or $2$.
More precisely, $(S_i,M_i)$ is obtained from $\triangle_v$ by gluing a digon with one $\circ$-marked point on each $\bullet$-arc of $D \cap \triangle_v$, where $D \cap \triangle_v$ form $D_i$ in $(S_i,M_i)$.
By Corollary \ref{for stars}, there is a unique $D_i$-lamination $\mathcal{X}_v$ such that $g(\mathcal{X}_v)=(g_d)_{d\in D \cap \triangle_v}$.
We regard $\mathcal{X}_v \cap \triangle_v$ as a set of pairwise non-intersecting curves in $(S,M)$ with $\vert g_d\vert $ endpoints on $d \in D \cap \triangle_v$.
Then we can glue the curves of $\mathcal{X}_v \cap \triangle_v$ for all $v \in M_{\circ}$ at their endpoints on $D$ (see Figure \ref{Fig gluing}).
As a result, we obtain a set $\mathcal{X}$ of pairwise non-intersecting $\circ$-laminates of $(S,M)$. 
From our construction, every $\circ$-laminate of $\mathcal{X}$ is a $D$-laminate, and hence $\mathcal{X}$ forms a $D$-lamination such that $g(\mathcal{X})=g$ as desired.

On the other hand, the uniqueness of $\mathcal{X}$ follows from that of $\mathcal{X}_v$ for any $v \in M_{\circ}$.
\end{proof}

%%%%%%%%%% Figure
\begin{figure}[htp]
\begin{center}
\begin{tikzpicture}[baseline=-1mm]
\coordinate(lu)at(-0.3,0.8); \coordinate(ld)at(-0.3,-0.8); \coordinate(ru)at(0.3,0.8); \coordinate(rd)at(0.3,-0.8);
\coordinate(llu)at(-0.9,1); \coordinate(lld)at(-0.9,-1); \coordinate(rru)at(0.9,1); \coordinate(rrd)at(0.9,-1);
\draw(-1.8,1)--(llu)--(lu)--(ld)--(lld)--(-1.8,-1) (1.8,1)--(rru)--(ru)--(rd)--(rrd)--(1.8,-1);
\draw(-1.8,-0.4)--(-0.3,0.2) (-1.8,-0.6)--(-0.3,0) (-1.8,-0.8)--(-0.3,-0.2);
\draw(1.8,0.8)--(0.3,0.2) (1.8,0.6)--(0.3,0) (1.8,0.4)--(0.3,-0.2);
\fill(llu)circle(1mm); \fill(lu)circle(1mm); \fill(ru)circle(1mm); \fill(rru)circle(1mm);
\fill(lld)circle(1mm); \fill(ld)circle(1mm); \fill(rd)circle(1mm); \fill(rrd)circle(1mm);
\draw[fill=white](-1.7,0)circle(1mm); \draw[fill=white](1.7,0)circle(1mm);
\end{tikzpicture}
 \hspace{10mm}$\Longrightarrow$\hspace{10mm}
\begin{tikzpicture}[baseline=-1mm]
\coordinate(u)at(0,0.8); \coordinate(d)at(0,-0.8);
\coordinate(llu)at(-1.5,1); \coordinate(lu)at(-0.6,1); \coordinate(rru)at(1.5,1); \coordinate(ru)at(0.6,1);
\coordinate(lld)at(-1.5,-1); \coordinate(ld)at(-0.6,-1); \coordinate(rrd)at(1.5,-1); \coordinate(rd)at(0.6,-1);
\draw(llu)--(lu)--(u)--(ru)--(rru) (lld)--(ld)--(d)--(rd)--(rrd) (u)--(d);
\draw (-1.5,-0.4)--(1.5,0.8) (-1.5,-0.6)--(1.5,0.6) (-1.5,-0.8)--(1.5,0.4);
\fill(lu)circle(1mm); \fill(ru)circle(1mm); \fill(ld)circle(1mm); \fill(rd)circle(1mm);
\fill(u)circle(1mm); \fill(d)circle(1mm);
\draw[fill=white](-1.4,0)circle(1mm); \draw[fill=white](1.4,0)circle(1mm);
\end{tikzpicture}
\end{center}
   \caption{An example of gluing process}
   \label{Fig gluing}
\end{figure}
%%%%%%%%%% Figure

\begin{example}\label{example gluing}
We consider the $\circ\bullet$-marked surface $(S,M)$ and the $\bullet$-dissection $D=\{1,2\}$ in Section \ref{Secexample}(3). The $\bullet$-dissection $D$ has exactly one polygon $\triangle_v$ and we can naturally embed $\triangle_v$ into the $\bullet$-dissection $D_2$ of $(S_2(4),S_2(4))$, where we also denote one side $i$ of $\triangle_v$ by $i'$ for $i \in \{1,2\}$ as follows:
\[
\triangle_v=
\begin{tikzpicture}[baseline=-1mm,scale=0.7]
\coordinate(lu)at(-1,1); \coordinate(ru)at(1,1); \coordinate(ld)at(-1,-1); \coordinate(rd)at(1,-1);
\draw (lu)--node[above]{$1$}(ru)--node[fill=white,inner sep=2]{$2'=2$}(rd)--node[below]{$1'=1$}(ld)--node[fill=white,inner sep=2]{$2$}(lu);
\draw[fill=black](lu)circle(1.4mm); \draw[fill=black](ru)circle(1.4mm);
\draw[fill=black](ld)circle(1.4mm); \draw[fill=black](rd)circle(1.4mm); 
\draw[fill=white](0,0)circle(1.4mm);
\end{tikzpicture}
\hspace{5mm}\hookrightarrow\hspace{5mm}
D_2=
\begin{tikzpicture}[baseline=-1mm,scale=0.7]
\coordinate(lu)at(-1,1); \coordinate(ru)at(1,1); \coordinate(ld)at(-1,-1); \coordinate(rd)at(1,-1);
\draw(0,0)circle(1.4);
\draw (lu)--node[fill=white,inner sep=2]{$1$}(ru)--node[fill=white,inner sep=2]{$2'$}(rd)--node[fill=white,inner sep=2]{$1'$}(ld)--node[fill=white,inner sep=2]{$2$}(lu);
\draw[fill=black](lu)circle(1.4mm); \draw[fill=black](ru)circle(1.4mm);
\draw[fill=black](ld)circle(1.4mm); \draw[fill=black](rd)circle(1.4mm); 
\draw[fill=white](0,0)circle(1.4mm);
\end{tikzpicture}
\]

For an integer vector $(g_1,g_2):=(3,-1) \in \mathbb{Z}^{\vert D \vert}$, we construct a $D$-lamination whose $g$-vector is $(3,-1)$ by our method in a proof of Theorem \ref{lattice points}. We consider the corresponding vector $(g_1,g_2,g_{1'},g_{2'})=(3,-1,3,-1)\in \mathbb{Z}^{\vert D_2 \vert}$, and there is a unique $D_2$-lamination $\mathcal{X}_v$ with $g(\mathcal{X}_v)=(3,-1,3,-1)$ given by the left picture of Figure \ref{Fig gluing example}. By gluing curves of $\mathcal{X}_v$ along $1=1'$ and $2=2'$, we obtain a $D$-lamination $\mathcal{X}$ of $(S,M)$ satisfying $g(\mathcal{X})=(3,1)$ and described in the right picture of Figure \ref{Fig gluing example}.

%%%%%%%%%% Figure
\begin{figure}[htp]
\[
\mathcal{X}_v=
\begin{tikzpicture}[baseline=-1mm,scale=0.7]
\coordinate(lu)at(-1,1); \coordinate(ru)at(1,1); \coordinate(ld)at(-1,-1); \coordinate(rd)at(1,-1);
\draw(0,0)circle(1.4);
\draw[dotted] (lu)--(ru)--(rd)--(ld)--(lu);
\draw[blue](75:1.4)--(195:1.4);
\draw[blue](67:1.4)..controls(-0.3,0.5)and(-0.45,0.3)..(-0.45,0)arc(-180:10:0.35);
\draw[blue](60:1.4)..controls(-0.2,0.4)and(-0.35,0.2)..(-0.35,0)arc(-180:10:0.27);
\draw[blue](-105:1.4)--(15:1.4);
\draw[blue](-113:1.4)..controls(0.3,-0.5)and(0.45,-0.3)..(0.45,0)arc(0:190:0.35);
\draw[blue](-120:1.4)..controls(0.2,-0.4)and(0.35,-0.2)..(0.35,0)arc(0:190:0.27);
\draw[fill=black](lu)circle(1.4mm); \draw[fill=black](ru)circle(1.4mm);
\draw[fill=black](ld)circle(1.4mm); \draw[fill=black](rd)circle(1.4mm); 
\draw[fill=white](0,0)circle(1.4mm); \draw[fill=white](0,1.4)circle(1.4mm);
\draw[fill=white](0,-1.4)circle(1.4mm); \draw[fill=white](1.4,0)circle(1.4mm);
\draw[fill=white](-1.4,0)circle(1.4mm);
\end{tikzpicture}
\hspace{10mm} \mathcal{X}=
\begin{tikzpicture}[baseline=-1mm,scale=0.7]
\coordinate(b)at(-0.5,0.5); \coordinate(w)at(0.5,-0.5);
\draw (-1.5,-1.5) rectangle (1.5,1.5);
\draw[fill=black](b)circle(1.4mm); \draw[fill=white](w)circle(1.4mm);
\draw[dotted](-1.5,0.5)--(1.5,0.5); \draw[dotted](-0.5,-1.5)--(-0.5,1.5);
\draw[blue](0.5,1.5)..controls(0,0.5)and(-1,-0.6)..(-1.5,-0.9);
\draw[blue](0.7,1.5)..controls(0.2,0.7)and(0,0)..(0,-0.5)arc(-180:10:0.38);
\draw[blue](0.9,1.5)..controls(0.5,1)and(0.1,0)..(0.1,-0.5)arc(-180:10:0.3);
\draw[blue](1.5,-0.9)--(0.9,-1.5);
\draw[blue](0.5, -1.5)..controls(0.8,-1)and(0.9,-0.7)..(0.9,-0.5)arc(0:190:0.3);
\draw[blue](0.7, -1.5)..controls(0.9,-1)and(1,-0.7)..(1,-0.5)arc(0:190:0.38);
\end{tikzpicture} 
\]
   \caption{A $D_2$-lamination $\mathcal{X}_v$ and a $D$-lamination $\mathcal{X}$ in Example \ref{example gluing}}
   \label{Fig gluing example}
\end{figure}
%%%%%%%%%% Figure
\end{example}

%%%%%
%%%%%
\section{Positive position and Dehn twists}\label{SecDehn}

In this section, we fix a $\bullet$-dissection $D$ of $(S,M)$ and make preparations for proving Theorem \ref{dense}.
The proof of Theorem \ref{dense} appears in the next section.

%%%
\subsection{Dehn twist along a closed $D$-laminate}
{\color{black}\ \hspace{-3mm}}
Let $V$ be a regular neighborhood of a simple closed curve $\ell$, which is homeomorphic to an annulus $S^1 \times [0,1]$. The \emph{Dehn twist $\mathsf{T}_{\ell}$ along $\ell$} is a self-homeomorphism of $S$ which is given by sending $(s,t)$ to $(\exp(-2\pi i t)s,t)$ on $V \simeq S^1 \times [0,1]$ and by fixing all points on $S \setminus V$ (see e.g. \cite{M}). In particular, $\mathsf{T}_{\ell}$ is oriented as follows:

\[
\begin{tikzpicture}[baseline=0mm]
 \coordinate (0) at (0,0);
 \draw (0,1)--(3,1) (0,-1)--(3,-1); \draw[blue] (0.4,0)--(3.4,0);
 \draw[red] (1.5,1) arc [start angle = 90, end angle = -90, x radius=4mm, y radius=10mm];
 \draw[red,dotted] (1.5,1) arc [start angle = 90, end angle = 270, x radius=4mm, y radius=10mm];
 \draw (0) circle [x radius=4mm, y radius=10mm];
 \draw (3,1) arc [start angle = 90, end angle = -90, x radius=4mm, y radius=10mm];
 \draw[dotted] (3,1) arc [start angle = 90, end angle = 270, x radius=4mm, y radius=10mm];
 \node[red] at(2,0.7) {$\ell$};
\end{tikzpicture}
 \hspace{7mm} \xrightarrow{\mathsf{T}_{\ell}} \hspace{7mm}
\begin{tikzpicture}[baseline=0mm]
 \coordinate (0) at (0,0);
 \draw (0,1)--(3,1) (0,-1)--(3,-1);
 \draw[blue] (0.4,0) .. controls (1.2,0) and (1.2,-1) .. (1.4,-1);
 \draw[blue] (1.6,1) .. controls (1.8,1) and (1.8,0) .. (3.4,0);
 \draw[blue,dotted] (1.4,-1) .. controls (1.6,-1) and (1.2,0.8) .. (1.6,1);
 \draw (0) circle [x radius=4mm, y radius=10mm];
 \draw (3,1) arc [start angle = 90, end angle = -90, x radius=4mm, y radius=10mm];
 \draw[dotted] (3,1) arc [start angle = 90, end angle = 270, x radius=4mm, y radius=10mm];
\end{tikzpicture}
\]\vspace{1mm}

\noindent In general, for a given $D$-laminate $\gamma$, $\mathsf{T}_{\ell}(\gamma)$ is a non-self-intersecting $\circ$-laminate, but it may not be a $D$-laminate because it does not necessarily satisfy the condition $(\ast)$ in Definition \ref{$D$-laminate}(2). We will give a condition that $\mathsf{T}_{\ell}(\gamma)$ becomes a $D$-laminate (see Lemma \ref{Dehn twist at l}).

Let $\gamma$ and $\delta$ be $D$-laminates.
For each intersection point $p$ of $\gamma$ and $\delta$, we can assume that $p$ lies in $S \setminus D$, thus $p \in \triangle_{v}$ for some $v \in M_{\circ}$. 
We set orientations of the segments of $\gamma$ and $\delta$ in $\triangle_v$ such that $v$ lies on the right to them.
We say that $\gamma$ {\it is in positive position for $\delta$} if $\gamma$ and $\delta$ do not intersect or $\gamma$ intersects $\delta$ from right to left at each intersection point (see Figure \ref{Fig pos pos}).

%%%%%%%%%% Figure
\begin{figure}[htp]
\begin{center}
\begin{tikzpicture}[baseline=0mm,scale=1]
\coordinate(l)at(180:1); \coordinate(lu)at(120:1); \coordinate(ru)at(60:1);
\coordinate(r)at(0:1); \coordinate(ld)at(-120:1); \coordinate(rd)at(-60:1);
\draw(l)--(lu)--(ru)--(r)--(rd)--(ld)--(l);
\fill(l)circle(1mm); \fill(lu)circle(1mm); \fill(ru)circle(1mm);
\fill(r)circle(1mm); \fill(ld)circle(1mm); \fill(rd)circle(1mm);
\draw(0,0)circle(1mm);
\draw(0,0.3)node[above]{$\gamma$}--node[pos=0.3]{\rotatebox{10}{$>$}}($(r)!0.4!(ru)$);
\draw(0,0.3)arc(95:270:2.6mm); \draw(0,-0.22)arc(-90:90:2mm);
\draw($(lu)!0.6!(ru)$)..controls(0.6,0.7)and(0.6,-0.7)..node[right,pos=0.6]{$\delta$}node[pos=0.7]{\rotatebox{-100}{$>$}}($(ld)!0.6!(rd)$);
\end{tikzpicture}
 \hspace{20mm}
\begin{tikzpicture}[baseline=0mm,scale=1]
\coordinate(l)at(180:1); \coordinate(lu)at(120:1); \coordinate(ru)at(60:1);
\coordinate(r)at(0:1); \coordinate(ld)at(-120:1); \coordinate(rd)at(-60:1);
\draw(l)--(lu)--(ru)--(r)--(rd)--(ld)--(l);
\fill(l)circle(1mm); \fill(lu)circle(1mm); \fill(ru)circle(1mm);
\fill(r)circle(1mm); \fill(ld)circle(1mm); \fill(rd)circle(1mm);
\fill[pattern=north east lines](ld)--(-0.8,-1.2)--(0.8,-1.2)--(rd);
\draw[fill=white]($(ld)!0.5!(rd)$)circle(1mm);
\draw($(l)!0.4!(lu)$)--node[below,pos=0.3]{$\gamma$}node[pos=0.5]{$>$}($(r)!0.4!(ru)$);
\draw($(lu)!0.8!(ru)$)--node[left,pos=0.7]{$\delta$}node[pos=0.6]{\rotatebox{-90}{$>$}}($(ld)!0.8!(rd)$);
\end{tikzpicture}
\end{center}
   \caption{A $D$-laminate $\gamma$ is in positive position for a $D$-laminate $\delta$}
   \label{Fig pos pos}
\end{figure}
%%%%%%%%%% Figure

We observe the case that $\ell:=\delta$ is a closed $D$-laminate and $\gamma$ is in positive position for $\ell$. First, we make $\#(\gamma\cap\ell)$ copies of $\ell$, in particular, there are $(\#(\gamma\cap\ell))^2$ intersection points of $\gamma$ and them. For each intersection point $p$, let $s$ and $t$ be the oriented segments of $\gamma$ and $\ell$ in $\triangle_v$ containing $p$ respectively, as in the previous paragraph. We replace $s$ and $t$ with two oriented segments: one is from the starting point of $s$ to the ending point of $t$, and the other is from the starting point of $t$ to the ending point of $s$, where the starting or ending point of $s$ may be a spiral (see Figure \ref{Fig resolving}). By applying this process for all intersection points, the curve $\mathsf{T}_{\ell}(\gamma)$ is obtained from $\gamma$ and $\ell$ (see Figure \ref{Fig process of Dehn twist}).

%%%%%%%%%% Figure
\begin{figure}[htp]
\begin{center}
\begin{tikzpicture}[baseline=-1mm,scale=1]
\coordinate(l)at(180:1); \coordinate(lu)at(120:1); \coordinate(ru)at(60:1);
\coordinate(r)at(0:1); \coordinate(ld)at(-120:1); \coordinate(rd)at(-60:1);
\draw(l)--(lu)--(ru)--(r)--(rd)--(ld)--(l);
\fill(l)circle(1mm); \fill(lu)circle(1mm); \fill(ru)circle(1mm);
\fill(r)circle(1mm); \fill(ld)circle(1mm); \fill(rd)circle(1mm);
\draw(0,0)circle(1mm);
\draw(0,0.3)node[above]{$s$}--node[pos=0.3]{\rotatebox{10}{$>$}}($(r)!0.4!(ru)$);
\draw(0,0.3)arc(95:270:2.6mm); \draw(0,-0.22)arc(-90:90:2mm);
\draw($(lu)!0.6!(ru)$)..controls(0.6,0.7)and(0.6,-0.7)..node[right,pos=0.6]{$t$}node[pos=0.7]{\rotatebox{-100}{$>$}}($(ld)!0.6!(rd)$);
\end{tikzpicture}
 \hspace{10mm}$\Longrightarrow$\hspace{10mm}
\begin{tikzpicture}[baseline=-1mm,scale=1]
\coordinate(l)at(180:1); \coordinate(lu)at(120:1); \coordinate(ru)at(60:1);
\coordinate(r)at(0:1); \coordinate(ld)at(-120:1); \coordinate(rd)at(-60:1);
\draw(l)--(lu)--(ru)--(r)--(rd)--(ld)--(l);
\fill(l)circle(1mm); \fill(lu)circle(1mm); \fill(ru)circle(1mm);
\fill(r)circle(1mm); \fill(ld)circle(1mm); \fill(rd)circle(1mm);
\draw(0,0)circle(1mm);
\draw(0,0.3)..controls(0.6,0.4)and(0.6,-0.3)..node[pos=0.7]{\rotatebox{-110}{$>$}}($(ld)!0.6!(rd)$);
\draw(0,0.3)arc(95:270:2.6mm); \draw(0,-0.22)arc(-90:90:2mm);
\draw($(lu)!0.6!(ru)$)..controls(60:0.6)and(30:0.5)..node[pos=0.4]{\rotatebox{-55}{$>$}}($(r)!0.4!(ru)$);
\end{tikzpicture}
\end{center}
   \caption{Resolving an intersection point of two oriented segments $s$ and $t$}
   \label{Fig resolving}
\end{figure}
%%%%%%%%%% Figure

%%%%%%%%%% Figure
\begin{figure}[htp]
\[
\renewcommand{\arraystretch}{2}
\setlength{\tabcolsep}{3mm}
\begin{tabular}{ccc}
\begin{tikzpicture}[baseline=0mm]
 \draw(0,1)--node[pos=0.1]{$>$}(3,1) (0,-1)--node[pos=0.1]{$>$}(3,-1);
 \draw[blue](0,0.5)--(3,0.5) (0,0)--(3,0) (0,-0.5)--(3,-0.5);
 \draw[red](1.5,1)--(1.5,-1); \node[red]at(1.7,0.75){$\ell$};
\end{tikzpicture}
 & $\xrightarrow{\mathsf{T}_{\ell}}$ &
\begin{tikzpicture}[baseline=0mm]
 \draw(0,1)--node[pos=0.1]{$>$}(3,1) (0,-1)--node[pos=0.1]{$>$}(3,-1);
 \draw[blue](0,0.5)..controls(1.4,0.5)..(1.6,-1); \draw[blue](1.6,1)..controls(1.8,0.5)..(3,0.5);
 \draw[blue](0,0)..controls(1.3,0)..(1.5,-1); \draw[blue](1.5,1)..controls(1.7,0)..(3,0);
 \draw[blue](0,-0.5)..controls(1.2,-0.5)..(1.4,-1); \draw[blue](1.4,1)..controls(1.6,-0.5)..(3,-0.5);
\end{tikzpicture}
\\
$\downarrow$ && $\rotatebox{90}{$=$}$
\\
\begin{tikzpicture}[baseline=0mm]
 \draw(0,1)--node[pos=0.1]{$>$}(3,1) (0,-1)--node[pos=0.1]{$>$}(3,-1);
 \draw[blue](0,0.5)--(3,0.5) (0,0)--(3,0) (0,-0.5)--(3,-0.5);
 \draw[red](1.5,1)--(1.5,-1); \draw[red](1,1)--(1,-1); \draw[red](2,1)--(2,-1);
\end{tikzpicture}
 & $\xrightarrow{\rm resolving}$ &
\begin{tikzpicture}[baseline=0mm]
 \draw(0,1)--node[pos=0.1]{$>$}(3,1) (0,-1)--node[pos=0.1]{$>$}(3,-1);
 \draw[blue](0,0.5)--(0.75,0.5) (0,0)--(0.75,0) (0,-0.5)--(0.75,-0.5);
 \draw[blue](3,0.5)--(2.25,0.5) (3,0)--(2.25,0) (3,-0.5)--(2.25,-0.5);
 \draw[blue](1,1)--(1,0.75) (1.5,1)--(1.5,0.75) (2,1)--(2,0.75);
 \draw[blue](1,-1)--(1,-0.75) (1.5,-1)--(1.5,-0.75) (2,-1)--(2,-0.75);
 \draw[blue](2,0.75)arc(180:270:0.25);
 \draw[blue](1.5,0.75)arc(180:270:0.25)arc(90:0:0.25)arc(180:270:0.25);
 \draw[blue](1,0.75)arc(180:270:0.25)arc(90:0:0.25)arc(180:270:0.25)arc(90:0:0.25)arc(180:270:0.25);
 \draw[blue](0.75,0.5)arc(90:0:0.25)arc(180:270:0.25)arc(90:0:0.25)arc(180:270:0.25)arc(90:0:0.25);
 \draw[blue](0.75,0)arc(90:0:0.25)arc(180:270:0.25)arc(90:0:0.25);
 \draw[blue](0.75,-0.5)arc(90:0:0.25);
\end{tikzpicture}
\end{tabular}
\]
   \caption{A process of the Dehn twist along a simple closed curve $\ell$, where we identify the top and bottom horizontal lines in each figure}
   \label{Fig process of Dehn twist}
\end{figure}
%%%%%%%%%% Figure

\begin{lemma} \label{Dehn twist at l}
Let $\ell$ be a closed $D$-laminate and $\gamma$ a non-closed $D$-laminate which is in positive position for $\ell$. Then
\begin{enumerate}
\item[(1)] $\mathsf{T}_{\ell}(\gamma)$ is a non-closed $D$-laminate;
\item[(2)] $g(\mathsf{T}_{\ell}(\gamma))= g(\gamma) + \#(\gamma\cap\ell)g(\ell)$;
\item[(3)] if a $D$-laminate $\gamma'$ does not intersect $\ell$, then 
\[
\#(\gamma' \cap \gamma) = \#(\gamma' \cap \mathsf{T}_{\ell}(\gamma)).
\]
\end{enumerate}
\end{lemma}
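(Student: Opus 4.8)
The statement concerns the interaction of a Dehn twist $\mathsf{T}_\ell$ along a closed $D$-laminate $\ell$ with a non-closed $D$-laminate $\gamma$ that is in positive position for $\ell$. All three parts are essentially local computations near $\ell$, so my strategy is to reduce everything to what happens in a tubular neighborhood of $\ell$ and then invoke the combinatorial definition of $g$-vectors from \eqref{def g-vect}.

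First I would establish part (1). Since $\gamma$ is non-closed and $\mathsf{T}_\ell$ is a homeomorphism fixing $M$ and $\partial S$ setwise, $\mathsf{T}_\ell(\gamma)$ is automatically a $\circ$-laminate whose ends are of the allowed type, and it is non-closed and non-self-intersecting. The real content is verifying that $\mathsf{T}_\ell(\gamma)$ still satisfies condition $(\ast)$, i.e.\ that it is a genuine $D$-laminate. Here the positive position hypothesis is crucial: away from the annular neighborhood $N(\ell)$ of $\ell$, the curve $\gamma$ is unchanged, so condition $(\ast)$ persists there; inside $N(\ell)$ the twist inserts copies of $\ell$ parallel strands, and because $\gamma$ crosses $\ell$ consistently from right to left (the positive-position condition), these inserted arcs inherit the side conventions of $\ell$, which itself satisfies $(\ast)$. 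I would draw the picture in the annulus and check, at each $\bullet$-arc crossing produced by the twist, that the two endpoints of the dual $\circ$-arc still lie on opposite sides.

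Next, part (2) is the $g$-vector bookkeeping, and I expect this to be the main obstacle. The key observation is that, because $\gamma$ is in positive position, every time $\gamma$ crosses $\ell$ the Dehn twist splices in one full parallel copy of $\ell$, and the positivity ensures all these spliced copies are coherently oriented with $\gamma$ rather than cancelling. Concretely, I would argue that for each $d \in D$ the signed count $g(\mathsf{T}_\ell(\gamma))_d$ decomposes as the signed crossings coming from the undisturbed part of $\gamma$ (contributing $g(\gamma)_d$) plus the crossings coming from the $\#(\gamma\cap\ell)$ inserted parallel copies of $\ell$ (each contributing $g(\ell)_d$). The delicate point is that intersection signs add rather than cancel: one must rule out that a positive and a negative intersection on $d$ are created by the splicing. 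This is exactly where positive position is needed, and I would verify it by a local analysis at a crossing of $\gamma$ with $\ell$ inside a single polygon $\triangle_v$, tracking the right/left conventions from the definition of positive and negative intersection points.

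Finally, part (3) follows from invariance of geometric intersection number. Since $\gamma'$ does not meet $\ell$, I can isotope $\gamma'$ to lie entirely in the complement of the annular neighborhood $N(\ell)$ on which $\mathsf{T}_\ell$ is supported; then $\mathsf{T}_\ell$ fixes $\gamma'$ pointwise up to isotopy. Because $\mathsf{T}_\ell$ is a homeomorphism, it preserves transverse intersection numbers, so
\[
\#(\gamma' \cap \mathsf{T}_\ell(\gamma)) = \#(\mathsf{T}_\ell^{-1}(\gamma') \cap \gamma) = \#(\gamma' \cap \gamma),
\]
the last equality using $\mathsf{T}_\ell^{-1}(\gamma') = \gamma'$. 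I would take care to note that these are minimal geometric intersection numbers in the respective isotopy classes, so that the equality is genuine and not merely an inequality, which again uses that $\gamma'$ can be pushed off the support of the twist.
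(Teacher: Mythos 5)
Your proposal is correct and takes the same route the paper intends: the paper's entire proof of this lemma is the single sentence ``The assertions immediately follow from the assumption.'' Your annulus-local analysis --- splicing a parallel copy of $\ell$ into $\gamma$ at each crossing, using the positive-position hypothesis to rule out sign cancellation in (1) and (2), and using that $\mathsf{T}_{\ell}$ is a homeomorphism supported in an annular neighborhood that $\gamma'$ can be isotoped off of for (3) --- is precisely the verification the authors leave implicit.
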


\begin{proof}
Since $\gamma$ is a $D$-laminate, $\mathsf{T}_{\ell}(\gamma)$ is a non-self-intersecting $\circ$-laminate. Moreover, it satisfies the condition $(\ast)$ in Definition \ref{$D$-laminate}(2) except for a neighborhood of each intersection of $\gamma$ and $\ell$ since both $\gamma$ and $\ell$ satisfy $(\ast)$. On a neighborhood of each intersection of $\gamma$ and $\ell$, it follows that $(\ast)$ holds from a behavior of resolving the intersection as in Figure \ref{Fig resolving}. Therefore, the assertion (1) holds. Considering the numbers of intersections with each $d \in D$, we get the equation of (2) by the above observation. The assertion (3) also follows from the above observation.
\end{proof}

In the situation of Lemma \ref{Dehn twist at l}, we can repeat the Dehn twist $\mathsf{T}_{\ell}$. 
Moreover, Lemma \ref{Dehn twist at l} is generalized for $D$-laminations.
For closed curves $\ell_1,\ldots,\ell_k$ and $m_1, \ldots, m_k \in \mathbb{Z}_{\ge 0}$, we write
\[
 \mathsf{T}_{(\ell_1,\ldots,\ell_k)}^{(m_1,\ldots,m_k)}:=\mathsf{T}_{\ell_1}^{m_1}\cdots\mathsf{T}_{\ell_k}^{m_k}.
\]
 Note that if $\ell_1,\ldots,\ell_k$ are pairwise non-intersecting, then all $\mathsf{T}_{\ell_i}$ are commutative.

\begin{proposition} \label{Dehn twists at l_1,..., l_k}
Let $\{\ell_1,\ldots,\ell_k\}$ be a $D$-lamination consisting only of closed $D$-laminates and $\{\gamma_1,\ldots,\gamma_h\}$ a $D$-lamination consisting only of non-closed $D$-laminates which are in positive position for any $\ell_i$. Then for any $m_1,\ldots,m_k \in \mathbb{Z}_{\ge 0}$ and $\mathsf{T}:=\mathsf{T}^{(m_1,\ldots,m_k)}_{(\ell_1,\ldots,\ell_k)}$,
\begin{enumerate}
\item[(1)] $\{\mathsf{T}(\gamma_1), \ldots, \mathsf{T}(\gamma_h)\}$ is a $D$-lamination consisting only of non-closed $D$-laminates;
\item[(2)] we have the equality 
\[
 \sum_{i=1}^h g(\mathsf{T}(\gamma_i)) = \sum_{i=1}^h g(\gamma_i) + \sum_{i=1}^h\sum_{j=1}^k m_j \#(\gamma_i \cap \ell_j)g(\ell_j).
\]
\end{enumerate}
\end{proposition}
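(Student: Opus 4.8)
The plan is to prove both assertions simultaneously by induction on the total number $m_1 + \cdots + m_k$ of Dehn twists, reducing each step to the single-twist Lemma \ref{Dehn twist at l}. Since $\ell_1, \ldots, \ell_k$ are pairwise non-intersecting, the twists $\mathsf{T}_{\ell_j}$ commute, so at each stage I may peel off one factor and write $\mathsf{T} = \mathsf{T}_{\ell_{j_0}} \circ \mathsf{T}'$, where $\mathsf{T}'$ is a product of the same form whose exponents agree with $(m_1, \ldots, m_k)$ except that the $j_0$-th is smaller by one. The base case $\mathsf{T} = \mathrm{id}$ is trivial.

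The crucial preliminary observation that makes the induction run is that the relevant geometric data are preserved by the twists. Concretely, I would check that for any two closed $D$-laminates $\ell, \ell'$ among the $\ell_j$ (possibly equal) and any non-closed $D$-laminate $\gamma$ in positive position for both, the curve $\mathsf{T}_{\ell'}(\gamma)$ is again a non-closed $D$-laminate in positive position for $\ell$, and satisfies $\#(\mathsf{T}_{\ell'}(\gamma) \cap \ell) = \#(\gamma \cap \ell)$. When $\ell' = \ell$ this is exactly the iterability already noted after Lemma \ref{Dehn twist at l}: the twist $\mathsf{T}_\ell$ fixes $\ell$ and leaves unchanged both the number of crossings with $\ell$ and their right-to-left orientation. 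When $\ell' \neq \ell$ the two curves are disjoint, so $\mathsf{T}_{\ell'}$ is supported in an annular neighborhood of $\ell'$ disjoint from $\ell$; hence it alters neither the intersection points of $\gamma$ with $\ell$ nor the local crossing data there. Iterating, each $\mathsf{T}'(\gamma_i)$ remains a non-closed $D$-laminate in positive position for every $\ell_j$, with $\#(\mathsf{T}'(\gamma_i) \cap \ell_j) = \#(\gamma_i \cap \ell_j)$ for all $j$.

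With this in hand, assertion (1) follows: each $\mathsf{T}(\gamma_i)$ is a non-closed $D$-laminate by repeated application of Lemma \ref{Dehn twist at l}(1), and since $\mathsf{T}$ is a homeomorphism of $S$ carrying pairwise disjoint representatives of $\gamma_1, \ldots, \gamma_h$ to pairwise disjoint curves, the images are pairwise compatible; thus $\{\mathsf{T}(\gamma_1), \ldots, \mathsf{T}(\gamma_h)\}$ is a $D$-lamination. For assertion (2), applying Lemma \ref{Dehn twist at l}(2) to each $\mathsf{T}'(\gamma_i)$ and using $\#(\mathsf{T}'(\gamma_i) \cap \ell_{j_0}) = \#(\gamma_i \cap \ell_{j_0})$ gives
\[
\sum_{i=1}^h g(\mathsf{T}(\gamma_i)) = \sum_{i=1}^h g(\mathsf{T}'(\gamma_i)) + \sum_{i=1}^h \#(\gamma_i \cap \ell_{j_0}) g(\ell_{j_0}).
\]
Substituting the inductive formula for $\mathsf{T}'$ and combining the two $\ell_{j_0}$-contributions (whose coefficients $m'_{j_0} + 1 = m_{j_0}$ add up correctly, while $m'_j = m_j$ for $j \neq j_0$) yields the claimed identity.

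The main obstacle is the preliminary invariance statement of the second paragraph — in particular, verifying that positive position is genuinely preserved so that Lemma \ref{Dehn twist at l} can legitimately be reapplied at every stage. The disjoint case $\ell' \neq \ell$ is immediate from the locality of the twist's support, but the self-case $\ell' = \ell$ requires a careful look at how $\mathsf{T}_\ell$ drags $\gamma$ along $\ell$ without creating new crossings or reversing the orientation of existing ones; once that is secured, the remainder is bookkeeping.
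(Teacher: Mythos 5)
Your proposal is correct and takes essentially the same approach as the paper: both arguments reduce everything to the single-twist Lemma \ref{Dehn twist at l}, using its part (3) (your locality and iterability observations) to verify that positive position and intersection numbers with each $\ell_j$ are preserved at every stage, so that parts (1) and (2) of that lemma can be reapplied, and then summing the resulting $g$-vector increments. Your explicit induction on $m_1+\cdots+m_k$ is simply a formalization of the paper's ``repeating this process'' step, and your bookkeeping in (2) matches the paper's calculation.
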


\begin{proof} 
(1) Let $\mathcal{X}:=\{\gamma_1,\ldots, \gamma_h\}$ and $\mathcal{Y}:=\{\ell_1,\ldots, \ell_k\}$. For any $\gamma \in \mathcal{X}$ and $\ell \in \mathcal{Y}$, by Lemma \ref{Dehn twist at l}(1), $\mathsf{T}_{\ell}(\gamma)$ is a non-closed $D$-laminate. 
Lemma \ref{Dehn twist at l}(3) says that $\mathsf{T}_{\ell}(\gamma) \cap \ell'$ is naturally identified with $\gamma \cap \ell'$ for any $\ell' \in \mathcal{Y}$. 
In particular, $\mathsf{T}_{\ell}(\gamma)$ is also in positive position for $\ell'$, thus $\mathsf{T}_{\ell'}\mathsf{T}_{\ell}(\gamma)$ is a non-closed $D$-laminate. 
Repeating this process, $\mathsf{T}(\gamma)$ is a non-closed $D$-laminate. 
Since $\mathsf{T}(\gamma)$ and $\mathsf{T}(\gamma')$ do not intersect for any $\gamma, \gamma' \in \mathcal{X}$, the assertion holds.

(2) The equality is calculated from Lemma \ref{Dehn twist at l}(2) since Lemma \ref{Dehn twist at l}(3) says that the number of all intersection points of $\mathcal{X}$ and $\mathcal{Y}$ is not changed by the Dehn twists.
\end{proof}

%%%
%%%
\subsection{Non-closed $D$-laminate $\ell^d$ for a closed $D$-laminate $\ell$} \label{construction}

Let $\mathcal{X}$ be a $D$-lamination consisting only of non-closed $D$-laminates. 
We assume that there is a closed $D$-laminate $\ell$ such that $\mathcal{X} \sqcup \{\ell\}$ is a $D$-lamination.
By the definition of closed $D$-laminates, there exists $d \in D$ such that $g(\ell)_d >0$. 
From now, we construct a non-closed $D$-laminate $\ell^d$ such that 
\begin{enumerate}
\item[(a)] $\ell^d$ is a non-closed $D$-laminate which is compatible with any $D$-laminate of $\mathcal{X}$;
\item[(b)] $\ell^d$ intersects with $\ell$ so that $\ell^d$ is in positive position for $\ell$.
\end{enumerate} 
It plays an important role to prove Theorem \ref{dense} in the next section.

First, for $d \in D$, we define a $D$-laminate $d^{\ast}_+$ (resp., $d^{\ast}_-$) as follows (see Figure \ref{Fig hat check}):
\begin{enumerate}
 \item[$\bullet$] $d^{\ast}_+$ (resp., $d^{\ast}_-$) is a laminate running along $d^{\ast}$ in a small neighborhood of it;
 \item[$\bullet$] If $d^{\ast}$ has an endpoint $v \in M_{\circ}$ on a component $C$ of $\partial S$, 
then the corresponding endpoint of $d^{\ast}_+$ (resp., $d^{\ast}_-$) is located near $v$ on $C$ in the counterclockwise (resp., clockwise) direction;
 \item[$\bullet$] If $d^{\ast}$ has an endpoint at a puncture $p \in M_{\circ}$, 
then the corresponding end of $d^{\ast}_+$ (resp., $d^{\ast}_-$) is a spiral around $p$ counterclockwise (resp., clockwise).
\end{enumerate}

%%%%%%%%%% Figure
\begin{figure}[htp]
\begin{center}
\begin{tikzpicture}[baseline=0mm]
\coordinate(u)at(0,0.8); \coordinate(d)at(0,-0.8);
\coordinate(llu)at(-1.5,1); \coordinate(lu)at(-0.6,1); \coordinate(rru)at(1.5,1); \coordinate(ru)at(0.6,1);
\coordinate(lld)at(-1.5,-1); \coordinate(ld)at(-0.6,-1); \coordinate(rrd)at(1.5,-1); \coordinate(rd)at(0.6,-1);
\draw(llu)--(lu)--(u)--(ru)--(rru) (lld)--(ld)--(d)--(rd)--(rrd) (u)--(d) (llu)--(lld);
\draw[blue](-1.5,0.5)..controls(-0.5,0.5)and(0.5,-0.3)..(1,-0.3);
\draw[blue](1,-0.3)arc(-90:90:2.6mm)node[above,pos=0.8]{$d^{\ast}_+$};
\draw[blue](1,0.22)arc(90:270:2mm);
\draw(-1.5,0)--node[below,pos=0.3]{$d^{\ast}$}(1,0);
\fill[pattern=north east lines](llu)--(-1.8,1.2)--(-1.8,-1.2)--(lld);
\fill(lu)circle(1mm); \fill(ru)circle(1mm); \fill(ld)circle(1mm); \fill(rd)circle(1mm);
\fill(u)circle(1mm); \fill(d)circle(1mm); \fill(llu)circle(1mm); \fill(lld)circle(1mm);
\draw[fill=white](-1.5,0)circle(1mm); \draw[fill=white](1,0)circle(1mm);
\end{tikzpicture}
 \hspace{10mm}
\begin{tikzpicture}[baseline=0mm]
\coordinate(u)at(0,0.8); \coordinate(d)at(0,-0.8);
\coordinate(llu)at(-1.5,1); \coordinate(lu)at(-0.6,1); \coordinate(rru)at(1.5,1); \coordinate(ru)at(0.6,1);
\coordinate(lld)at(-1.5,-1); \coordinate(ld)at(-0.6,-1); \coordinate(rrd)at(1.5,-1); \coordinate(rd)at(0.6,-1);
\draw(llu)--(lu)--(u)--(ru)--(rru) (lld)--(ld)--(d)--(rd)--(rrd) (u)--(d) (llu)--(lld);
\draw[blue](-1.5,-0.5)..controls(-0.5,-0.5)and(0.5,0.3)..(1,0.3);
\draw[blue](1,0.3)arc(90:-90:2.6mm)node[above,pos=0.1]{$d^{\ast}_-$};
\draw[blue](1,-0.22)arc(-90:-270:2mm);
\draw(-1.5,0)--node[above,pos=0.3]{$d^{\ast}$}(1,0);
\fill[pattern=north east lines](llu)--(-1.8,1.2)--(-1.8,-1.2)--(lld);
\fill(lu)circle(1mm); \fill(ru)circle(1mm); \fill(ld)circle(1mm); \fill(rd)circle(1mm);
\fill(u)circle(1mm); \fill(d)circle(1mm); \fill(llu)circle(1mm); \fill(lld)circle(1mm);
\draw[fill=white](-1.5,0)circle(1mm); \draw[fill=white](1,0)circle(1mm);
\end{tikzpicture}
\end{center}
   \caption{Two $D$-laminates $d^{\ast}_+$ and $d^{\ast}_-$}
   \label{Fig hat check}
\end{figure}
%%%%%%%%%% Figure

\noindent That is, $g(d^{\ast}_+)_{e}=\delta_{ed}$ (resp., $g(d^{\ast}_-)_{e}=-\delta_{ed}$) for $e\in D$, where $\delta$ is the Kronecker delta. 

On this notation, $g(\ell)_d>0$ implies $\ell \cap d^{\ast}_+ \neq \emptyset$ and $d^{\ast}_+$ is in positive position for $\ell$. 
Without loss of generality, we can assume that $p \in \ell \cap d^{\ast}_+$ lie on $d$ as in the left diagram of Figure \ref{Fig ell v} .

Second, for each endpoint $v$ of $d^{\ast}$, we define a curve $\ell_v$ of $S$ as follows: Consider the segment $\alpha:=d^{\ast}_+\cap\triangle_{v}$.
\begin{enumerate}
\item[$\bullet$] If $\alpha$ intersects none of $\mathcal{X}$, then let $\ell_v:=\alpha$ (see the center diagram of Figure \ref{Fig ell v});
\item[$\bullet$] Otherwise, we take a non-closed $D$-laminate $\gamma$ of $\mathcal{X}$ such that it gives the nearest intersection point of $\alpha$ and $\mathcal{X}$ from $p$, in which case let $p_v\in \alpha \cap \gamma$ be this nearest intersection point.
We denote by $q$ an endpoint of a connected segment in $\gamma \cap \triangle_v$ containing $p_v$ such that the intersection point $q$ is negative.
Then $\ell_v$ is a curve obtained by gluing the following two curves at $p_v$ (see the right diagram of Figure \ref{Fig ell v}):
\begin{enumerate}
 \item[(i)] a segment of $\alpha$ between $p$ and $p_v$;
 \item[(ii)] a segment of $\gamma$ obtained by cutting $\gamma$ at $p_v$, that contains $q$.
\end{enumerate}
\end{enumerate}
%%%%%%%%%% Figure
\begin{figure}[htp]
\begin{center}
\begin{tikzpicture}[baseline=0mm,scale=1.4]
\coordinate(u)at(0,0.8); \coordinate(d)at(0,-0.8);
\coordinate(llu)at(-1.2,1); \coordinate(lu)at(-0.6,1); \coordinate(rru)at(1.2,1); \coordinate(ru)at(0.6,1);
\coordinate(lld)at(-1.2,-1); \coordinate(ld)at(-0.6,-1); \coordinate(rrd)at(1.2,-1); \coordinate(rd)at(0.6,-1);
\draw(llu)--(lu)--(u)--(ru)--(rru) (lld)--(ld)--(d)--(rd)--(rrd) (u)--node[left,pos=0.7]{$d$}(d);
\draw(-1,0.3)..controls(-0.5,0.3)and(0.5,-0.3)..(1,-0.3);
\draw(-1.2,0.8)..controls(-0.5,0.5)and(0.5,-0.5)..node[above,pos=0.3]{$\ell$}(1.2,-0.8);
\draw[dotted](1,-0.3)arc(-90:90:2.6mm)node[above,pos=0.8]{$d^{\ast}_+$};
\draw[dotted](1,0.22)arc(90:270:2mm);
\draw[dotted](-1,0.3)arc(90:270:2.6mm); \draw[dotted](-1,-0.22)arc(-90:50:2mm);
\fill(lu)circle(0.7mm); \fill(ru)circle(0.7mm); \fill(ld)circle(0.7mm); \fill(rd)circle(0.7mm);
\fill(u)circle(0.7mm); \fill(d)circle(0.7mm);
\draw[fill=white](-1,0)circle(0.7mm); \draw[fill=white](1,0)circle(0.7mm);
\node at(0.15,0.1){$p$};
\end{tikzpicture}
 \hspace{10mm}
\begin{tikzpicture}[baseline=0mm,scale=1.4]
\coordinate(u)at(0,0.8); \coordinate(d)at(0,-0.8);
\coordinate(llu)at(-1.2,1); \coordinate(lu)at(-0.6,1);
\coordinate(lld)at(-1.2,-1); \coordinate(ld)at(-0.6,-1);
\draw(llu)--(lu)--(u)--(d)--(ld)--(lld) (u)--(d);
\draw[blue](-1,0.3)..controls(-0.8,0.3)and(-0.3,0.1)..(0,-0);
\draw[blue,dotted](-1,0.3)arc(90:270:2.6mm)node[below]{$\ell_v=\alpha$}; 
\draw[blue,dotted](-1,-0.22)arc(-90:50:2mm);
\fill(lu)circle(0.7mm); \fill(ld)circle(0.7mm);
\fill(u)circle(0.7mm); \fill(d)circle(0.7mm);
\draw[fill=white](-1,0)circle(0.7mm);
\node at(-1,0.17){$v$}; \node at(0.15,0){$p$};
\end{tikzpicture}
 \hspace{10mm}
\begin{tikzpicture}[baseline=0mm,scale=1.4]
\coordinate(u)at(0,0.8); \coordinate(d)at(0,-0.8);
\coordinate(llu)at(-1.2,1); \coordinate(lu)at(-0.6,1);
\coordinate(lld)at(-1.2,-1); \coordinate(ld)at(-0.6,-1);
\draw(llu)--(lu)--(u)--(d)--(ld)--(lld) (u)--(d) (llu)--(-1.6,0.3);
\draw(-1,0.3)..controls(-0.8,0.3)and(-0.3,0.1)..(0,0);
\draw[dotted](-1,0.3)arc(90:270:2.6mm)node[below]{$\alpha$}; 
\draw[dotted](-1,-0.22)arc(-90:50:2mm);
\draw($(llu)!0.3!(-1.6,0.3)$)node[left]{$q$}..controls(-0.5,0.3)and(-0.4,0)..node[left,pos=0.8]{$\gamma$}($(ld)!0.7!(d)$);
\draw[blue]($(llu)!0.3!(-1.6,0.3)$)..controls(-0.7,0.5)and(-0.1,0.05)..node[above,pos=0.5]{$\ell_v$}(0,0);
\fill(lu)circle(0.7mm); \fill(ld)circle(0.7mm);
\fill(u)circle(0.7mm); \fill(d)circle(0.7mm); \fill(llu)circle(0.7mm);
\draw[fill=white](-1,0)circle(0.7mm);
\node at(-1,0.17){$v$}; \node at(0.15,0){$p$}; \node at(-0.63,0.05){$p_v$};
\end{tikzpicture}
\end{center}
   \caption{A closed $D$-laminate $\ell$ and $d \in D$ with $g(\ell)_d >0$ (left), constructions of a curve $\ell_v$ (center, right)}
   \label{Fig ell v}
\end{figure}
%%%%%%%%%% Figure

Finally, we define $\ell^d$ as a curve obtained by gluing $\ell_v$ and $\ell_{v'}$ at $p$ for endpoints $v$ and $v'$ of $d^{\ast}$.
A segment of $\ell^d$ between $p_v$ and $p_{v'}$ is called its {\it center segment}, where $p_v$ is a point on $\ell_v$ sufficiently close to $v$ if $\ell_v=\alpha$.
It follows from the construction that $\ell^d$ satisfies (a) and (b) above.
Moreover, (b) is generalized as follows.

\begin{lemma}\label{pp for elld}
In the above situations, if a $D$-laminate $\gamma$ is compatible with $\mathcal{X}\cup\{\ell\}$, then $\ell^d$ is in positive position for $\gamma$.
\end{lemma}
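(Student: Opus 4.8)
The plan is to verify the positive-position condition at every intersection point of $\ell^d$ with $\gamma$, using in each polygon $\triangle_w$ the convention that the marked point $w$ lies on the right of the oriented segments. First I would exploit the construction of $\ell^d$ to localize these intersection points. By construction, outside the two polygons $\triangle_v$ and $\triangle_{v'}$ meeting along $d$, the curve $\ell^d$ consists of the pieces of type (ii), each a thin push-off of a sub-arc of some $\gamma'\in\mathcal{X}$ running to a negative endpoint $q\in\gamma'\cap D$. Since $\gamma$ is compatible with $\mathcal{X}$, it is disjoint from every such $\gamma'$; choosing the push-offs close enough to the corresponding $\gamma'$ (inside the isotopy gap separating $\gamma$ from $\gamma'$), these tail pieces meet $\gamma$ nowhere. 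Hence every intersection point of $\ell^d$ and $\gamma$ lies on a piece of type (i), that is, on one of the two sub-arcs of $d^{\ast}_{+}$ cut out by $\alpha=d^{\ast}_{+}\cap\triangle_v$ between $p$ and $p_v$, and by $d^{\ast}_{+}\cap\triangle_{v'}$ between $p$ and $p_{v'}$.

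It then remains to check positivity at a crossing $x$ on such an $\alpha$-portion, say in $\triangle_v$. I would orient both $\ell^d$ and $\gamma$ at $x$ with $v$ on the right; along $\alpha$ the curve $\ell^d$ follows $d^{\ast}_{+}$, whose crossing of $d$ is positive since $g(d^{\ast}_{+})_d=+1$ and whose end near $v$ lies counterclockwise from $v$, so this is the correct positive-position orientation. The goal is to show that $\ell^d$ crosses $\gamma$ from right to left at $x$, and I would split into two cases for the local arc $\beta$ of $\gamma$ through $x$. If $\beta$ meets $d$, property $(\ast)$ forces the endpoints $v,v'$ of $d^{\ast}$ onto opposite sides of $\gamma$ in $\triangle_v\cup\triangle_{v'}$; as $\alpha$ joins $p\in d$ to the region near $v$, this fixes the crossing as right-to-left, as in Figure~\ref{Fig pos pos}. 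If $\beta$ avoids $d$, then to meet $\alpha$ it must separate $v$ from $p$ in $\triangle_v$, and the counterclockwise approach of $d^{\ast}_{+}$ to $v$ again gives a right-to-left crossing.

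The hypothesis that $\gamma$ is compatible with $\ell$, rather than merely with $\mathcal{X}$, enters near the point $p$. Since $\ell^d$ meets $\ell$ exactly at $p$ and in positive position (property~(b)), while $\gamma$ is disjoint from $\ell$, the curve $\gamma$ is confined to one side of $\ell$ in a neighborhood of $p$; this excludes the crossings of $\gamma$ with the center segment that would otherwise be negative, and reconciles the local pictures on the $\triangle_v$- and $\triangle_{v'}$-sides of $d$. I expect the main obstacle to be precisely this sign bookkeeping near $p$, together with the transition points $p_v$ and $p_{v'}$ where $\ell^d$ leaves the $\alpha$-piece to turn onto the push-off of $\gamma'$: one must check that no negative crossing is created there and that the two cases above are exhaustive. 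This is a finite local analysis inside $\triangle_v\cup\triangle_{v'}$, guided by Figures~\ref{Fig pos pos} and~\ref{Fig ell v}, and it parallels the verification of property~(b) already used in the construction, now run for an arbitrary compatible $\gamma$ in place of $\ell$.
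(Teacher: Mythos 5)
Your first paragraph reproduces the paper's proof exactly: the tails of $\ell^d$ are sub-arcs of curves in $\mathcal{X}$, so compatibility of $\gamma$ with $\mathcal{X}$ forces every intersection point of $\gamma$ and $\ell^d$ onto the center segment. That localization is correct, and it is all the paper says ("thus the assertion holds"). The gap is in your attempt to verify positivity at those crossings. The pivotal claim "if $\beta$ avoids $d$, then to meet $\alpha$ it must separate $v$ from $p$ in $\triangle_v$" is not forced: a crossing with the center segment only forces $\beta$ to separate the two endpoints $p_v$ and $p$ of that segment, and $\beta$ may well leave $v$ on the \emph{same} side as $p$. Concretely, if $\gamma'\in\mathcal{X}$ crosses $\alpha$ twice (at $p_v$ and at a second point farther from $p$), so that the tail of $\ell^d$ doubles back across $\alpha$ before exiting $\triangle_v$ at $q$, then a laminate $\gamma$ compatible with $\mathcal{X}\cup\{\ell\}$ can have an arc $\beta$ entering between the tail and $\partial\triangle_v$, crossing $\alpha$ once between $p_v$ and $p$, and exiting beyond the tail; such a $\beta$ separates $p_v$ from both $v$ and $p$, contradicting your dichotomy.

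In that configuration your argument, as written, actually computes the wrong sign, because you also orient $\ell^d$ at the crossing as if its segment in $\triangle_v$ were just $d^*_+$ ("the counterclockwise approach of $d^*_+$ to $v$"). The positive-position test of Figure \ref{Fig pos pos} orients the \emph{entire} connected segment of $\ell^d\cap\triangle_v$ with $v$ on its right, and this segment includes the portion running along $\gamma'$; when the tail wraps back across $\alpha$ as above, the side of this segment containing $v$ is the opposite of the side of $\alpha$ containing $v$, so the induced orientation of the $\alpha$-piece reverses. The two effects cancel and the crossing is in fact positive, but your proof does not see this compensation; pushed through honestly, it would declare such a crossing negative and the lemma false. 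Separately, your third paragraph rests on the unjustified assertion that $\ell^d$ meets $\ell$ exactly at $p$ (if $g(\ell)_d\geq 2$, or if $\ell$ recrosses the center segment, this fails), and "$\gamma$ confined to one side of $\ell$ near $p$" says nothing about crossings of the center segment away from $p$; so the role you assign to compatibility with $\ell$ is not doing any actual work in your argument.
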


\begin{proof}
If $\gamma$ intersects $\ell^{d}$, then all the intersection points lie on the center segment of $\ell^{d}$, thus the assertion holds.
\end{proof}

%%%%%
%%%%%
\section{Proof of Theorem \ref{dense}}\label{Secproofdense}

In this section, we prove Theorem \ref{dense}. The idea of its proof comes from \cite{Yurikusa20}. Fix a $\bullet$-dissection $D$ of $(S,M)$.
Let $g\in\mathbb{Z}^{\vert D\vert }$. By Theorem \ref{lattice points}, there is a $D$-lamination $\mathcal{X}$ such that $g=g(\mathcal{X})=\sum_{\gamma\in\mathcal{X}}g(\gamma)$.
It is sufficient to construct $D$-laminations $\{\mathcal{X}_m\}_{m \in \mathbb{Z}_{\ge 0}}$ consisting only of non-closed $D$-laminates such that
\[
 \mathcal{X}^{\rm nc}\subseteq\mathcal{X}_m\ \text{ and }\ g \in \overline{\bigcup_{m \in \mathbb{Z}_{\ge 0}}C(\mathcal{X}_m)}.
\]
where $\mathcal{X}=\mathcal{X}^{\rm nc}\sqcup\mathcal{X}^{\rm cl}$ is a decomposition such that $\mathcal{X}^{\rm nc}$ (resp., $\mathcal{X}^{\rm cl}$) consists of all non-closed $D$-laminates (resp., closed $D$-laminates) in $\mathcal{X}$.

If $\mathcal{X}^{\rm cl}=\emptyset$, then a family of $\mathcal{X}_m:=\mathcal{X}$ for all $m\in \mathbb{Z}_{>0}$ is the desired one. Assume that $\mathcal{X}^{\rm cl}$ is non-empty.
For $\ell_1 \in \mathcal{X}^{\rm cl}$ and $d_1 \in D$ with $g(\ell_1)_{d_1}>0$, we obtain a non-closed $D$-laminate $\ell_1^{d_1}$ by the construction of Section \ref{construction} for $\mathcal{X}^{\rm nc}$.
By Lemma \ref{pp for elld}, $\ell_1^{d_1}$ is in positive position for every $\ell \in \mathcal{X}^{\rm cl}$.

If the set $\{\ell \in \mathcal{X}^{\rm cl} \mid \ell \cap \ell_1^{d_1}=\emptyset\}$ is non-empty, then we take $\ell_2 \in \{\ell \in \mathcal{X}^{\rm cl} \mid \ell \cap \ell_1^{d_1}=\emptyset\}$ and $d_2 \in D$ with $g(\ell_2)_{d_2}>0$.
By the construction of Section \ref{construction} for $\mathcal{X}^{\rm nc} \sqcup \{\ell_1^{d_1}\}$, we obtain a non-closed $D$-laminate $\ell_2^{d_2}$.
Notice that $\ell_2^{d_2}$ consists of some of segments of $D$-laminates in $\mathcal{X}^{\rm nc}$, the center segment of $\ell_2^{d_2}$ and one of $\ell_1^{d_1}$, where the third type may not appear. 
In the same way as Lemma \ref{pp for elld}, we can show that $\ell_2^{d_2}$ is in positive position for every $\ell \in \mathcal{X}^{\rm cl}$. Repeating this process, we finally get an integer $h \in \{1,\ldots,k=\vert \mathcal{X}^{\rm cl}\vert \}$ and non-closed $D$-laminates $\ell_1^{d_1},\ldots,\ell_h^{d_h}$ such that
\[
 \{\ell \in \mathcal{X}^{\rm cl} \mid \ell \cap \ell_1^{d_1} = \cdots = \ell \cap \ell_h^{d_h} = \emptyset\} = \emptyset.
\]
 Moreover, our construction provides the following properties:
\begin{enumerate}
\item[$\bullet$] $\mathcal{X}^{\rm nc} \cup \{\ell_1^{d_1},\ldots,\ell_h^{d_h}\}$ is a $D$-lamination consisting only of non-closed $D$-laminates;
\item[$\bullet$] For $i \in \{1,\ldots,h\}$, $\ell_i^{d_i}$ is in positive position for every $\ell \in \mathcal{X}^{\rm cl}$.  
\end{enumerate}

%%%
 We set $\mathcal{X}^{\rm cl} = \{\ell_1,\ldots,\ell_h\}\sqcup\{\ell_{h+1},\ldots,\ell_k\}$ and fix the notations $n_j^{(i)} := \#(\ell_i^{d_i} \cap \ell_j)$, $n_j := \sum_{i=1}^h n_j^{(i)}$, and $N := n_1 \cdots n_k$. Set
\[
 \mathsf{T} := \mathsf{T}_{(\ell_1,\ldots,\ell_k)}^{(\frac{N}{n_1},\ldots,\frac{N}{n_k})}.
\]
 By Proposition \ref{Dehn twists at l_1,..., l_k}, 
$\mathsf{T}^m(\ell_i^{d_i})$ are non-closed $D$-laminates for all $m \in \mathbb{Z}_{\geq 0}$ and $i \in \{1,\ldots,h\}$, and we get the equalities
{\setlength\arraycolsep{0.5mm}
\begin{eqnarray*}
 \sum_{i=1}^h g(\mathsf{T}^m(\ell_i^{d_i})) &=& \sum_{i=1}^h g(\ell_i^{d_i}) + m\sum_{i=1}^h\sum_{j=1}^k\frac{N}{n_j}n_j^{(i)}g(\ell_j)\\
    &=& \sum_{i=1}^h g(\ell_i^{d_i}) + mN\sum_{j=1}^k g(\ell_j)\\
    &=& \sum_{i=1}^h g(\ell_i^{d_i}) + mN g(\mathcal{X}^{\rm cl}).
\end{eqnarray*}}
 It gives
\[
 \lim_{m \rightarrow \infty} \frac{\sum_{i=1}^h g(\mathsf{T}^m(\ell_i^{d_i}))}{m} = N g(\mathcal{X}^{\rm cl}).
\]
Then the $D$-lamination
\[
 \mathcal{X}_m := \mathcal{X}^{\rm nc} \cup \{\mathsf{T}^m(\ell_i^{d_i})\}_{i=1}^h
\]
is the desired one because
%which are $D$-laminations consisting only of non-closed $D$-laminates by Proposition \ref{Dehn twists at l_1,..., l_k}(1), and
%we have
\[
 g = g(\mathcal{X}^{\rm nc}) + g(\mathcal{X}^{\rm cl}) \in C(\mathcal{X}^{\rm nc}) + \overline{\bigcup_{m \in \mathbb{Z}_{\ge 0}}C(\{\mathsf{T}^m(\ell_i^{d_i})\}_{i=1}^h)} \subseteq \overline{\bigcup_{m \in \mathbb{Z}_{\ge 0}}C(\mathcal{X}_m)}.
\]

%%%
%%%
%%%
\section{Representation theory} \label{Secrep}

In this section, we study the algebraic aspects of our results.
We can see their examples in the next section.
Throughout, let $R:=k[[t]]$ be the formal power series ring in one valuable over $k$. 
Then $R$ has the unique maximal ideal $\mathfrak{m}:=tR$. 

\subsection{Two-term silting complexes for module-finite algebras}
Let $A$ be a basic $R$-algebra which is module-finite (i.e., $A$ is finitely generated as an $R$-module). 
We denote by $\proj A$ the category of finitely generated projective right $A$-modules, 
by $\Kb(\proj A)$ the homotopy category of bounded complexes of $\proj A$. In particular, $\Kb(\proj A)$ is an $R$-linear category and $\Hom_{\Kb(\proj A)}(X,Y)$ is a finitely generated $R$-module for any $X,Y\in \Kb(\proj A)$. 

We begin with the following observation. 

\begin{proposition}\label{Kb KS}
   The category $\Kb(\proj A)$ is a Krull-Schmidt triangulated category.
\end{proposition}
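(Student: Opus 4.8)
The plan is to verify the two standard conditions that together characterise Krull-Schmidt additive categories: that idempotents split, and that the endomorphism ring $\End_{\mathsf{K}^{\rm b}(\proj A)}(X)$ of every object $X$ is semiperfect. Once both hold, one writes $1_X$ as a sum of orthogonal primitive idempotents of the semiperfect ring $\End_{\mathsf{K}^{\rm b}(\proj A)}(X)$, splits each of them using idempotent completeness, and obtains a decomposition $X \cong \bigoplus_i X_i$ in which each $\End_{\mathsf{K}^{\rm b}(\proj A)}(X_i)$ is local; this is exactly the Krull-Schmidt property.

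The semiperfectness is where the completeness of $R=k[[t]]$ enters decisively. Since $\Hom_{\mathsf{K}^{\rm b}(\proj A)}(X,Y)$ is a finitely generated $R$-module, the ring $\Lambda:=\End_{\mathsf{K}^{\rm b}(\proj A)}(X)$ is a module-finite $R$-algebra. Such an algebra is complete in the $\mathfrak{m}$-adic topology for $\mathfrak{m}=(t)$, the quotient $\Lambda/\mathfrak{m}\Lambda$ is a finite-dimensional $k$-algebra and hence semiperfect, and idempotents lift along the complete ideal $\mathfrak{m}\Lambda\subseteq\operatorname{rad}\Lambda$. Therefore $\Lambda$ is semiperfect; I would record the analogous statement for $A$ itself, so that $\proj A$ is already Krull-Schmidt.

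The remaining, and principal, point is that idempotents split in $\mathsf{K}^{\rm b}(\proj A)$. I would deduce this by passing to the unbounded derived category $\mathsf{D}(\mathrm{Mod}\,A)$, which is compactly generated and possesses arbitrary coproducts, so that idempotents split in it by the B\"okstedt--Neeman argument. Under the fully faithful embedding $\mathsf{K}^{\rm b}(\proj A)\hookrightarrow \mathsf{D}(\mathrm{Mod}\,A)$ the image is the thick subcategory of compact objects, namely the perfect complexes, which is generated by $A$. Given an idempotent $e$ on $X$, its splitting object $Y$ in $\mathsf{D}(\mathrm{Mod}\,A)$ is a direct summand of the compact object $X$, hence compact, hence again perfect; thus $Y$ lies in $\mathsf{K}^{\rm b}(\proj A)$ and $e$ splits there.

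I expect the identification of $\mathsf{K}^{\rm b}(\proj A)$ with the compact objects of $\mathsf{D}(\mathrm{Mod}\,A)$, together with the fact that a direct summand of a compact object is compact, to carry the technical weight; once idempotent completeness is in hand the Krull-Schmidt conclusion is formal. The only genuinely arithmetic input is the completeness of $R$, used solely to guarantee that the finite $R$-algebras arising as endomorphism rings are semiperfect.
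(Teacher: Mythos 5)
Your proof is correct, and its core coincides with the paper's: both rest on the observation that $\End_{\mathsf{K}^{\rm b}(\proj A)}(X)$ is a module-finite algebra over the complete local noetherian ring $R=k[[t]]$, hence semiperfect (the paper simply cites Curtis--Reiner for this and immediately concludes Krull--Schmidt). Where you go beyond the paper is in explicitly verifying the second ingredient of the Krull--Schmidt characterisation, namely that idempotents split in $\mathsf{K}^{\rm b}(\proj A)$; you obtain this by identifying $\mathsf{K}^{\rm b}(\proj A)$ with the compact (perfect) objects of $\mathsf{D}(\mathrm{Mod}\,A)$ and invoking the B\"okstedt--Neeman splitting of idempotents in a triangulated category with coproducts, together with the fact that summands of compact objects are compact. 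This step is not cosmetic: semiperfect endomorphism rings alone do not force the Krull--Schmidt property (for instance, the category of finitely generated free modules over $k\times k$ has semiperfect endomorphism rings everywhere, yet is not Krull--Schmidt because idempotents fail to split), so the paper's one-line deduction tacitly relies on exactly the idempotent-completeness fact you prove. In short, your argument is a completed version of the paper's: same decisive use of the completeness of $R$, plus the standard derived-category argument that makes the final implication ``semiperfect endomorphism rings $\Rightarrow$ Krull--Schmidt'' legitimate.
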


\begin{proof}
For any $X \in \Kb(\proj A)$, $E=\End_{\Kb(\proj A)}(X)$ is a module-finite algebra over the complete local noetherian ring $R$. Therefore, $E$ is semiperfect by \cite[p.132]{CR} and hence $\Kb(\proj A)$ is Krull-Schmidt.
\end{proof}   

Now, we study two-term silting theory for a module-finite $R$-algebra $A$. We refer to \cite{AIR,Aihara13,DIJ} for two-term silting theory of finite dimensional algebras, and to \cite{Kimura} for that of module-finite algebras.

\begin{definition}
   Let $P=(P^i, f^i)$ be a complex in $\Kb(\proj A)$.  
   \begin{enumerate}
   \item We say that $P$ is {\it two-term} if $P^i=0$ for any integer $i\neq 0, -1$.
   \item We say that $P$ is {\it presilting} if $\Hom_{\Kb(\proj A)}(P, P[m])=0$ for any positive integer $m$.  
   \item We say that $P$ is {\it silting} if it is presilting and $\thick P = \Kb(\proj A)$, where 
   $\thick P$ is the smallest triangulated subcategory of $\Kb(\proj A)$ which contains $P$ and is closed under taking direct summands.
   \end{enumerate}
\end{definition}

We denote by $\twoips A$ (resp., $\twopresilt A$, $\twosilt A$) the set of isomorphism classes of indecomposable two-term presilting (resp., basic two-term presilting, basic two-term silting) complexes for $A$.
Here, we say that a complex $P$ is {\it basic} if all indecomposable direct summands of $P$ are pairwise non-isomorphic. 
We denote by $\vert P\vert $ the number of non-isomorphic indecomposable direct summands of $P$ and by $\add P$ the category of all direct summands of finite direct sums of copies of $P$.

Let $A=\bigoplus_{i=1}^{\vert A\vert } P_i$ be a decomposition of $A$, where $P_i$ is an indecomposable projective $A$-module. 

\begin{definition}
Let $P=[P^{-1}\overset{f}{\to} P^0]$ be a two-term complex in $\Kb(\proj A)$.
\begin{enumerate}
   \item The {\it $g$-vector} of $P$ is defined by 
   $$g(P):=(m_1-n_1, \ldots, m_{\vert A\vert }-n_{\vert A\vert }) \in \mathbb{Z}^{\vert A\vert }$$ 
   where $m_i$ (resp., $n_i$) is the multiplicity of $P_i$ as indecomposable direct summands of $P^0$ (resp., $P^{-1}$). 
   \item The {\it $g$-vector cone} $C(P)$ is defined to be a cone in $\mathbb{R}^{\vert A\vert }$ spanned by $g$-vectors of all indecomposable direct summands of $P$, that is,  
\[
 C(P) := \biggl\{\sum_{X} a_X g(X) \mid a_X \in \mathbb{R}_{\ge0}\biggr\} \subseteq \mathbb{R}^{\vert A\vert },
\]
where $X$ runs over all indecomposable direct summands of $P$. 
\end{enumerate} 
We denote by $\mathcal{F}(A)$ a collection of $g$-vector cones of all basic two-term presilting complexes for $A$. 
\end{definition} 

The Grothendieck group of $\Kb(\proj A)$, which is naturally identified with the Grothendieck group $K_0(\proj A)$ of $\proj A$, is a free abelian group with basis given by $P_1,\ldots,P_{\vert A\vert }$. The $g$-vector of $P$ is identified with the corresponding element $[P]$ in $K_0(\proj A)$ with respect to the free basis $[P_1],\ldots,[P_{\vert A\vert }]$.
The following are basic properties of two-term presilting complexes 
(see \cite{AiI,Kimura}).  

\begin{proposition} \cite[Theorem 2.27]{AiI}
\begin{enumerate}
\item For any silting complex $P$ for $A$, the set of isomorphism classes of indecomposable direct summands of $P$ gives a free basis of $K_0(\proj A)$. In particular, $\vert P\vert =\vert A\vert $ holds.
\item For any presilting complex $P$ for $A$ which is a direct
summand of a silting complex, we have $\vert P\vert  \leq \vert A\vert $. In
particular, it is silting if and only if $\vert P\vert =\vert A\vert $ holds.
\end{enumerate}
\end{proposition}

\begin{proposition}\label{basis}
Let $P=[P^{-1}\overset{f}{\to} P^0]\in\twopresilt A$. Then the following hold:
\begin{enumerate}
\item\cite[Example 2.3]{Kimura} $P$ is a direct summand of a two-term silting complex. In particular, it is silting if and only if $\vert P\vert =\vert A\vert $.
\item\cite[Lemma 2.9]{Kimura} $\add P^0 \cap \add P^{-1} = 0$.
\end{enumerate}
\end{proposition}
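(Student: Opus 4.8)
The plan is to treat $(1)$--$(3)$ as the module-finite analogues of the standard facts of two-term silting theory, carrying out each argument over $R=k[[t]]$ and using throughout that $\mathsf{K}^{\rm b}(\proj A)$ is Krull--Schmidt (Proposition \ref{Kb KS}) together with the finite generation of every $\Hom$-space as an $R$-module. I would prove $(3)$ first, as it is self-contained, then obtain $(1)$ by a Bongartz-type completion, and finally deduce $(2)$ from $(1)$ and a rank count in the Grothendieck group.

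For $(3)$, I would replace $P$ by a minimal representative, so that $f$ has image in $\operatorname{rad}P^0$. A direct homotopy computation identifies $\Hom_{\mathsf{K}^{\rm b}(\proj A)}(P,P[1])$ with the cokernel of
\[
 \End(P^{-1})\oplus\End(P^0)\longrightarrow\Hom(P^{-1},P^0),\qquad (a,b)\longmapsto bf-fa .
\]
Because $\operatorname{rad}$ is an ideal and $f$ is a radical map, every element $bf-fa$ lies in $\operatorname{rad}\Hom(P^{-1},P^0)$. If some nonzero indecomposable $Q$ belonged to $\add P^0\cap\add P^{-1}$, the composite $P^{-1}\twoheadrightarrow Q\hookrightarrow P^0$ would be non-radical (its $Q\to Q$ component is an isomorphism), hence could not be a boundary and would give a nonzero class in $\Hom_{\mathsf{K}^{\rm b}(\proj A)}(P,P[1])$, contradicting presilting. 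Thus $\add P^0\cap\add P^{-1}=0$.

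For $(1)$, I would use that $A$ is itself a two-term silting complex and Bongartz-complete $P$ against it. Since $R$ is Noetherian and $\Hom_{\mathsf{K}^{\rm b}(\proj A)}(P,A[1])$ is a finitely generated $R$-module, choose generators $g_1,\dots,g_r$ and form the triangle
\[
 A\longrightarrow E\longrightarrow P^{\oplus r}\xrightarrow{\ (g_1,\dots,g_r)\ }A[1].
\]
A degreewise inspection shows the cone $E=[\,P^{-1}\to P^0\oplus A\,]$ is again two-term. One then checks that $T:=P\oplus E$ is presilting, the universal property of the extension forcing $\Hom_{\mathsf{K}^{\rm b}(\proj A)}(P,E[1])$, $\Hom_{\mathsf{K}^{\rm b}(\proj A)}(E,P[1])$ and $\Hom_{\mathsf{K}^{\rm b}(\proj A)}(E,E[1])$ to vanish, and that $\thick T=\mathsf{K}^{\rm b}(\proj A)$ since $A\in\thick T$ by the triangle. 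Passing to the basic complex with the same additive closure exhibits $P$ as a direct summand of a basic two-term silting complex.

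Finally, for $(2)$, the forward implication follows from the standard fact that the $g$-vectors of the indecomposable summands of a silting complex form a $\mathbb{Z}$-basis of $K_0(\mathsf{K}^{\rm b}(\proj A))\cong\mathbb{Z}^{|A|}$, whence $|T|=|A|$ for every basic two-term silting $T$; in particular a silting $P$ has $|P|=|A|$. Conversely, if $|P|=|A|$, then the completion $T\supseteq P$ from $(1)$ satisfies $|T|=|A|=|P|$, so $T$ and the summand $P$ are basic with equally many indecomposable summands and therefore coincide, proving $P$ silting. The step I expect to be the main obstacle is $(1)$: in contrast to the finite-dimensional situation the universal extension must be controlled over $R$ rather than a field, and the decisive inputs are exactly the finite generation of $\Hom_{\mathsf{K}^{\rm b}(\proj A)}(P,A[1])$ over the Noetherian ring $R$ and the Krull--Schmidt property, which together ensure that the extension is finite and that the completed presilting complex is genuinely silting.
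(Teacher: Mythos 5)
The paper does not actually prove this proposition---it is quoted directly from \cite{Kimura}---so there is no internal argument to compare against; your reconstruction is correct and is, in substance, the standard proof that the citation points to. Your part (3) (pass to a minimal representative so that $f$ is radical, note that every boundary $bf-fa$ is then a radical map while a split epi--split mono composite $P^{-1}\twoheadrightarrow Q\hookrightarrow P^0$ is not) and part (2) (rank count in $K_0(\mathsf{K}^{\rm b}(\proj A))\cong\mathbb{Z}^{|A|}$ for the forward direction, Krull--Schmidt plus part (1) for the converse) are exactly right, and part (1) correctly isolates the one genuinely new point in the module-finite setting: the Bongartz triangle is built from a finite set of \emph{$R$-module generators} of $\Hom_{\mathsf{K}^{\rm b}(\proj A)}(P,A[1])$ rather than a $k$-basis, which exists because $R$ is Noetherian and all Hom-spaces here are finitely generated over $R$. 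Two small corrections that do not affect validity: the cone should be $E=[(P^{-1})^{\oplus r}\to A\oplus (P^{0})^{\oplus r}]$ (your displayed $E$ drops the multiplicity $r$), and the three vanishings $\Hom(P,E[1])=\Hom(E,P[1])=\Hom(E,E[1])=0$ are not forced by a ``universal property'' in one stroke---the first uses surjectivity of $\Hom(P,g)$, the second uses $\Hom(A,P[1])\cong H^{1}(P)=0$, and the third needs the factorization of any map $P^{\oplus r}\to A[1]$ through $g$ together with $u[1]\circ g=0$ in the triangle; all three are routine chases. Finally, the ``standard fact'' you invoke in (2), that the indecomposable summands of a silting object give a $\mathbb{Z}$-basis of $K_0$, is Aihara--Iyama's theorem for Krull--Schmidt triangulated categories, so it is legitimately available here thanks to Proposition \ref{Kb KS}.
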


Now, we consider a collection $\mathcal{F}(A)$ of $g$-vector cones, which is shown to be a non-singular fan in Proposition \ref{prop fan}. We focus on a realization of $\mathcal{F}(A)$ in $\mathbb{R}^{\vert A\vert }$. Namely, let 
\begin{equation}\nonumber
    \vert \mathcal{F}(A)\vert  := \bigcup_{P \in \twopresilt A} C(P) = \bigcup_{P \in \twosilt A} C(P) \ \subseteq\  \mathbb{R}^{\vert A\vert }.
\end{equation}
We say that $A$ is \emph{$g$-finite} if the set $\twosilt A$ is finite, in other words, $\mathcal{F}(A)$ is finite. Inspired from the result in Theorem \ref{def:g-finite}, we define the $g$-tameness of algebras as follows.

\begin{definition}
Let $A$ be a module-finite $R$-algebra, where $R:=k[[t]]$. Then, we say that $A$ is \emph{$g$-tame} if it satisfies 
\begin{equation}
    \overline{\vert \mathcal{F}(A)\vert } = \mathbb{R}^{\vert A\vert }, 
\end{equation}
where $\overline{(-)}$ is the closure with respect to the natural topology on $\mathbb{R}^{\vert A\vert }$.
\end{definition}

Let $I$ be a two-sided ideal in $A$ and $B:=A/I$. In particular, $B$ is also module-finite over $R$. The functor $-\otimes_A B \colon \proj A \to \proj B$ induces a triangle functor 
\[
    \overline{(-)}:= -\otimes_A B \colon \Kb(\proj A) \to \Kb(\proj B).
\]

\begin{proposition}\cite[Lemma 2.6]{Kimura}\label{factor-twosilt}
If $P$ is a two-term presilting complex for $A$, then $\overline{P}$ is a two-term presilting complex for $B$.
\end{proposition}

\begin{corollary}\label{factor-g-tame}
If $A$ is g-tame, so is $B$.
\end{corollary}

\begin{proof}
We consider $g$-vectors as elements of the Grothendieck groups. The functor $\overline{(-)}$ naturally induces a continuous surjective map from $K_0(\proj A)\otimes_{\mathbb{Z}}\mathbb{R}$ to $K_0(\proj B)\otimes_{\mathbb{Z}}\mathbb{R}$. By Proposition \ref{factor-twosilt}, the image of $\vert \mathcal{F}(A)\vert $ in $K_0(\proj B)\otimes_{\mathbb{Z}}\mathbb{R}$ is contained in $\vert \mathcal{F}(B)\vert $. On the other hand, since $\vert \mathcal{F}(A)\vert $ is dense in $K_0(\proj A)\otimes_{\mathbb{Z}}\mathbb{R}$, its image is dense in $K_0(\proj B)\otimes_{\mathbb{Z}}\mathbb{R}$. Thus $\vert \mathcal{F}(B)\vert $ is also dense in $K_0(\proj B)\otimes_{\mathbb{Z}}\mathbb{R}$. It finishes a proof.
\end{proof}

Finally, we consider the case that $I\subseteq \mathfrak{m}A$, where we recall that $\mathfrak{m}=tR$ is the unique maximal ideal in $R$. 
Then, a canonical surjection $A\to B=A/I$ provides a decomposition $B=\bigoplus_{i=1}^{\vert A\vert } \overline{P_i}$, where $\overline{P_i}$ is an indecomposable projective $B$-module corresponding to $P_i$ for every $i\in \{1,\ldots,\vert A\vert \}$. Thus, it gives a free basis $[\overline{P_1}],\ldots,[\overline{P_{\vert A\vert }}]$ of the Grothendieck group $K_0(\proj B)$, and $g(P)=g(\overline{P})$ holds for any two-term complex $P$ in $\Kb(\proj A)$ with respect to this basis. We get the following bijections (see also \cite[Corollary 5.21]{DIRRT}, \cite[Theorem 1]{EJR}, and \cite[Lemma 2.6]{vG21})

\begin{proposition}\cite[Proposition 4.2 and Theorem 4.3]{Kimura}\label{central nilpotent}
In the above, we assume that $I$ is a two-sided ideal contained in $\mathfrak{m}A$. 
Then, for two-term complexes $P$ and $Q$ in $\Kb(\proj A)$, the following hold:
\begin{enumerate}
   \item $\Hom_{\Kb(\proj A)}(P,Q[1])=0$ if and only if $\Hom_{\Kb(\proj
B)}(\overline{P},\overline{Q}[1])=0$.
   \item If $P$ is presilting, then $\overline{P}$ is presilting and
$\vert P\vert =\vert \overline{P}\vert $.
\end{enumerate}
Therefore, the correspondence $P \mapsto \overline{P}$ induces
bijections
\[
   \twopresilt A \to \twopresilt B \quad \text{and} \quad \twosilt A \to \twosilt B. 
\]
In particular, we have $\mathcal{F}(A)=\mathcal{F}(B)$. 
\end{proposition}

\begin{proposition}\label{prop fan}
Let $A$ be a module-finite $R$-algebra. 
Then $\mathcal{F}(A)$ is a non-singular fan whose maximal faces correspond to basic two-term silting complexes for $A$.
\end{proposition}

\begin{proof}
By \cite[Corollary 6.7]{DIJ}, the statement holds for any finite dimensional $k$-algebra. In particular, it holds true for $B:=A/\mathfrak{m}A$ since it is finite dimensional. Applying Proposition \ref{central nilpotent} as $I:=\mathfrak{m}A$, we have $\mathcal{F}(A)=\mathcal{F}(B)$ and the assertion for $A$.
\end{proof}

%%%
%%%
\subsection{Complete special biserial algebras}\label{secgentle}

We define complete special biserial algebras and complete gentle algebras. 
For a given finite connected quiver $Q$, let $\widehat{kQ}$ be the complete path algebra, that is, the completion of a path algebra $kQ$ of $Q$ with respect to $kQ_+$-adic topology, where $kQ_+$ is the arrow ideal. For arrows $\alpha$ and $\beta$, we denote by $s(\alpha)$ and $t(\alpha)$ the starting point and the terminal point of $\alpha$, respectively.
Also we write $\alpha\beta$ for the path from $s(\alpha)$ to $t(\beta)$.

\begin{definition}\label{Def gentle}
Let $Q$ be a finite connected quiver and $I$ an ideal in the path algebra $kQ$ of $Q$.
We say that $\widehat{kQ}/\overline{I}$ is a \textit{complete special biserial algebra}, where $\overline{I}$ is the closure of $I$, if all the following conditions are satisfied: 
\begin{enumerate}
\item[(SB1)] For each vertex $i$ of $Q$, there are at most two arrows starting at $i$ and there are at most two arrows ending at $i$.
\item[(SB2)] For every arrow $\alpha$ in $Q$, there exists at most one arrow $\beta$ such that $t(\alpha)=s(\beta)$ and $\alpha\beta\notin I$. 
\item[(SB3)] For every arrow $\alpha$ in $Q$, there exists at most one arrow $\gamma$ such that $s(\alpha)=t(\gamma)$ and $\gamma\alpha\notin I$. 
\end{enumerate}
It is called \textit{complete gentle algebra} if in addition:
\begin{enumerate}
\item[(SB4)] For every arrow $\alpha$ in $Q$, there exists at most one arrow $\beta$ such that $t(\alpha)=s(\beta)$ and $\alpha\beta\in I$. 
\item[(SB5)] For every arrow $\alpha$ in $Q$, there exists at most one arrow $\gamma$ such that $s(\alpha)=t(\gamma)$ and $\gamma\alpha\in I$. 
\item[(SB6)] The ideal $I$ is generated by paths of length $2$.  
\end{enumerate}
\end{definition}

Here, we do not assume that complete special biserial algebras are finite dimensional. 
Notice that finite dimensional special biserial (resp., gentle) algebras are complete special biserial (resp., gentle) algebras. 
The following observation is basic. 

\begin{proposition} \label{SB-gentle}
Complete special biserial algebras are precisely factor algebras of complete gentle algebras. 
\end{proposition}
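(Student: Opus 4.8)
The plan is to prove both inclusions separately. For the easier direction, suppose $A = \widehat{kQ}/\overline{I}$ is a complete gentle algebra and $J$ is any ideal; I must verify that $A/J$ is a complete special biserial algebra. The conditions (SB1)--(SB3) are intrinsic to the quiver $Q$ and to which length-two paths avoid the defining relations. Passing to a factor algebra can only enlarge the ideal of relations (it sends more paths to zero), so any bound of the form ``at most one arrow $\beta$ with $\alpha\beta \notin I$'' is preserved: if there was at most one such $\beta$ modulo $\overline{I}$, there is still at most one modulo the larger ideal. Thus (SB1)--(SB3) pass to factors automatically, and since (SB4)--(SB6) are \emph{not} required of a special biserial algebra, $A/J$ is complete special biserial. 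Strictly speaking one should also confirm that $A/J$ is again of the form $\widehat{kQ'}/\overline{I'}$; here $Q' = Q$ and the new relation ideal is the preimage of $J$, which is closed, so the presentation has the required shape.

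For the harder direction, let $B = \widehat{kQ}/\overline{I}$ be an arbitrary complete special biserial algebra; I must exhibit a complete gentle algebra $A$ and a surjection $A \twoheadrightarrow B$. The standard idea is to keep the quiver $Q$ fixed and replace the relation ideal $\overline{I}$ by a \emph{smaller} gentle ideal $\overline{I_0} \subseteq \overline{I}$, so that $A = \widehat{kQ}/\overline{I_0}$ surjects onto $B = A/(\overline{I}/\overline{I_0})$. The key step is to construct $I_0$ as an ideal generated by length-two paths satisfying the gentle axioms (SB4)--(SB6) while still respecting (SB1)--(SB3). Concretely, for each arrow $\alpha$ the special biserial conditions guarantee that among the (at most two) arrows $\beta$ with $t(\alpha)=s(\beta)$, at most one has $\alpha\beta \notin \overline{I}$; I must then choose the ``forbidden'' length-two paths for $I_0$ so that each arrow $\alpha$ is followed by at most one zero-relation \emph{and} at most one nonzero continuation, and similarly for predecessors. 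The natural recipe is: take $I_0$ to be generated by exactly those length-two paths $\alpha\beta$ (with $\beta$ the unique nonzero-continuation candidate excluded) that are needed to enforce the gentle dichotomy, discarding any relations in $I$ that are longer than two or that violate the ``at most one'' gentle count.

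The main obstacle is the combinatorial bookkeeping in the previous paragraph: one must check that this choice of $I_0$ can always be made consistently at every vertex and every arrow, i.e. that the local gentle conditions (SB4)--(SB5) can be satisfied simultaneously without conflict, and that the resulting $I_0$ genuinely lies inside $\overline{I}$ so that the surjection exists. Because (SB2)--(SB3) already cap the number of nonzero continuations and predecessors at one, the two remaining arrows at each branching vertex can be partitioned into ``continues nonzero'' and ``becomes a relation'', and setting $I_0$ to impose zero on precisely the latter paths yields a gentle presentation. Since each generator of $I_0$ is a path of length two that is already zero in $B$ (being among the relations captured by the special biserial structure), we get $\overline{I_0} \subseteq \overline{I}$ and hence the desired factor map. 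I would also remark that completeness (module-finiteness over $R=k[[t]]$) is inherited because the construction keeps $Q$ unchanged and only alters finitely many length-two relations, so the Krull--Schmidt and two-term silting framework of Section~\ref{Secrep} applies to $A$ as well.
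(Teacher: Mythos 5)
Your route is the one the paper intends (its own proof is literally ``It is immediate from the definition''): keep the quiver $Q$ fixed, observe that enlarging the ideal preserves (SB1)--(SB3) for the easy direction, and for the hard direction shrink $\overline{I}$ to a gentle ideal $\overline{I_0}\subseteq\overline{I}$ generated by length-two paths. The easy direction is fine at the paper's level of rigor (though note that ``the preimage of $J$ is closed'' does not by itself show that this preimage is the \emph{closure of an ideal of $kQ$}, which is what Definition \ref{Def gentle} formally requires; the paper glosses over this too, so I will not press it).

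The genuine gap is at the step you yourself call ``the main obstacle,'' which you then settle by assertion rather than by argument. The claim that ``the two remaining arrows at each branching vertex can be partitioned into continues-nonzero and becomes-a-relation, and setting $I_0$ to impose zero on precisely the latter paths yields a gentle presentation'' does not follow from (SB2)--(SB3) capping the counts: when an arrow has \emph{both} of its length-two continuations in $I$, the partition is not determined by $I$, a choice must be made, and the per-arrow choices interact through (SB5) at the common vertex. Concretely, take a vertex with incoming arrows $\alpha_1,\alpha_2$ and outgoing arrows $\beta_1,\beta_2$, and suppose $\alpha_1\beta_1,\alpha_1\beta_2,\alpha_2\beta_1\in I$ while $\alpha_2\beta_2\notin I$; this configuration satisfies (SB1)--(SB3). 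The locally legal choice ``put $\alpha_1\beta_1$ into $I_0$'' is fatal: for $\alpha_2$ you are forced to put $\alpha_2\beta_1$ into $I_0$ (since $\alpha_2\beta_2\notin I$ cannot be a generator of $I_0\subseteq I$), and then $\beta_1$ has two incoming relations, violating (SB5); the only consistent choice is $\{\alpha_1\beta_2,\alpha_2\beta_1\}\subseteq I_0$ with $\alpha_1\beta_1\notin I_0$. What your proof is missing is the short lemma that makes a consistent choice always possible: at a vertex with two incoming and two outgoing arrows, (SB2)--(SB3) say that each $\alpha_i$ and each $\beta_j$ has at least one of its two paths in $I$, and then at least one of the two ``diagonals'' $\{\alpha_1\beta_1,\alpha_2\beta_2\}$ and $\{\alpha_1\beta_2,\alpha_2\beta_1\}$ lies entirely in $I$ (if, say, $\alpha_1\beta_1\notin I$, then $\alpha_1\beta_2\in I$ by the row condition and $\alpha_2\beta_1\in I$ by the column condition); put that diagonal into $I_0$, make the analogous forced choice at vertices with two incoming and one outgoing arrow (or vice versa), and take nothing else. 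Since these choices at distinct vertices concern disjoint sets of paths, the resulting $I_0$ satisfies (SB1)--(SB6) and $I_0\subseteq I$. Without this lemma your recipe, read literally, fails on the example above; with it, the rest of your argument---and hence the proposition---goes through.
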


\begin{proof}
It is immediate from the definition. 
\end{proof}

Therefore, to prove Theorem \ref{comp SB gtame}, it suffices to show the $g$-tameness of complete gentle algebras by Corollary \ref{factor-g-tame}.

\subsection{Gentle algebras from dissections}
It is known in \cite[Theorem 4.10]{PPP} that complete gentle algebras are precisely obtained by the following construction. 
%A similar construction is developed in \cite{Schroll15}

\begin{definition}\label{def QD}
For a $\bullet$-dissection $D$ of $(S,M)$, 
we define a quiver $Q(D)$ and an ideal $I(D)$ in $kQ(D)$ as follows: 
\begin{enumerate}
\item[$\bullet$] The set of vertices of $Q(D)$ corresponds bijectively with $D$.
\item[$\bullet$] The set of arrows of $Q(D)$ is a disjoint union of sets of arrows in $C_v$ for all $v\in M_{\circ}$ defined as follows (see Figure \ref{Fig dis-quiver}):
\begin{enumerate}
\item[--] If $v$ is a puncture and $d_1, \ldots, d_m \in D$ are sides of $\triangle_v$ in counterclockwise order, then there is a cycle $C_v \colon d_1 \overset{a_1}{\to} d_2 \overset{a_2}{\to} \cdots \overset{a_{m-1}}{\to} d_m \overset{a_m}{\to} d_1$ in $Q(D)$, which is uniquely determined up to cyclic permutation. 
\item[--] If $v$ lies on a boundary segment, and $d_1,\ldots, d_m \in D$ are sides of $\triangle_v$ in counterclockwise order, then there is a path $C_v \colon d_1\overset{a_1}{\to} d_2 \cdots \overset{a_{m-1}}{\to} d_m$ in $Q(D)$.
\end{enumerate}
\item[$\bullet$] $I(D)$ is generated by all paths of length $2$ which is not a sub-path of any $C_v$.
\end{enumerate}
We denote $A(D):=\widehat{kQ(D)}/\overline{I(D)}$. 
\end{definition}

%%%%%%%%%% Figure
\begin{figure}[htp]
\begin{tikzpicture}[baseline=0mm,scale=1.3]
\coordinate(l)at(-150.5:2); \coordinate(lu)at(170.5:1.8); \coordinate(ru)at(10:1.8);
\coordinate(r)at(-30:2); \coordinate(ld)at(-115:2); \coordinate(rd)at(-65:2);
\draw(ru)--node(3)[fill=white,inner sep=2]{$d_3$}(r)--node(2)[fill=white,inner sep=2]{$d_2$}(rd)--node(1)[fill=white,inner sep=2]{$d_1$}(ld)--node(m)[fill=white,inner sep=2]{$d_m$}(l)--node(m-1)[fill=white,inner sep=2]{$d_{m-1}$}(lu);
\fill(l)circle(0.8mm); \fill(lu)circle(0.8mm); \fill(ru)circle(0.8mm);
\fill(r)circle(0.8mm); \fill(ld)circle(0.8mm); \fill(rd)circle(0.8mm);
\draw(0,-0.1)circle(1mm)node[right=1mm]{$v$};
\draw[->](m-1)..controls(-160:1)and(-140:1)..node[fill=white,inner sep=2]{$a_{m-1}$}(m); 
\draw[->](m)..controls(-125:1)and(-105:1)..node[fill=white,inner sep=2]{$a_m$}(1);
\draw[->](1)..controls(-75:1)and(-55:1)..node[fill=white,inner sep=2]{$a_1$}(2); 
\draw[->](2)..controls(-40:1)and(-20:1)..node[fill=white,inner sep=2]{$a_2$}(3);
\draw[->](150:1.5)..controls(155:1)and(175:1)..node[fill=white,inner sep=2]{$a_{m-2}$}(m-1); 
\draw[->](3)..controls(5:1)and(25:1)..node[fill=white,inner sep=2]{$a_3$}(30:1.5);
\draw[dotted](lu)..controls(135:2)and(45:2)..(ru);
\end{tikzpicture}
 \hspace{7mm}
\begin{tikzpicture}[baseline=0mm,scale=1.3]
\coordinate(l)at(-150.5:2); \coordinate(lu)at(170.5:1.8); \coordinate(ru)at(10:1.8);
\coordinate(r)at(-30:2); \coordinate(ld)at(-115:2); \coordinate(rd)at(-65:2);
\draw(ru)--node(3)[fill=white,inner sep=2]{$d_2$}(r)--node(2)[fill=white,inner sep=2]{$d_1$}(rd)--(ld)--node(m)[fill=white,inner sep=2]{$d_m$}(l)--node(m-1)[fill=white,inner sep=2]{$d_{m-1}$}(lu);
\fill[pattern=north east lines](ld)--(-1.2,-2.1)--(1.2,-2.1)--(rd);
\fill(l)circle(0.8mm); \fill(lu)circle(0.8mm); \fill(ru)circle(0.8mm);
\fill(r)circle(0.8mm); \fill(ld)circle(0.8mm); \fill(rd)circle(0.8mm);
\draw[->](m-1)..controls(-160:1)and(-140:1)..node[fill=white,inner sep=2]{$a_{m-1}$}(m); 
\draw[->](2)..controls(-40:1)and(-20:1)..node[fill=white,inner sep=2]{$a_1$}(3);
\draw[->](150:1.5)..controls(155:1)and(175:1)..node[fill=white,inner sep=2]{$a_{m-2}$}(m-1); 
\draw[->](3)..controls(5:1)and(25:1)..node[fill=white,inner sep=2]{$a_2$}(30:1.5);
\draw[dotted](lu)..controls(135:2)and(45:2)..(ru);
\draw[fill=white]($(ld)!0.5!(rd)$)circle(1mm); \node at(-85:1.5){$v$};
\end{tikzpicture}
   \caption{Sub-quiver of $Q(D)$ in $\triangle_v$}
   \label{Fig dis-quiver}
\end{figure}
%%%%%%%%%% Figure

\begin{proposition} \cite[Theorem 4.10]{PPP} \label{loc. gentle}
For a $\bullet$-dissection $D$ of $(S,M)$, the algebra $A(D)$ is a complete gentle algebra, and any complete gentle algebra arises in this way.  
\end{proposition}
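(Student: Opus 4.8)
The plan is to prove the two assertions of the proposition separately: that $A(D)$ always satisfies the axioms (SB1)--(SB6) of a complete gentle algebra, and conversely that every complete gentle algebra is isomorphic to some $A(D)$. The forward direction reduces to a purely local analysis. A vertex of $Q(D)$ is a $\bullet$-arc $d\in D$, which by Proposition \ref{poly} has each of its two sides lying in a polygon $\triangle_v$ carrying a single marked point $v\in M_\circ$. In the path or cycle $C_v$ attached to such a polygon the arrows are exactly the consecutive turns $d_{i-1}\to d_i\to d_{i+1}$ between its sides, so each side of $d$ contributes at most one incoming and at most one outgoing arrow; summing over the two sides gives at most two arrows into and out of $d$, which is (SB1). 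By construction the paths avoiding $I(D)$ are precisely the subpaths of the various $C_v$, and each arrow lies on a unique $C_v$; hence $\alpha$ has a unique continuation $\beta$ with $\alpha\beta\notin I(D)$, namely the next arrow of that $C_v$, which is (SB2), while any remaining arrow out of $t(\alpha)$ lies in the other adjacent polygon and yields the unique relation $\alpha\beta\in I(D)$, giving (SB4). Reversing the roles of source and target gives (SB3) and (SB5), and (SB6) holds by definition of $I(D)$.

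For the converse I would reconstruct $(S,M)$ and $D$ from a given complete gentle algebra $A=\widehat{kQ}/\overline{I}$ by reversing this dictionary. Conditions (SB1), (SB2) and (SB4) say that among the arrows $\beta$ with $s(\beta)=t(\alpha)$ there is at most one with $\alpha\beta\notin I$ and at most one with $\alpha\beta\in I$, which lets me define a partial \emph{successor} map sending $\alpha$ to the unique $\beta$ with $\alpha\beta\notin I$. Its orbits are the maximal allowed threads: a finite thread $d_1\to\cdots\to d_m$ is declared a boundary polygon $\triangle_v$ with $v\in M_\circ\cap\partial S$, and a cyclic thread a punctured polygon with $v$ a puncture in $M_\circ$. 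Assigning to each vertex of $Q$ a $\bullet$-arc and gluing the polygons along the sides they share---a vertex lying on two threads becoming an interior arc, one lying on a single thread sitting on the boundary---produces a $\circ\bullet$-marked surface $(S,M)$ together with a collection $D$ of $\bullet$-arcs. One then checks that $D$ is a $\bullet$-dissection and that this assignment is inverse to $D\mapsto A(D)$, so that $A(D)\cong A$; this is exactly the content of \cite[Theorem 4.10]{PPP}, which I would invoke for the verification of the gluing.

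I expect the forward direction to be entirely routine, since it amounts to reading off the local arrow pattern inside a single polygon and counting. The main obstacle is the converse: one must check that the abstract gluing of polygons along shared sides assembles into a genuine compact oriented surface with a consistent orientation, that the rotations around each $\bullet$-marked point close up correctly so that no unwanted monogons arise, and that the resulting arc system is maximal and therefore a genuine $\bullet$-dissection recovering the original algebra. It is this global consistency of the reconstruction---rather than any local computation---that carries the real weight, and it is precisely here that I would rely on the surface-model machinery of \cite{PPP,APS}.
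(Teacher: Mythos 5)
The paper gives no proof of this proposition at all---it is imported wholesale from \cite[Theorem 4.10]{PPP}---so your proposal, which verifies the forward direction by a routine local count and defers the converse (the genuinely hard global reconstruction of $(S,M)$ and $D$ from the algebra) to that same citation, is fully consistent with the paper's treatment and in fact slightly more self-contained. Your forward argument is correct; the only cosmetic caveat is that the two sides of an arc $d$ may face the \emph{same} polygon $\triangle_v$ (so $d$ occurs twice among its sides and the ``other'' arrow in (SB4) need not lie in a different polygon), but your counting---at most one incoming and one outgoing arrow per side-occurrence, hence at most two of each---goes through unchanged.
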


We prepare a few terminology. 

\begin{definition}
   Let $Q(D)$ be the quiver in Definition \ref{def QD}.
   \begin{enumerate}
   \item[$\bullet$] For a puncture $v\in M_{\circ}$, a cycle $C_v$ is called a {\it special cycle} at $v$. If it is a representative of its cyclic permutation class starting and ending at $d\in D$, then we call it a {\it special $d$-cycle} at $v$. 
   \item[$\bullet$] For $v\in M_{\circ}$, every non-constant sub-path of $C_v$ is called a {\it short path}.
   \end{enumerate}
\end{definition}

%%%
%%%
\subsection{Two-term silting complexes for $A(D)$ via $D$-laminates}
Let $D$ be a $\bullet$-dissection of $(S,M)$ and $A(D):=\widehat{kQ(D)}/\overline{I(D)}$ the complete gentle algebra associated with $D$.  
In this subsection, we establish a geometric model of two-term silting theory for $A(D)$ and prove Theorem \ref{comp SB gtame}. 
To do it, we need some preparation.

Let $t$ be a sum of all special cycles of $Q(D)$, which is in the center of $A(D)$.

\begin{proposition} \label{modfin}
The complete gentle algebra $A(D)$ is a module-finite $k[[t]]$-algebra. 
\end{proposition}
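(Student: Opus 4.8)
The plan is to verify the two assertions of Proposition \ref{modfin} in turn: first that $t$, defined as the sum of all special cycles of $Q(D)$, is indeed a central element of $A(D)$, and second that $A(D)$ is finitely generated as a module over $k[[t]]$.

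For centrality, I would argue element-by-element on the arrows, since the arrows together with the idempotents $e_d$ ($d\in D$) topologically generate $A(D)$. Fix an arrow $a\colon d\to d'$; it lies on exactly one special cycle $C_v$, appearing as the step from $d$ to $d'$. I would compute $at$ and $ta$ and show both equal the length-aware path that traverses $C_v$ once starting immediately after (resp.\ before) $a$. The key local fact is (SB2)--(SB3): at each vertex there is at most one arrow that composes nontrivially on each side, so multiplying $a$ by the sum of all special cycles picks out precisely the unique special cycle through $d'$ on the right and through $d$ on the left, and by the defining relation $I(D)$ (generated by length-$2$ paths that are not subpaths of any $C_v$) these two products coincide as the unique cyclic word through $a$. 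One must also check $e_d t = t e_d$, which is clear since $t$ is a sum of cycles each based at a single vertex. Passing to the completion and to arbitrary elements is then a matter of continuity and bilinearity of multiplication, so $t$ is central.

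For module-finiteness, the strategy is to exhibit a finite set of short paths (in the sense of the Definition preceding this proposition) that generates $A(D)$ over $k[[t]]$. I would show that every path in $Q(D)$ which is nonzero in $A(D)$ factors, modulo $I(D)$, as a product of a power of $t$ and one of the finitely many short paths: concretely, any long nonzero path must repeatedly wind around special cycles, and each complete traversal of a special cycle can be absorbed into a factor of $t$ by the centrality and the explicit form of $t$ established above. Since there are only finitely many short paths (each special cycle has finite length, and $M_{\circ}$ and $D$ are finite), the images of these short paths span $A(D)$ as a $k[[t]]$-module, possibly after completing.

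The main obstacle I anticipate is the bookkeeping in the module-finiteness step: making precise the claim that ``winding once around a special cycle $=$ multiplying by $t$'' uniformly across all the different special cycles, and controlling the interaction with the completion so that the finitely many short paths really do generate the \emph{completed} algebra and not merely a dense subalgebra. In particular I would need to confirm that the nonzero paths of bounded residual length (those not reducible by a factor of $t$) are exactly the short paths, using (SB2)--(SB3) to rule out any alternative nonzero continuation, and then invoke the topological generation to conclude. The centrality step, by contrast, I expect to be a routine local check once the matching conditions are spelled out.
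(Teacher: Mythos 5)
Your proposal is correct and takes essentially the same approach as the paper: the paper's proof is the one-line assertion that $A(D)$ is generated by all short paths and constant paths as a $k[[t]]$-module (with centrality of $t$ likewise asserted without proof just before the proposition), and your winding-around-special-cycles argument, together with the local check of centrality via (SB2)--(SB3), is precisely the detailed justification of that assertion. The only slip is that your spanning set in the module-finiteness step must also include the constant paths $e_d$, $d\in D$ (these are not of the form $t^h\cdot(\text{short path})$), exactly as in the paper's formulation.
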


\begin{proof}
It follows from the fact that $A(D)$ is generated by all short paths and constant paths as an $k[[t]]$-module. 
\end{proof}

We would discuss a class of complexes in $\Kb(\proj A(D))$ obtained from $D$-laminates. 
Let $P_d$ be an indecomposable projective $A(D)$-module corresponding to $d\in D$. 
For $d, e \in D$, every short path in $Q(D)$ from $d$ to $e$ provides a non-vanishing homomorphism $P_e\to P_d$ in $\proj A(D)$, that we call {\it short map}. 

\begin{definition}\rm \label{short string cpx}
An indecomposable two-term complex in $\Kb(\proj A(D))$ is called a {\it two-term string complex} if it can be written as one of the following forms:  
\[
\renewcommand{\arraystretch}{1.5}
\setlength{\tabcolsep}{2.5mm}
\begin{tabular}{ccccccccc}
(1) & (2) & (3) \\
\begin{xy}
(-18,5)*+{P_{d_1}}="e1", (-18,-5)*+{P_{d_3}}="e2", (-20,-20)*{P_{d_{m-2}}}="em-1", (-20,-30)*{P_{d_m}}="em",
(0,0)*+{P_{d_2}}="d1", (0,-10)*+{P_{d_4}}="d2", (2.5,-25)*{P_{d_{m-1}}}="dm-1",
{ "e1"\ar@{->}|{f_{21}} "d1"}, 
{ "e2"\ar@{->}|{f_{23}} "d1"},
{ "e2"\ar@{->}|{f_{43}} "d2"},
{ "em-1"\ar@{->}|{} "dm-1"},
{ "em"\ar@{->}|{f_{m-1\,m}} "dm-1"},
{(-10,-12.5) \ar@{.} (-10, -17.5)},
\end{xy}
&
\begin{xy}
(-18,5)*+{P_{d_1}}="e1", (-18,-5)*+{P_{d_3}}="e2", (-20,-25)*{P_{d_{m-1}}}="em-1", 
(0,0)*+{P_{d_2}}="d1", (0,-10)*+{P_{d_4}}="d2", (2, -20)*{P_{d_{m-2}}}="dm-1", (2.5,-30)*{P_{d_{m}}}="dm",
{ "e1"\ar@{->}|{f_{21}} "d1"}, 
{ "e2"\ar@{->}|{f_{23}} "d1"},
{ "e2"\ar@{->}|{f_{43}} "d2"},
{ "em-1"\ar@{->}|{} "dm-1"}, 
{ "em-1"\ar@{->}|{f_{m\, m-1}} "dm"}, 
{(-10,-12.5) \ar@{.} (-10, -17.5)},
\end{xy}
&
\begin{xy}
(18,5)*+{P_{d_1}}="e1", (18,-5)*+{P_{d_3}}="e2", (20,-20)*{P_{d_{m-2}}}="em-1", (20,-30)*{P_{d_m}}="em",
(0,0)*+{P_{d_2}}="d1", (0,-10)*+{P_{d_4}}="d2", (-2,-25)*{P_{d_{m-1}}}="dm-1",
{ "e1"\ar@{<-}|{f_{12}} "d1"}, 
{ "e2"\ar@{<-}|{f_{32}} "d1"},
{ "e2"\ar@{<-}|{f_{34}} "d2"},
{ "em-1"\ar@{<-}|{} "dm-1"},
{ "em"\ar@{<-}|{f_{m\,m-1}} "dm-1"},
{(7.5,-12.5) \ar@{.} (7.5, -17.5)},
\end{xy}
\end{tabular}
\]
where each $f_{ij}$ is of the form $f_{ij}=t^hf'$ for a short map $f' \colon P_j \to P_i$ and a non-negative integer $h$. 
It is called {\it two-term short string complex} if all $f_{ij}$ are short maps.
\end{definition}

We denote by $\twoscx A(D)$ the set of isomorphism classes of two-term short string complexes $P$ such that $\mathrm{add} P^0 \cap \mathrm{add} P^{-1}=0$. To prove the following proposition, we use $\tau$-tilting theory (see \cite{AIR} for details).

\begin{proposition} \label{2-ips2-scx}
Any indecomposable two-term presilting complex in $\Kb(\proj A(D))$ is a two-term short string complex, that is, $\twoips A(D) \subseteq \twoscx A(D)$. 
\end{proposition}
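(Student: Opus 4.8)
The plan is to reduce the statement to the finite-dimensional gentle algebra $\bar{A}:=A(D)/(t)$, classify its indecomposable two-term presilting complexes by $\tau$-tilting theory, and then transport the answer back to $A(D)$ along the reduction functor $\overline{(-)}$.

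Since $t$ is a sum of cyclic paths, it is a central element lying in the radical of $A(D)$, so Proposition \ref{central nilpotent} applies to the ideal $(t)$ and gives a bijection $\twopresilt A(D)\to\twopresilt\bar{A}$. This bijection is induced by the additive triangle functor $\overline{(-)}$ and preserves $g$-vectors, hence the number of indecomposable summands, so it restricts to a bijection $\twoips A(D)\to\twoips\bar{A}$ on indecomposables. The algebra $\bar{A}$ is a finite-dimensional gentle algebra, so I would apply $\tau$-tilting theory \cite{AIR}: every indecomposable object of $\twoips\bar{A}$ is either a shifted indecomposable projective $P_i[1]$ or the minimal projective presentation of an indecomposable $\tau$-rigid $\bar{A}$-module. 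Over a gentle algebra band modules admit self-extensions and are therefore never $\tau$-rigid, so the $\tau$-rigid indecomposables are exactly the string modules; reading the minimal projective presentation off the underlying string shows it is of one of the forms (1)--(3) of Definition \ref{short string cpx} with every differential a short map. Thus every element of $\twoips\bar{A}$ is a two-term short string complex.

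It remains to lift this conclusion along the bijection. Given $\bar{P}\in\twoips\bar{A}$, let $\hat{P}$ be the two-term complex over $A(D)$ with the same string data, where each short map over $\bar{A}$ is lifted verbatim to the corresponding short map over $A(D)$ (these carry no factor of $t$, so they survive and reduce back correctly); then $\hat{P}$ is of one of the forms (1)--(3), is indecomposable because it comes from a single string, and satisfies $\overline{\hat{P}}=\bar{P}$. The crucial point is that $\hat{P}$ is presilting. Since $\hat{P}$ is two-term it suffices to check $\Hom_{\mathsf{K}^{\rm b}(\proj A(D))}(\hat{P},\hat{P}[1])=0$, and this Hom-space is the cokernel of the $k[[t]]$-linear map $\End(\hat{P}^{-1})\oplus\End(\hat{P}^{0})\to\Hom_{A(D)}(\hat{P}^{-1},\hat{P}^{0})$ sending $(a,b)$ to $fa+bf$, where $f$ is the differential of $\hat{P}$. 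Because each $\hat{P}^{i}$ is finitely generated projective and $t$ is central, one has $\Hom_{A(D)}(\hat{P}^{i},\hat{P}^{j})\otimes_{k[[t]]}k\cong\Hom_{\bar{A}}(\bar{P}^{i},\bar{P}^{j})$ and a compatible surjection for the endomorphism terms; as $-\otimes_{k[[t]]}k$ is right exact it commutes with this cokernel, identifying $\Hom_{\mathsf{K}^{\rm b}(\proj A(D))}(\hat{P},\hat{P}[1])\otimes_{k[[t]]}k$ with $\Hom_{\mathsf{K}^{\rm b}(\proj\bar{A})}(\bar{P},\bar{P}[1])=0$. Since $\Hom_{\mathsf{K}^{\rm b}(\proj A(D))}(\hat{P},\hat{P}[1])$ is a finitely generated module over the complete local ring $k[[t]]$, Nakayama's lemma forces it to vanish, so $\hat{P}\in\twoips A(D)$.

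Finally, for an arbitrary $P\in\twoips A(D)$ its image $\bar{P}=\overline{P}$ is a two-term short string complex by the second step, and the lift $\hat{P}$ constructed above lies in $\twoips A(D)$ with $\overline{\hat{P}}=\bar{P}=\overline{P}$; injectivity of the bijection $\overline{(-)}$ then gives $P\cong\hat{P}$, which is a two-term short string complex, as required. I expect the lifting step to be the main obstacle: it is exactly the place where one must exclude a genuinely $t$-divisible differential $t^{h}f'$ with $h\ge1$ from an indecomposable presilting complex, and the base-change-and-Nakayama argument is what rules this out. A more computational alternative would be to evaluate $\Hom(\hat{P},\hat{P}[1])$ directly from the string combinatorics and show that any factor of $t$ in the differential produces a non-trivial self-extension, but the reduction argument is considerably shorter.
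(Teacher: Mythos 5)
Your proposal is correct and takes essentially the same approach as the paper: reduce modulo the central element $t$ to the finite-dimensional special biserial algebra $A(D)/(t)$, use the string/band dichotomy and $\tau$-tilting theory \cite{AIR} to see that every indecomposable two-term presilting complex over the quotient is a short string complex, and transfer back along the bijection of Proposition \ref{central nilpotent}. Your explicit lift-plus-Nakayama argument simply fills in the step that the paper delegates to that bijection (``the functor $-\otimes_{A(D)}B$ preserves the property being short''); the only detail to add is that membership in $\twoscx A(D)$ also requires $\add P^0\cap\add P^{-1}=0$, which is immediate from Proposition \ref{basis}(3) since $P$ is presilting.
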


\begin{proof}
Recall from Proposition \ref{modfin} that $A:=A(D)$ is module-finite
over $R=k[[t]]$.
Then, $B:=A/\mathfrak{m}A$ is a finite dimensional $k$-algebra, where
$\mathfrak{m}=tR$.
Moreover, $B$ is a string algebra. We refer to \cite{BR87,WW} about string algebras and their representations.

Let $\overline{(-)}:=-\otimes_{A}B\colon \Kb(\proj A) \to \Kb(\proj B)$.
For two indecomposable projective $A$-modules $P,Q$ with $P\ncong Q$, we
have an isomorphism
\begin{equation}
     \Hom_A(P,Q)/\mathfrak{m}\Hom_A(P,Q) \cong
\Hom_B(\overline{P},\overline{Q})
\end{equation}
as $k$-vector spaces.
Thus, the set of short maps from $P$ to $Q$ induces a $k$-basis of
$\Hom_B(\overline{P},\overline{Q})$.
In particular, every two-term string complex $P=[P^{-1} \to P^{0}]$ of
projective $B$-modules satisfying $\add P^0\cap \add P^{-1}=0$ are
short.
Here, two-term string complexes and two-term short string complexes of
projective $B$-modules are defined by Definition \ref{short string cpx}
similarly.

Let $T=[T^{-1}\to T^0]\in \twoips A$.
If $T$ is a stalk complex, then the assertion is clear. In the
following, we assume that $T$ is not a stalk complex.
We consider $\overline{T}\in \twoips B$ and $M:=H^0(\overline{T})$.
By \cite[Lemma 3.4]{AIR}, we have $\Hom_{B}(M,\tau M)=0$, where $\tau$
is the Auslander-Reiten translation for $B$.
Since $B$ is a string algebra, $M$ is either a string module or a band
module.
If $M$ is a band module, then it satisfies $M\cong \tau M$ and
\begin{equation*}
     \Hom_B(M,\tau M) = \Hom_B(M,M) \neq 0,
\end{equation*}
a contradiction.
Therefore, $M$ is a string module.
Thus, $\overline{T}$ is a minimal projective presentation of a string
module and hence it is a two-term short string complex.
We can take a two-term short string complex $T'$ of projective
$A$-modules such that $\overline{T'}\cong \overline{T}$.
By Proposition \ref{central nilpotent}, $T'$ is a presilting complex and
$T\cong  T'$ in $\Kb(\proj A)$. It finishes a proof.
\end{proof}

From now on, we establish a geometric model of two-term silting theory for $A(D)$.
First, we give a geometric model of short maps in $\proj A(D)$. 

\begin{definition}
A {\it $D$-segment} is a non-self-intersecting curve, considered up to isotopy relative to $M$, in a polygon $\triangle_v$ of $D$ for some $v \in M_{\circ}$ whose ends are unmarked points on sides of $\triangle_v$ or spirals around $v$. 
\end{definition}

Let $\eta$ be a $D$-segment in $\triangle_v$ whose endpoints lie on $d, e \in \triangle_v \cap D$. 
We orient it to satisfy that $v$ is to its right and its starting point lies on $d$.
Then it corresponds to a short path $d_1 \overset{a_1}{\to} \cdots \overset{a_{s-1}}{\to}d_s$ in $Q(D)$, where $d_1,\ldots, d_s \in D$ are sides of $\triangle_v$ in counterclockwise order with $d_1=e$ and $d_s=d$.
It induces a short map $\sigma(\eta) \colon P_d \to P_e$ in $\proj A(D)$. 

\begin{proposition}\label{bij seg short}
The map $\sigma$ induces a bijection
$$
\sigma : \{\text{$D$-segments whose endpoints lie on $D$}\} \rightarrow  \{\text{short maps in $\proj A(D)$}\}.
$$
\end{proposition}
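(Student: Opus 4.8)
The plan is to factor $\sigma$ through the set of short paths and to show that each factor is a bijection, working one polygon $\triangle_v$ at a time. First I would observe that both sides of the asserted correspondence are partitioned according to the polygon in which they occur: every $D$-segment lies in a unique $\triangle_v$, and every short path is a non-constant sub-path of a unique special cycle $C_v$. Indeed, a length-$2$ composite of two arrows lying in different special cycles is a path outside every $C_v$, hence lies in $I(D)$ and induces the zero map, so it is never a short map. Thus it suffices to prove, for each $v \in M_{\circ}$, that $\sigma$ restricts to a bijection between the $D$-segments in $\triangle_v$ with endpoints on $D$ and the short maps induced by short paths in $C_v$.

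Next I would split $\sigma$ (on $\triangle_v$) as the composite of a geometric map, sending a $D$-segment $\eta$ to the short path read off by traversing $\eta$ from $d$ to $e$ with $v$ on the right, followed by the algebraic map sending a short path to its induced short map. For the algebraic map I would invoke the standard $k$-basis of a complete gentle algebra: the paths of $Q(D)$ that avoid $I(D)$---equivalently the sub-paths of the special cycles, together with their $t$-powers---are $k$-linearly independent and nonzero in $A(D)$. Consequently distinct short paths with prescribed source and target idempotents induce distinct non-vanishing homomorphisms $P_e \to P_d$, so the assignment of a short map to a short path is a bijection onto the short maps attached to $C_v$.

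The geometric map carries the real content, and I would analyze it by the two shapes of $\triangle_v$ furnished by Proposition \ref{poly}. In the boundary case $\triangle_v$ is a disk whose boundary is the linear chain of sides $d_1, \dots, d_m$ closed up by the boundary segment carrying $v$; since a disk is simply connected, a simple arc with prescribed endpoint-sides is unique up to isotopy relative to $M$, and the requirement that $v$ lie on the right forces the orientation and singles out the counterclockwise run $d_1 = e, \dots, d_s = d$ of sides it cuts off. This gives a bijection onto the non-constant sub-paths of the linear path $C_v$. In the puncture case $\triangle_v$ is a once-punctured disk and $C_v$ is a genuine cycle, so an arc may wind; here I would use that non-self-intersection forbids winding around $v$ more than once, while the condition that $v$ lie on the right fixes on which side of the puncture the arc passes and hence its direction. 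Together these pin down a unique isotopy class for each cyclic sub-path $d_1 = e, \dots, d_s = d$, including the once-around arc, which returns to the side it started from and corresponds to the full cycle.

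The main obstacle I anticipate is precisely this punctured-polygon case: one must check carefully that the condition ``$v$ on the right'' together with simplicity of the curve cuts the isotopy classes of such arcs down to exactly the cyclic sub-paths of $C_v$, with no over- or under-counting arising from the nontrivial fundamental group of the once-punctured disk. Once the bijections on each $\triangle_v$ are established, assembling them over all $v \in M_{\circ}$---which is legitimate by the polygon decomposition noted at the outset---shows that $\sigma$ is a bijection, as claimed.
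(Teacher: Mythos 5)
Your proposal is correct and is, in substance, the same (direct) approach as the paper: the paper's entire proof of this proposition is the single sentence that the assertion follows immediately from the definition of $\sigma$, so your factorization of $\sigma$ through short paths and your polygon-by-polygon check (disk case versus once-punctured-disk case, with non-self-intersection bounding the winding around the puncture) simply supplies the routine details the paper leaves to the reader. Nothing in your argument conflicts with the paper's definitions, and the counts match on both sides in each case, so this is a legitimate fleshing-out rather than a different route.
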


\begin{proof}
The assertion immediately follows from the definition of $\sigma$.
\end{proof}

Next, we give a geometric model of two-term short string complexes in $\Kb(\proj A(D))$. 

\begin{definition}\label{gen $D$-lam}
A {\it generalized $D$-laminate} is a $\circ$-laminate $\gamma$ intersecting at least one $\bullet$-arc of $D$ such that the condition $(\ast)$ in Definition \ref{$D$-laminate}(2) and the following conditions are satisfied:
\begin{enumerate}
 \item[$\bullet$] Each connected component of $\gamma$ in $\triangle_v$ does not intersect itself for any $v \in M_{\circ}$;
 \item[$\bullet$] For any $d \in D$, all intersection points of $\gamma$ and $d$ are either positive or negative simultaneously.
\end{enumerate}
\end{definition}
Note that a $D$-laminate is precisely a non-self-intersecting generalized $D$-laminate.

A non-closed generalized (NCG, for short) $D$-laminate $\gamma$ is decomposed into $D$-segments $\gamma_0,\ldots,\gamma_{m}$ in polygons such that $\gamma_{i-1}$ and $\gamma_{i}$ have a common endpoint $p_i$ on $d_i\in D$ for every $i\in \{1,\ldots,m\}$. 
In particular, an end of $\gamma_0$ (resp., $\gamma_m$) does not lie on $D$ and both endpoints of $\gamma_i$ lie on $D$ for $i \in \{1,\ldots,m-1\}$. 
By Proposition \ref{bij seg short}, each $\gamma_i$ provides a short map $\sigma(\gamma_i)$ between $T^{(i)}_{\gamma}$ and $T^{(i+1)}_{\gamma}$ for $i \in \{1,\ldots,m-1\}$, where $T^{(i)}_{\gamma}:= P_{d_i}$.
It yields a complex $T_{\gamma}$ in $\Kb(\proj A(D))$. 
More precisely, $T_{\gamma}$ is a two-term complex $[T_{\gamma}^{-1}\overset{f}{\to} T_{\gamma}^{0}]$ given by
$$
T_{\gamma}^{-1} := \bigoplus_{\text{$p_j$:negative}} T_{\gamma}^{(j)}, \quad T_{\gamma}^0 := \bigoplus_{\text{$p_i$:positive}} T_{\gamma}^{(i)},
$$
$$
f=(f_{ij})_{i,j \in \{1,\ldots,m-1\}}, \quad \text{where} \quad f_{ij}:=
\begin{cases} 
\sigma(\gamma_{j-1}) &\text{if $i=j-1$,}\\
\sigma(\gamma_{j}) &\text{if $i=j+1,$}\\
0 &\text{otherwise.}
\end{cases}
$$
From our construction, we have the equality $g(T_{\gamma})=g(\gamma)$ under the identification of $\mathbb{Z}^{\vert D\vert }$ and $\mathbb{Z}^{\vert A\vert }$ via the map $d\mapsto P_d$, where the vector $g({\gamma})\in \mathbb{Z}^{\vert D\vert }$ is defined by the equality \eqref{def g-vect}.

\begin{lemma}\label{lem NCG}
Suppose that two NCG $D$-laminates 
$\gamma$ and $\gamma'$ are decomposed into $D$-segments $\gamma_0,\ldots,\gamma_m$ and $\gamma'_0,\ldots,\gamma'_{m'}$ as above, respectively. If $m=m'>1$ and $\gamma_i=\gamma'_i$ for $i \in \{1,\ldots,m-1\}$, then $\gamma'=\gamma$.
\end{lemma}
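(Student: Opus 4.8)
The plan is to show that the two end segments $\gamma_0,\gamma_m$ of $\gamma$ are completely determined by the adjacent interior segments $\gamma_1,\gamma_{m-1}$. Since these interior segments are assumed equal to the corresponding ones of $\gamma'$, this forces every segment of $\gamma$ to coincide with that of $\gamma'$, and gluing then yields $\gamma=\gamma'$. The hypothesis $m>1$ is exactly what makes each end segment glued to an \emph{interior} segment (when $m=1$ both pieces are end segments, sharing only $p_1$, and no such rigidity is available), so the whole argument reduces to a local statement inside a single polygon.

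First I would isolate $\gamma_0$. By the definition of the decomposition, $\gamma_0$ crosses no arc of $D$ except at its terminal point $p_1\in d_1$, hence it lies entirely in one polygon $\triangle_{v_0}$ and runs from a free end to $p_1$, where $p_1=\gamma_0\cap\gamma_1$. Because $\gamma_1=\gamma_1'$, the arc $d_1$, the point $p_1$, and the local configuration of the curve at $p_1$ coincide for $\gamma$ and $\gamma'$ up to isotopy. Condition $(\ast)$ of Definition \ref{$D$-laminate} then pins down the one remaining local datum: writing $v_0,v_1$ for the endpoints of $d_1^{\ast}$ (the marked points of the polygons adjacent to $d_1$), since $\gamma_1$ determines the side of $v_1$, the opposite side must contain $v_0$; equivalently, the crossing at $p_1$ has a determined sign. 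Thus $\gamma_0$ and $\gamma_0'$ keep $v_0$ on the same prescribed side.

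The key step is then a topological rigidity statement inside $\triangle_{v_0}$. By Proposition \ref{poly}, $\triangle_{v_0}$ is a disk carrying the single marked point $v_0$, so the type of free end is forced: an unmarked boundary point when $v_0$ lies on $\partial S$, and a spiral around $v_0$ when $v_0$ is a puncture. I would argue that a simple $D$-segment in such a disk, starting at the fixed point $p_1$ and ending at a free end, with $v_0$ on the prescribed side, is unique up to isotopy relative to $M$. In the boundary case the free end may a priori lie on either side of $v_0$ along the boundary segment, giving two isotopy classes distinguished precisely by which side contains $v_0$, so fixing that side removes the ambiguity; in the puncture case the spiralling direction is again determined by the side of $v_0$, and a simple arc spiralling into $v_0$ in a fixed direction is unique. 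Applying the same analysis to $\gamma_m$ via $\gamma_{m-1}=\gamma_{m-1}'$ gives $\gamma_m=\gamma_m'$.

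I expect the main obstacle to be this last uniqueness inside a polygon: treating the boundary and puncture cases uniformly, and verifying that a globally simple arc keeps $v_0$ on the single side dictated by the local sign at $p_1$ all the way out to its free end, so that the endpoint cannot slip to the wrong side of $v_0$. Once $\gamma_i=\gamma_i'$ for every $i\in\{0,\dots,m\}$, a routine isotopy-extension argument along the common crossing points $p_1,\dots,p_m$ glues the segmentwise isotopies into an isotopy $\gamma\simeq\gamma'$, completing the proof.
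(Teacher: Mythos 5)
Your proof is correct and takes essentially the same approach as the paper: the paper's (one-line) argument is precisely that $\gamma_0$ and $\gamma_m$ depend only on the signs of $p_1$ and $p_m$, which are uniquely determined by $\gamma_1$ and $\gamma_{m-1}$. Your write-up just makes explicit the two ingredients the paper leaves implicit, namely that condition $(\ast)$ converts knowledge of $\gamma_1$ (resp.\ $\gamma_{m-1}$) into the sign at $p_1$ (resp.\ $p_m$), and that a simple end segment in the one-marked-point polygon with prescribed sign is unique up to isotopy in both the boundary and puncture cases.
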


\begin{proof}
The $D$-segment $\gamma_0$ (resp., $\gamma_m$) only depends on the sign of $p_1$ (resp., $p_m$), which is uniquely determined by $\gamma_1$ (resp., $\gamma_{m-1}$). Thus we have $\gamma_0=\gamma'_0$ and $\gamma_m=\gamma'_m$.
\end{proof}

%%%
\begin{proposition} \label{D-arc}
The map $T_{(-)}\colon \gamma \mapsto T_{\gamma}$ induces a bijection
$$
T_{(-)} \colon \{\text{NCG $D$-laminates}\} \rightarrow \twoscx A(D)
$$
such that $g(\gamma)=g(T_{\gamma})$ for any NCG $D$-laminate $\gamma$.
\end{proposition}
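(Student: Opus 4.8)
The plan is to verify that $T_{(-)}$ is well-defined, injective, and surjective, using Proposition~\ref{bij seg short} as a dictionary between $D$-segments and short maps and Lemma~\ref{lem NCG} to reconstruct an NCG $D$-laminate from its interior segments. The identity $g(\gamma)=g(T_\gamma)$ is already recorded in the construction preceding the statement, so only the bijectivity requires work.

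First I would check well-definedness, i.e.\ that $T_\gamma$ genuinely lies in $\twoscx A(D)$. Decomposing $\gamma$ into $D$-segments $\gamma_0,\dots,\gamma_m$, the interior segments $\gamma_1,\dots,\gamma_{m-1}$ each yield a short map by Proposition~\ref{bij seg short}; placing the summand $P_{d_i}$ in homological degree $0$ or $-1$ according to the sign of the crossing $p_i$ produces a zigzag of short maps, which is exactly one of the shapes (1)--(3) of Definition~\ref{short string cpx}. The crucial point here is that consecutive crossings carry opposite signs (forced by the condition $(\ast)$ together with the orientation convention used to define $\sigma$), so that every short map indeed runs from a degree $-1$ summand to a degree $0$ summand and the differential is well formed. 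Finally, the second condition in Definition~\ref{gen $D$-lam}---that all crossings of $\gamma$ with a fixed $d$ share one sign---says precisely that no $P_d$ occurs in both $T_\gamma^0$ and $T_\gamma^{-1}$, giving $\add T_\gamma^{0}\cap\add T_\gamma^{-1}=0$.

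For injectivity, I would suppose $T_\gamma\cong T_{\gamma'}$. A two-term short string complex determines, up to reversing the string, its ordered list of short maps; by Proposition~\ref{bij seg short} each short map recovers a unique $D$-segment, so $\gamma$ and $\gamma'$ share the same interior segments $\gamma_1,\dots,\gamma_{m-1}$ (after possibly reversing one of them). When $m>1$, Lemma~\ref{lem NCG} then forces $\gamma=\gamma'$, the end segments $\gamma_0,\gamma_m$ being determined by the signs of $p_1,p_m$. The remaining case, a stalk complex $P_d$, corresponds to the single-crossing laminates $d^{\ast}_{+}$ or $d^{\ast}_{-}$ and is settled by direct inspection.

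Surjectivity is where the main work lies. Given $P\in\twoscx A(D)$, I would read off its sequence of short maps from the string shape and, via Proposition~\ref{bij seg short}, lift each to a $D$-segment $\eta_i$ in the appropriate polygon; consecutive segments $\eta_i,\eta_{i+1}$ meet the common arc $d_{i+1}$ and are glued there, after which the two end segments, dictated by the signs at the two extreme crossings, are attached so that the resulting curve terminates at unmarked boundary points or spirals. The delicate part---and the step I expect to be the real obstacle---is confirming that this glued curve is a bona fide NCG $D$-laminate: one must check that the condition $(\ast)$ and the requirement that each component be non-self-intersecting within a polygon both hold, that the geometric sign at each glued crossing agrees with the homological degree assigned by $P$, and above all that $\add P^{0}\cap\add P^{-1}=0$ translates into the sign-consistency demanded by Definition~\ref{gen $D$-lam}. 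Once these local compatibilities are verified, $T_\gamma\cong P$ follows, and combined with the previous steps this establishes the asserted bijection together with the equality $g(\gamma)=g(T_\gamma)$.
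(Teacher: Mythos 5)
Your proof is correct and takes essentially the same route as the paper's: well-definedness follows from the construction, injectivity from Lemma \ref{lem NCG} applied to the interior $D$-segments recovered via Proposition \ref{bij seg short}, and surjectivity by building the inverse map, gluing the segments $\sigma^{-1}(f_{ij})$ and attaching the two end segments dictated by the signs of the extreme crossings (with stalk complexes handled separately by $d^{\ast}_{\pm}$). The details you make explicit --- the sign alternation forced by $(\ast)$, and the matching of the sign-consistency condition in Definition \ref{gen $D$-lam} with $\add P^{0}\cap\add P^{-1}=0$ --- are exactly what the paper compresses into ``it follows from our construction.''
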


\begin{proof}
It follows from our construction that $T_{\gamma}$ is contained in $\twoscx A(D)$.
By Lemma \ref{lem NCG}, this map is injective.

In order to prove surjectivity of the map, we give the inverse map. 
If $P \in \twoscx A(D)$ is a stalk complex $P_d$ with $d\in D$ concentrated in degree $0$ (resp., $-1$), then we just take $\gamma=d_{+}^{\ast}$ (resp., $\gamma=d_{-}^{\ast}$). 
On the other hand, let $P \in \twoscx A(D)$ be a non-stalk complex which is one of (1)-(3) in Definition \ref{short string cpx}. 
We only consider the form (1) since the others can be proved similarly.
By Proposition \ref{bij seg short}, $\gamma_1:=\sigma^{-1}(f_{21}),\gamma_2:=\sigma^{-1}(f_{23}),\ldots,\gamma_{m-1}:=\sigma^{-1}(f_{m-1\,m})$ are $D$-segments, and $\gamma_{i-1}$ and $\gamma_i$ have a common endpoint on $d_i$ for $i \in \{2,\ldots,m-1\}$. 
Then there are two $D$-segments $\gamma_0$ and $\gamma_m$ such that the curve $\gamma$ obtained by gluing $\gamma_0,\ldots,\gamma_m$ one by one is an NCG $D$-laminate.
From our construction, we have $P=T_{\gamma}$.
\end{proof}

%%%
Finally, we give a geometric model of two-term presilting/silting complexes in $\Kb(\proj A(D))$.
We recall a description of morphisms between two-term short string complexes due to \cite{ALP}. We give examples of these morphisms, singleton single maps and quasi-graph map representatives, in Section \ref{Sec example rep}.

\begin{definition} \cite[Definition 3.7]{ALP} \label{def singleton single map}
For $T, T' \in\twoscx A(D)$, a morphism $f\in\Hom_{\Kb(\proj A(D))}(T, T'[1])$ is called a {\it singleton single map} if it is induced by a short map $p$ as one of the following forms:
\[
(a)
\begin{tikzpicture}[baseline=1mm,scale=1]
\node(l)at(-1.7,0){$\bullet$}; \node(r)at(1.7,0){$\bullet$};
\node(ld)at(-0.5,-0.5){$\bullet$}; \node(rd)at(0.5,-0.5){$\bullet$};
\node(ldd)at(-1.7,-1){$\bullet$}; \node(rdd)at(1.7,-1){$\bullet$};
\draw[->,blue](l)--node[above]{$p$}(r);
\draw[->](l)--node[fill=white,inner sep=2]{$q$}(ld); \draw[->](rd)--node[fill=white,inner sep=2]{$q'$}(r);
\draw[->](ldd)--(ld); \draw[->](rd)--(rdd);
\node at(-1.1,-1){$\vdots$}; \node at(1.1,-1){$\vdots$};
\node at(-2.2,-0.5){$T$}; \node at(2.2,-0.5){$T'$};
\end{tikzpicture}
\hspace{5mm}
(b)
\begin{tikzpicture}[baseline=7mm,scale=1]
\node(l)at(-1.7,0){$\bullet$}; \node(r)at(1.7,0){$\bullet$};
\node(lu)at(-0.5,0.5){$\bullet$};
\node(ld)at(-0.5,-0.5){$\bullet$}; \node(rd)at(0.5,-0.5){$\bullet$};
\draw[->,blue](l)--node[fill=white,inner sep=2]{$p$}(r);
\draw[->](l)--node[fill=white,inner sep=2]{$q$}(ld); \draw[->](rd)--node[fill=white,inner sep=2]{$q'$}(r);
\draw[->](l)--node[above]{$p'p$}(lu);
\node at(-1.1,-0.7){$\vdots$}; \node at(1.1,-0.7){$\vdots$};
\node at(-1.1,1.2){$\vdots$};
\node at(-2.2,0){$T$}; \node at(2.2,0){$T'$};
\end{tikzpicture}
\]
\[
(c)
\begin{tikzpicture}[baseline=7mm,scale=1]
\node(l)at(-1.7,0){$\bullet$}; \node(r)at(1.7,0){$\bullet$};
\node(ru)at(0.5,0.5){$\bullet$};
\node(ld)at(-0.5,-0.5){$\bullet$}; \node(rd)at(0.5,-0.5){$\bullet$};
\draw[->,blue](l)--node[fill=white,inner sep=2]{$p$}(r);
\draw[->](l)--node[fill=white,inner sep=2]{$q$}(ld); \draw[->](rd)--node[fill=white,inner sep=2]{$q'$}(r);
\draw[->](ru)--node[above]{$pp''$}(r);
\node at(-1.1,-0.7){$\vdots$}; \node at(1.1,-0.7){$\vdots$};
\node at(1.1,1.2){$\vdots$};
\node at(-2.2,0){$T$}; \node at(2.2,0){$T'$};
\end{tikzpicture}
\hspace{5mm}
(d)
\begin{tikzpicture}[baseline=7mm,scale=1]
\node(l)at(-1.7,0){$\bullet$}; \node(r)at(1.7,0){$\bullet$};
\node(lu)at(-0.5,0.5){$\bullet$}; \node(ru)at(0.5,0.5){$\bullet$};
\node(ld)at(-0.5,-0.5){$\bullet$}; \node(rd)at(0.5,-0.5){$\bullet$};
\draw[->,blue](l)--node[fill=white,inner sep=2]{$p$}(r);
\draw[->](l)--node[fill=white,inner sep=2]{$q$}(ld); \draw[->](rd)--node[fill=white,inner sep=2]{$q'$}(r);
\draw[->](l)--node[above]{$p'p$}(lu); \draw[->](ru)--node[above]{$pp''$}(r);
\node at(-1.1,-0.7){$\vdots$}; \node at(1.1,-0.7){$\vdots$};
\node at(-1.1,1.2){$\vdots$}; \node at(1.1,1.2){$\vdots$};
\node at(-2.2,0){$T$}; \node at(2.2,0){$T'$};
\end{tikzpicture}
\]
where $p$ and $q$ (resp., $q'$) have no common arrows as paths, and $p'$ and $p''$ are not constant.
\end{definition}

\begin{definition}\label{quasi-graph}\cite[Definition 3.12]{ALP}
For $T, T' \in\twoscx A(D)$, the following diagram is called a {\it quasi-graph map from $T$ to $T'$}:
\[
\begin{tikzpicture}[baseline=7mm,scale=1]
\fill[pattern=north east lines](-1.2,2.3)--(-1.2,1.5)--(2,1.5)--(2,2.3);
\fill[pattern=north east lines](2,-1.8)--(2,-1)--(-1.2,-1)--(-1.2,-1.8);
\node(l)at(-1.7,0.25){$\bullet$}; \node(r)at(0.5,0.25){$\bullet$};
\node(l')at(-0.5,-0.25){$\bullet$}; \node(r')at(1.7,-0.25){$\bullet$};
\node(l'')at(-0.5,0.75){$\bullet$}; \node(r'')at(1.7,0.75){$\bullet$};
\node(lu)at(-0.5,1.5){$\bullet$}; \node(ru)at(1.7,1.5){$\bullet$};
\node(ld)at(-0.5,-1){$\bullet$}; \node(rd)at(1.7,-1){$\bullet$};
\draw[->](l)--node[fill=white,inner sep=2]{$g$}(l'); \draw[->](r)--node[fill=white,inner sep=2]{$g$}(r'); 
\draw[->](l)--node[fill=white,inner sep=2]{$f$}(l''); \draw[->](r)--node[fill=white,inner sep=2]{$f$}(r'');
\draw[blue,double](l)--(r); \draw[blue,double](l')--(r'); \draw[blue,double](l'')--(r'');
\draw[blue,double](lu)--(ru); \draw[blue,double](ld)--(rd);
\node at(-0.5,1.25){$\vdots$}; \node at(-0.5,-0.5){$\vdots$};
\node at(1.7,1.25){$\vdots$}; \node at(1.7,-0.5){$\vdots$};
\draw(lu)--(-1.2,1.5)--(-1.2,2.3) (ru)--(2,1.5)--(2,2.3);
\draw(rd)--(2,-1)--(2,-1.8) (ld)--(-1.2,-1)--(-1.2,-1.8);
\node at(-2.2,0.25){$T$}; \node at(2.2,0.25){$T'$};
\node[rectangle,draw,fill=white]at(0.4,1.9){U}; \node[rectangle,draw,fill=white]at(0.4,-1.4){L};
\node at(0,0){$\circlearrowright$}; \node at(0,0.5){$\circlearrowright$};
\end{tikzpicture}
\]
where the double lines are isomorphisms and $\fbox{U}$ and $\fbox{L}$ are
\[
\begin{tikzpicture}[baseline=5mm,scale=1]
\node(l)at(-1.7,0){$\bullet$}; \node(r)at(0.5,0){$\bullet$}; \node at(-2,1.2){$(e)$};
\node(lu)at(-0.5,0.5){$P$}; \node(ru)at(1.7,0.5){$P'$};
\node(luu)at(-1.7,1){$\bullet$}; \node(ruu)at(0.5,1){$\bullet$};
\draw[->](l)--node[fill=white,inner sep=2]{$q$}(lu);
\draw[->](r)--node[fill=white,inner sep=2]{$q'$}(ru);
\draw[->](luu)--(lu); \draw[->](ruu)--(ru);
\draw[blue,double](l)--(r);
\node at(-1.1,1.3){$\vdots$}; \node at(1.1,1.3){$\vdots$};
\end{tikzpicture}
\hspace{5mm}\text{or}\hspace{5mm}
\begin{tikzpicture}[baseline=5mm,scale=1]
\node(l)at(-0.5,0){$\bullet$}; \node(r)at(1.7,0){$\bullet$}; \node at(-2.2,1.2){$(f)$};
\node(lu)at(-1.7,0.5){$P$}; \node(ru)at(0.5,0.5){$P'$};
\node(luu)at(-0.5,1){$\bullet$}; \node(ruu)at(1.7,1){$\bullet$};
\draw[->](lu)--node[fill=white,inner sep=2]{$r$}(l);
\draw[->](ru)--node[fill=white,inner sep=2]{$r'$}(r);
\draw[->](ru)--(ruu); \draw[->](lu)--(luu);
\draw[blue,double](l)--(r);
\node at(-1.1,1.3){$\vdots$}; \node at(1.1,1.3){$\vdots$};
\end{tikzpicture}
\]
and there is no $p\in\Hom_{A(D)}(P,P')$ such that $pq=q'$ or $r=r'p$, which implies $q'\neq 0$ and $r\neq 0$. Note that $T$ and $T'$ have the common morphisms $\ldots,f,g,\ldots$ which are not contained in $\fbox{U}$ and $\fbox{L}$.
For a quasi-graph map from $T$ to $T'$, the common morphisms naturally induces a unique morphism in $\Hom_{\Kb(\proj A(D))}(T, T'[1])$, called a {\it quasi-graph map representative}.
\end{definition}

Regarding two-term short string complexes as homotopy strings defined as in \cite{ALP} (see also \cite{BM}), we can obtain the following result from \cite{ALP}.

\begin{proposition}\cite[Propositions 4.1 and 4.8]{ALP}\label{basis maps}
%For $T,T'\in\twoscx A(D)$, a basis for $\Hom_{\Kb(\proj A(D))}(T, T'[1])$ consists of singleton single maps and quasi-graph map representatives.
For $T,T'\in\twoscx A(D)$, singleton single maps and quasi-graph map representatives give a basis of $\Hom_{\Kb(\proj A(D))}(T, T'[1])$.
\end{proposition}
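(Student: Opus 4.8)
The plan is to realize the objects of $\twoscx A(D)$ as bounded two-term homotopy strings in the sense of \cite{ALP} and then read off the asserted basis directly from their classification of morphisms. A two-term short string complex $T=[T^{-1}\xrightarrow{f}T^0]$ of shape (1)--(3) in Definition \ref{short string cpx} is determined by a walk in $Q(D)$ whose letters are the short maps between the projectives $P_d$ appearing in $f$; since every differential component is a short map (no power of $t$ occurs), this walk is precisely a homotopy string, with the direct and inverse letters recording the zigzag shape of (1)--(3). Thus $T\mapsto(\text{underlying homotopy string})$ embeds $\twoscx A(D)$ into the homotopy strings studied in \cite{ALP}, and $\Hom_{\mathsf{K}^{\rm b}(\proj A(D))}(T,T'[1])$ is identified with the corresponding morphism space between string objects. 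Note that for fixed two-term $T,T'$ this space is finite-dimensional over $k$, since the overlaps and connecting arrows allowed by the string combinatorics have bounded length.

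Next I would set up the dictionary between the two vocabularies. By \cite[Propositions 4.1 and 4.8]{ALP}, the morphisms between two homotopy string objects in the homotopy category are spanned freely by two families: \emph{single maps}, each supported on one arrow joining the two strings, and \emph{graph maps}, each supported on a maximal common substring along which the strings agree (the double lines), subject to boundary conditions at the two ends of the overlap. The single maps landing in the shifted object $T'[1]$ are exactly the four configurations (a)--(d) of Definition \ref{def singleton single map}: the arrow $p$ is the connecting short map, and the hypotheses that $p$ and $q$ (resp.\ $q'$) share no arrows and that $p',p''$ be nonconstant are ALP's incidence conditions ensuring $p$ is not absorbed into a differential. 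A graph map $T\to T'$ whose overlap is capped by the patterns $(e)/(f)$ of Definition \ref{quasi-graph}, together with the requirement that no $p$ satisfy $pq=q'$ or $r=r'p$, is precisely what \cite{ALP} call a quasi-graph map: it is zero as a degree-zero chain map up to homotopy but carries a nontrivial homotopy, and ALP's standard construction converts it into a well-defined nonzero class in $\Hom(T,T'[1])$ --- our quasi-graph map representative. Hence ALP's two spanning families correspond bijectively to our singleton single maps and quasi-graph map representatives.

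Before invoking their theorem I must account for the fact that $A(D)$ is a complete, possibly infinite-dimensional, gentle algebra, whereas \cite{ALP} work with finite-dimensional gentle algebras. Here I would pass to the finite-dimensional quotient $A(D)/(t^N)$ for $N\gg 0$: for two fixed bounded complexes only finitely many short maps enter the differentials, and any map or null-homotopy between them factors through bounded length, so that the inclusion $\mathsf{K}^{\rm b}(\proj A(D))\hookrightarrow \mathsf{K}^{\rm b}(\proj A(D)/(t^N))$ induces an isomorphism on $\Hom(T,T'[1])$ for $N$ large. On the finite-dimensional gentle algebra $A(D)/(t^N)$ the classification of \cite{ALP} applies verbatim, and combined with the dictionary above it yields exactly the asserted basis; since the basis elements are themselves $t$-free short configurations, they stabilise as $N\to\infty$, finishing the argument.

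The main obstacle is not the appeal to \cite{ALP} but the bookkeeping of this dictionary: one must verify that each incidence and cap condition in Definitions \ref{def singleton single map} and \ref{quasi-graph} matches, symbol for symbol, the condition under which ALP declare a configuration to be a single map or a (quasi-)graph map, so that nothing is double-counted or omitted once the shift $[1]$ is taken into account. In particular the correct treatment of the two caps $(e)$ and $(f)$ at the ends of a graph-map overlap, and of the degenerate stalk cases where $T$ or $T'$ is a single $P_d$ (corresponding to $d^{\ast}_{\pm}$), is where the most care is required.
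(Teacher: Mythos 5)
Your core route coincides with the paper's: its entire proof consists of regarding two-term short string complexes as homotopy strings and quoting \cite[Propositions 4.1 and 4.8]{ALP}, with Definitions \ref{def singleton single map} and \ref{quasi-graph} set up precisely so that ALP's single maps and quasi-graph maps become the two families in the statement. Where your write-up genuinely breaks is the extra step you inserted to bridge ALP's finite-dimensional setting and the complete algebra $A(D)$. The claim that $-\otimes_{A(D)}A(D)/(t^N)$ induces an isomorphism on $\Hom_{\mathsf{K}^{\rm b}}(T,T'[1])$ for $N\gg 0$, and the accompanying claim that this space is finite dimensional over $k$, are both false. Take $d\in D$ such that $d^{\ast}$ has an endpoint at a puncture, and let $T=P_d$ concentrated in degree $-1$ and $T'=P_d$ concentrated in degree $0$; these are $T_{d^{\ast}_-}$ and $T_{d^{\ast}_+}$ (see the proof of Proposition \ref{D-arc}), so both lie in $\twoscx A(D)$. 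Then $\Hom_{\mathsf{K}^{\rm b}(\proj A(D))}(T,T'[1])=\End_{A(D)}(P_d)=e_dA(D)e_d$, which contains $k[[t]]\cdot\mathrm{id}$ because all powers of the special $d$-cycle are nonzero; it is infinite dimensional over $k$, and the comparison map to $\End_{A(D)/(t^N)}(\overline{P_d})$ kills $t^N\End_{A(D)}(P_d)\neq 0$, so it is injective for no $N$ and nothing stabilises. Moreover, even over the quotient your citation would be illegitimate: $t$ is a sum of special cycles, so $t^N=0$ is not a monomial relation and $A(D)/(t^N)$ is \emph{not} gentle. This is exactly why the paper, when it really needs a finite-dimensional reduction (Proposition \ref{2-ips2-scx}), passes to the special biserial algebra $A(D)/(t)$ and uses string/band modules together with Kimura's Proposition \ref{central nilpotent}, which yields a bijection on presilting complexes and reflects Hom-vanishing, but never an isomorphism of Hom spaces.

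The same example exposes a defect in your dictionary, at the very spot you flagged as mere bookkeeping. ALP's basis does not consist of two families but of graph maps, singleton single maps, singleton double maps, and quasi-graph map representatives. For two-term complexes the double maps vanish for degree reasons, but graph maps $T\to T'[1]$ do not: whenever $\add T^{-1}\cap\add T'^{0}\neq 0$, the identity on a common summand extends by zero to a chain map $T\to T'[1]$ that is not null-homotopic (null-homotopic maps have the form $h^0d_T+d_{T'}h^{-1}$ with radical differentials, so they cannot produce an identity component). In the example above this is $\mathrm{id}_{P_d}\in\End(P_d)$, which is neither a singleton single map (those are induced by short maps, hence radical) nor a quasi-graph map representative (a quasi-graph map $T\to T'$ needs a degree-preserving overlap, and here there is none). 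So a faithful derivation from \cite{ALP} must additionally argue that graph maps do not occur for the pairs under consideration --- for pairs $T_\gamma$, $T_\delta$ coming from compatible laminates this is a geometric statement, since a common summand of $T_\gamma^{-1}$ and $T_\delta^{0}$ forces $\gamma$ and $\delta$ to cross some arc of $D$ with opposite signs and hence to intersect --- or the statement must be restricted accordingly. The paper is itself silent on this point, but your proof asserts the two-family correspondence explicitly, and as written that assertion, like the $t^N$ stabilisation, does not hold.
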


%%%
Let $\gamma$ and $\delta$ be NCG $D$-laminates.
By Propositions \ref{D-arc} and \ref{basis maps}, $\Hom_{\Kb(\proj A(D))}(T_{\gamma}, T_{\delta}[1])$ has a basis consisting of singleton single maps and quasi-graph map representatives.
It follows from the definition that a singleton single map in $\Hom_{\Kb(\proj A(D))}(T_{\gamma}, T_{\delta}[1])$ is given by one of the following local figures:
\[
(a)
\begin{tikzpicture}[baseline=7mm,scale=1]
\coordinate(l)at(165:1); \coordinate(lu)at(110:1); \coordinate(ru)at(70:1); \coordinate(r)at(15:1);
\draw(l)--(lu) (ru)--(r);
\fill(l)circle(1mm); \fill(lu)circle(1mm); \fill(ru)circle(1mm); \fill(r)circle(1mm);
\draw(0,-0.5)circle(1mm);
\draw(-0.3,-0.5)..controls(-0.3,-0.3)and(0,0)..node[below,pos=0.9]{$\delta$}($(r)!0.5!(ru)$);
\draw[dotted](-0.3,-0.5)arc(-180:10:2.9mm); 
\draw(0.2,-0.5)..controls(0.2,-0.3)and(0,0)..node[below,pos=0.9]{$\gamma$}($(l)!0.5!(lu)$);
\draw[dotted](0.2,-0.5)arc(0:-180:2mm);
\draw[->,blue](-0.5,0.6)--node[fill=white,inner sep=2]{$p$}(0.5,0.6);
\end{tikzpicture}
\hspace{4mm}
(b)
\begin{tikzpicture}[baseline=7mm,scale=1]
\coordinate(l)at(165:1); \coordinate(lu)at(110:1); \coordinate(ru)at(70:1); \coordinate(r)at(15:1);
\coordinate(rd)at(-15:1); \coordinate(rdd)at(-70:1);
\draw(l)--(lu) (ru)--(r) (rd)--(rdd);
\fill(l)circle(1mm); \fill(lu)circle(1mm); \fill(ru)circle(1mm); \fill(r)circle(1mm);
\fill(rd)circle(1mm); \fill(rdd)circle(1mm);
\draw(0,-0.5)circle(1mm);
\draw(-0.3,-0.5)..controls(-0.3,-0.3)and(0,0)..node[below,pos=0.9]{$\delta$}($(r)!0.5!(ru)$);
\draw[dotted](-0.3,-0.5)arc(-180:10:2.9mm); 
\draw($(rd)!0.5!(rdd)$)--node[below,pos=0.9]{$\gamma$}($(l)!0.5!(lu)$);
\draw[->,blue](-0.5,0.6)--node[fill=white,inner sep=2]{$p$}(0.5,0.6);
\end{tikzpicture}
\hspace{4mm}
(c)
\begin{tikzpicture}[baseline=7mm,scale=1]
\coordinate(l)at(165:1); \coordinate(lu)at(110:1); \coordinate(ru)at(70:1); \coordinate(r)at(15:1);
\coordinate(ld)at(-165:1); \coordinate(ldd)at(-110:1);
\draw(l)--(lu) (ru)--(r) (ld)--(ldd);
\fill(l)circle(1mm); \fill(lu)circle(1mm); \fill(ru)circle(1mm); \fill(r)circle(1mm);
\fill(ld)circle(1mm); \fill(ldd)circle(1mm);
\draw(0,-0.5)circle(1mm);
\draw(0.2,-0.5)..controls(0.2,-0.3)and(0,0)..node[below,pos=0.9]{$\gamma$}($(l)!0.5!(lu)$);
\draw[dotted](0.2,-0.5)arc(0:-180:2mm);
\draw($(ld)!0.5!(ldd)$)--node[below,pos=0.9]{$\delta$}($(r)!0.5!(ru)$);
\draw[->,blue](-0.5,0.6)--node[fill=white,inner sep=2]{$p$}(0.5,0.6);
\end{tikzpicture}
\hspace{4mm}
(d)
\begin{tikzpicture}[baseline=7mm,scale=1]
\coordinate(l)at(165:1); \coordinate(lu)at(110:1); \coordinate(ru)at(70:1); \coordinate(r)at(15:1);
\coordinate(rd)at(-15:1); \coordinate(rdd)at(-70:1); \coordinate(ld)at(-165:1); \coordinate(ldd)at(-110:1);
\draw(l)--(lu) (ru)--(r) (ld)--(ldd) (rd)--(rdd);
\fill(l)circle(1mm); \fill(lu)circle(1mm); \fill(ru)circle(1mm); \fill(r)circle(1mm);
\fill(ld)circle(1mm); \fill(ldd)circle(1mm); \fill(rd)circle(1mm); \fill(rdd)circle(1mm);
\draw(0,-0.5)circle(1mm);
\draw($(ld)!0.5!(ldd)$)--node[below,pos=0.9]{$\delta$}($(r)!0.5!(ru)$);
\draw($(rd)!0.5!(rdd)$)--node[below,pos=0.9]{$\gamma$}($(l)!0.5!(lu)$);
\draw[->,blue](-0.5,0.6)--node[fill=white,inner sep=2]{$p$}(0.5,0.6);
\end{tikzpicture}
\]
where $p$ is the associated short map. 
For a quasi-graph map representative in $\Hom_{\Kb(\proj A(D))}(T_{\gamma}, T_{\delta}[1])$, local figures for $\fbox{U}$ and $\fbox{L}$ in Definition \ref{quasi-graph} are given by one of the following figures respectively:
\[
\begin{tikzpicture}[baseline=-1mm,scale=1.4]
\coordinate(l)at(210:1); \coordinate(lu)at(150:1); \coordinate(ru)at(70:1); \coordinate(r)at(15:1);
\draw(l)--(lu) (ru)--(r);
\fill(l)circle(0.7mm); \fill(lu)circle(0.7mm); \fill(ru)circle(0.7mm); \fill(r)circle(0.7mm);
\draw(0,-0.5)circle(0.7mm);
\draw($(l)!0.6!(lu)$)--node[above]{$\delta$}($(r)!0.6!(ru)$);
\draw(0.2,-0.5)..controls(0.2,-0.3)and(0,0)..node[below,pos=0.8]{$\gamma$}($(l)!0.4!(lu)$);
\draw[dotted](0.2,-0.5)arc(0:-180:2mm);
\draw[->,blue](-0.7,0.05)--node[below,pos=0.7]{$q'$}(0.55,0.55); \node at(155:1.4){$(e)$};
\end{tikzpicture}
\hspace{5mm}\text{or}\hspace{5mm}
\begin{tikzpicture}[baseline=-1mm,scale=1.4]
\coordinate(l)at(210:1); \coordinate(lu)at(150:1); \coordinate(ru)at(70:1); \coordinate(r)at(15:1);
\coordinate(rd)at(-15:1); \coordinate(rdd)at(-70:1);
\draw(l)--(lu) (ru)--(r) (rd)--(rdd);
\fill(l)circle(0.7mm); \fill(lu)circle(0.7mm); \fill(ru)circle(0.7mm); \fill(r)circle(0.7mm); \fill(rd)circle(0.7mm); \fill(rdd)circle(0.7mm);
\draw(0,-0.5)circle(0.7mm);
\draw($(l)!0.6!(lu)$)--node[above]{$\delta$}($(r)!0.6!(ru)$);
\draw($(rd)!0.5!(rdd)$)..controls(0.3,-0.4)and(0,-0.2)..node[below,pos=0.8]{$\gamma$}($(l)!0.4!(lu)$);
\draw[->,blue](-0.7,0.05)--node[below,pos=0.8]{$q'$}(0.55,0.55);
\draw[<-,blue](-0.7,-0.05)..controls(0,-0.1)and(0.3,-0.3)..node[above,pos=0.7]{$q$}(0.55,-0.45);
\end{tikzpicture}
\]
\[
\begin{tikzpicture}[baseline=-1mm,scale=1.4]
\coordinate(l)at(210:1); \coordinate(lu)at(150:1); \coordinate(ru)at(70:1); \coordinate(r)at(15:1);
\coordinate(rd)at(-15:1); \coordinate(rdd)at(-70:1);
\draw(l)--(lu) (rd)--(rdd);
\fill(l)circle(0.7mm); \fill(lu)circle(0.7mm); \fill(rd)circle(0.7mm); \fill(rdd)circle(0.7mm);
\draw(0,0.5)circle(0.7mm);
\draw($(l)!0.4!(lu)$)--node[below]{$\gamma$}($(rd)!0.6!(rdd)$);
\draw(0.2,0.5)..controls(0.2,0.3)and(0,0)..node[above,pos=0.8]{$\delta$}($(l)!0.6!(lu)$);
\draw[dotted](0.2,0.5)arc(0:180:2mm);
\draw[<-,blue](-0.7,-0.05)--node[above,pos=0.7]{$r$}(0.55,-0.55); \node at(155:1.4){$(f)$};
\end{tikzpicture}
\hspace{5mm}\text{or}\hspace{5mm}
\begin{tikzpicture}[baseline=-1mm,scale=1.4]
\coordinate(l)at(210:1); \coordinate(lu)at(150:1); \coordinate(ru)at(70:1); \coordinate(r)at(15:1);
\coordinate(rd)at(-15:1); \coordinate(rdd)at(-70:1);
\draw(l)--(lu) (ru)--(r) (rd)--(rdd);
\fill(l)circle(0.7mm); \fill(lu)circle(0.7mm); \fill(ru)circle(0.7mm); \fill(r)circle(0.7mm); \fill(rd)circle(0.7mm); \fill(rdd)circle(0.7mm);
\draw(0,0.5)circle(0.7mm);
\draw($(l)!0.4!(lu)$)--node[below]{$\gamma$}($(rd)!0.6!(rdd)$);
\draw($(r)!0.5!(ru)$)..controls(0.3,0.4)and(0,0.2)..node[above,pos=0.8]{$\delta$}($(l)!0.6!(lu)$);
\draw[<-,blue](-0.7,-0.05)--node[above,pos=0.7]{$r$}(0.55,-0.55);
\draw[->,blue](-0.7,0.05)..controls(0,0.1)and(0.3,0.3)..node[below,pos=0.7]{$r'$}(0.55,0.45);
\end{tikzpicture}
\]
where the left figure of $(e)$ (resp., $(f)$) is in the case of $q=0$ (resp., $r'=0$). 
For example, if both of $\fbox{U}$ and $\fbox{L}$ are given by the right figure of $(e)$, then there is the following intersection point: 
\[
\begin{tikzpicture}[baseline=7mm,scale=1]
\coordinate(ru)at(2,0.7); \coordinate(rd)at(2,-0.7); \coordinate(rruu)at(3,1); \coordinate(rru)at(3.7,0.3);
\coordinate(rrd)at(3.7,-0.3); \coordinate(rrdd)at(3,-1);
\coordinate(lu)at(-2,0.7); \coordinate(ld)at(-2,-0.7); \coordinate(lluu)at(-3,1); \coordinate(llu)at(-3.7,0.3);
\coordinate(lld)at(-3.7,-0.3); \coordinate(lldd)at(-3,-1);
\draw(ru)--(rd) (rruu)--(rru) (rrd)--(rrdd);
\draw(lu)--(ld) (lluu)--(llu) (lld)--(lldd);
\fill(ru)circle(1mm); \fill(rd)circle(1mm); \fill(rruu)circle(1mm); \fill(rru)circle(1mm); 
\fill(rrd)circle(1mm); \fill(rrdd)circle(1mm);
\fill(lu)circle(1mm); \fill(ld)circle(1mm); \fill(lluu)circle(1mm); \fill(llu)circle(1mm); 
\fill(lld)circle(1mm); \fill(lldd)circle(1mm);
\draw(2.7,-0.5)circle(1mm); \draw(-2.7,0.5)circle(1mm);
\draw[blue]($(lldd)!0.5!(lld)$)--node[above,pos=0.65]{$\delta$}($(rruu)!0.5!(rru)$);
\draw[blue]($(lluu)!0.5!(llu)$)..controls(-3,-0.7)and(3,0.7)..node[below,pos=0.6]{$\gamma$}($(rrdd)!0.5!(rrd)$);
\end{tikzpicture}
\]

From the above description, we find that $\gamma$ is not in positive position for $\delta$ whenever there exists a singleton single map or a quasi-graph map representatives in $\Hom_{\Kb(\proj A(D))}(T_{\gamma},T_{\delta}[1])$. Conversely, if $\gamma$ is not in positive position for $\delta$, then they have a common intersection point which gives rise to either a singleton single map or a quasi-graph map representatives as in the above figure respectively. Thus, we have the following.

\begin{proposition} \label{pos.pos} 
The following conditions are equivalent for two NCG $D$-laminates $\gamma$ and $\delta$:
\begin{enumerate}
\item $\Hom_{\Kb(\proj A(D))}(T_{\gamma}, T_{\delta}[1])=0$;
\item $\gamma$ is in positive position for $\delta$.
\end{enumerate}
\end{proposition}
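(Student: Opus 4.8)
The plan is to read the basis of $\Hom_{\mathsf{K}^{\rm b}(\proj A(D))}(T_{\gamma}, T_{\delta}[1])$ furnished by Proposition \ref{basis maps} off the local figures $(a)$--$(f)$ displayed above the statement, and to match each basis element with a geometric crossing of $\gamma$ and $\delta$. By Proposition \ref{basis maps} the space $\Hom_{\mathsf{K}^{\rm b}(\proj A(D))}(T_{\gamma}, T_{\delta}[1])$ vanishes if and only if there is neither a singleton single map nor a quasi-graph map representative from $T_{\gamma}$ to $T_{\delta}[1]$. Hence it suffices to establish a bijection
\[
\{\text{basis elements of } \Hom_{\mathsf{K}^{\rm b}(\proj A(D))}(T_{\gamma}, T_{\delta}[1])\} \;\longleftrightarrow\; \{\text{points at which } \gamma \text{ crosses } \delta \text{ from left to right}\},
\]
where the crossing direction is measured with the orientation convention of positive position, namely each segment of $\gamma$ and of $\delta$ inside a polygon $\triangle_{v}$ is oriented so that its marked point $v$ lies on the right. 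Granting such a bijection, the right-hand set is empty exactly when every intersection of $\gamma$ and $\delta$ is a right-to-left crossing, which is the definition of $\gamma$ being in positive position for $\delta$; this gives the equivalence of (1) and (2).

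First I would treat the singleton single maps. By Proposition \ref{D-arc} the indecomposable summands of $T_{\gamma}$ and of $T_{\delta}$ are the projectives attached to the $D$-segments of $\gamma$ and $\delta$, and by Proposition \ref{bij seg short} a short map between two such summands is exactly a $D$-segment inside a single polygon $\triangle_{v}$. Since a singleton single map is induced by one short map $p$, it is supported at a single point of some $\triangle_{v}$ and is recorded by one of the figures $(a)$--$(d)$, the four cases corresponding to whether each of the two strands continues across $D$ or terminates at $\partial S$. In each of these figures the orientation convention puts $v$ on the right of both oriented strands, and one reads off directly that $\gamma$ crosses $\delta$ from left to right; conversely, any transversal crossing of $\gamma$ over $\delta$ from left to right is locally one of $(a)$--$(d)$ and yields precisely one such short map. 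This settles the singleton single part of the bijection.

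It remains to match the quasi-graph map representatives, which is the delicate part. A quasi-graph map records a maximal common run of $T_{\gamma}$ and $T_{\delta}$, that is, a maximal chain of $D$-segments along which $\gamma$ and $\delta$ are isotopic, together with the two end-configurations of Definition \ref{quasi-graph} drawn locally in $(e)$ and $(f)$. The content to verify is that the non-degeneracy conditions built into a quasi-graph map, namely $q'\neq 0$, $r\neq 0$, and the non-existence of $p$ with $pq=q'$ or $r=r'p$, are exactly the algebraic signature of the two parallel strands separating by crossing each other, from the left of $\delta$, at the ends of the common run. The main obstacle is precisely this translation: I must show that when $\gamma$ and $\delta$ run parallel and then diverge to the same side of $\delta$ (so with no left-to-right crossing), the end-maps factor so that one of the forbidden factorizations $pq=q'$ or $r=r'p$ occurs, or $q'=0$ or $r=0$, and hence no quasi-graph map exists; while a genuine left-to-right separation produces exactly one quasi-graph map representative. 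Working through the finitely many local possibilities $(e)$ and $(f)$ and checking that the quasi-graph crossings are disjoint from the transversal crossings of the previous paragraph, so that no intersection point is counted twice, completes the bijection and hence the proof.
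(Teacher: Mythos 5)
Your proposal follows essentially the same route as the paper: the paper's entire proof is Proposition \ref{basis maps} together with the displayed local configurations $(a)$--$(f)$, which are exactly the dictionary you describe between singleton single maps (transversal crossings inside one polygon) plus quasi-graph map representatives (crossings at the ends of a common run of $\gamma$ and $\delta$) and the non-positive intersection points. The ``delicate part'' you flag for quasi-graph maps is precisely what the paper settles by the figures $(e)$ and $(f)$, so once your local case-check is carried out your argument coincides with the paper's.
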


\begin{proof}
The assertion follows from Proposition \ref{basis maps} and the above observations.
\end{proof}

We are ready to state our results. 

\begin{proposition} \label{D-ips}
   The following hold.
\begin{enumerate}
   \item The map $T_{(-)}$ in Proposition \ref{D-arc} restricts to a bijection
   $$
   \{\text{non-closed $D$-laminates}\} \rightarrow \twoips A.
   $$
   \item Two non-closed $D$-laminates $\gamma$ and $\delta$ are compatible if and only if $T_{\gamma}\oplus T_{\delta}$ is presilting.
\end{enumerate}
\end{proposition}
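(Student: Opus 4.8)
The plan is to reduce both statements to the vanishing of the degree-one self-extension groups and then to feed the resulting geometric conditions through Propositions \ref{2-ips2-scx}, \ref{D-arc} and \ref{pos.pos}. Throughout I use that a two-term complex $P$ is presilting precisely when $\Hom_{\mathsf{K}^{\rm b}(\proj A(D))}(P,P[1])=0$: the Hom-complex $\Hom^{\bullet}(P,P)$ of a two-term complex is concentrated in degrees $-1,0,1$, so $\Hom_{\mathsf{K}^{\rm b}(\proj A(D))}(P,P[m])=0$ holds automatically for every $m\ge 2$ and only the degree-one condition is substantive.

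For part (1) I would first recall from Proposition \ref{D-arc} that $T_{(-)}$ is already a bijection between NCG $D$-laminates and $\twoscx A(D)$, and that every element of $\twoscx A(D)$ is indecomposable by definition. By Proposition \ref{2-ips2-scx} we have $\twoips A(D)\subseteq\twoscx A(D)$, so each indecomposable two-term presilting complex equals $T_{\gamma}$ for a unique NCG $D$-laminate $\gamma$. Since $T_{\gamma}$ is indecomposable, $T_{\gamma}\in\twoips A(D)$ if and only if $\Hom_{\mathsf{K}^{\rm b}(\proj A(D))}(T_{\gamma},T_{\gamma}[1])=0$, which by Proposition \ref{pos.pos} applied with $\delta=\gamma$ is equivalent to $\gamma$ being in positive position for itself. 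The remaining step is the geometric observation that an NCG $D$-laminate is in positive position for itself exactly when it is non-self-intersecting: if $\gamma$ has no self-intersection the condition is vacuous, while at a transversal self-crossing the two local branches of $\gamma$ meet, and orienting both so that $v$ lies on the right forces one branch to cross the other from left to right, contradicting positive position. As a non-self-intersecting NCG $D$-laminate is by definition a non-closed $D$-laminate, this identifies the preimage of $\twoips A(D)$ under $T_{(-)}$ with the set of non-closed $D$-laminates, yielding the bijection.

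For part (2) I would expand the presilting condition for $T_{\gamma}\oplus T_{\eta}$ into its four self-extension groups. The diagonal groups $\Hom_{\mathsf{K}^{\rm b}(\proj A(D))}(T_{\gamma},T_{\gamma}[1])$ and $\Hom_{\mathsf{K}^{\rm b}(\proj A(D))}(T_{\eta},T_{\eta}[1])$ vanish by part (1), since $\gamma$ and $\eta$ are non-closed $D$-laminates. Hence $T_{\gamma}\oplus T_{\eta}$ is presilting if and only if the two off-diagonal groups $\Hom_{\mathsf{K}^{\rm b}(\proj A(D))}(T_{\gamma},T_{\eta}[1])$ and $\Hom_{\mathsf{K}^{\rm b}(\proj A(D))}(T_{\eta},T_{\gamma}[1])$ both vanish, which by Proposition \ref{pos.pos} means that $\gamma$ is in positive position for $\eta$ \emph{and} $\eta$ is in positive position for $\gamma$ simultaneously. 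The final combinatorial point is that these two one-sided conditions hold together precisely when $\gamma$ and $\eta$ are disjoint: a shared intersection point would be crossed from right to left when viewed from one side and from left to right from the other, so the two conditions are incompatible unless the curves do not intersect, whereas disjoint curves satisfy both vacuously. This is exactly compatibility of $\gamma$ and $\eta$.

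The main obstacle is not algebraic but lies in the two positive-position observations: that ``in positive position for itself'' characterizes non-self-intersection, and that the two one-sided positive-position conditions for a pair force disjointness. Both rest on the antisymmetry of the right-to-left crossing condition under interchanging the roles of the two strands; once Proposition \ref{pos.pos} is available they reduce the presilting conditions entirely to the combinatorics of intersections of $D$-laminates.
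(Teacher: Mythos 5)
Your proposal is correct and follows essentially the same route as the paper: the paper's proof likewise reduces everything to Proposition \ref{pos.pos} together with the geometric fact that mutual positive position is equivalent to disjointness (applied diagonally for part (1), and to a pair for part (2)). You merely spell out the details the paper leaves implicit, namely the vanishing of $\Hom(P,P[m])$ for $m\ge 2$ for two-term complexes and the antisymmetry argument at a crossing, both of which are accurate.
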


\begin{proof}
 In general, two NCG $D$-laminates $\gamma$ and $\delta$ do not intersect if and only if they are in positive position for each other. 
 By Proposition \ref{pos.pos}, this is equivalent to the condition that $\Hom(T_{\gamma}, T_{\delta}[1])=\Hom(T_{\delta}, T_{\gamma}[1])=0$. 
 Since a non-closed $D$-laminate is precisely a non-self-intersecting NCG $D$-laminate, we get (1) and (2).
\end{proof}

\begin{theorem} \label{D-twosilt}
The map $\mathcal{X}\mapsto T_{\mathcal{X}}:=\bigoplus_{\gamma\in \mathcal{X}}T_{\gamma}$ gives bijections
\begin{eqnarray*}
\{\text{reduced $D$-laminations}\} \to \twopresilt A(D),\ \text{and}
\end{eqnarray*}
\begin{eqnarray*}
\{\text{complete $D$-laminations}\} \to \twosilt A(D)
\end{eqnarray*}
such that $C(\mathcal{X})=C(T_{\mathcal{X}})$ for all reduced $D$-laminations $\mathcal{X}$. 
In particular, we have 
$$\mathcal{F}(D)=\mathcal{F}(A(D)) \quad \text{and}\quad \vert \mathcal{F}(D)\vert =\vert \mathcal{F}(A(D))\vert .$$
\end{theorem}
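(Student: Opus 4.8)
The plan is to assemble the statement from the building blocks already in place, handling the two bijections and the cone equality in turn.

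First I would establish the presilting bijection. Given a reduced $D$-lamination $\mathcal{X}=\{\gamma_1,\ldots,\gamma_r\}$, each $\gamma_i$ is a non-closed $D$-laminate, so by Proposition \ref{D-ips}(1) each $T_{\gamma_i}$ lies in $\twoips A(D)$, and distinct $\gamma_i$ yield non-isomorphic summands by the injectivity in Proposition \ref{D-arc}; hence $T_{\mathcal{X}}$ is basic. To see that $T_{\mathcal{X}}$ is presilting, I would use that all summands are two-term, so $\Hom_{\mathsf{K}^{\rm b}(\proj A(D))}(T_{\mathcal{X}}, T_{\mathcal{X}}[m])=0$ holds automatically for $m\ge 2$, reducing the condition to the degree-one part. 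By additivity this part is $\bigoplus_{i,j}\Hom_{\mathsf{K}^{\rm b}(\proj A(D))}(T_{\gamma_i}, T_{\gamma_j}[1])$; the diagonal terms vanish because each $T_{\gamma_i}$ is presilting, while for $i\neq j$ the pairwise compatibility of $\gamma_i$ and $\gamma_j$ gives the vanishing by Proposition \ref{D-ips}(2). Thus $T_{\mathcal{X}}\in\twopresilt A(D)$.

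For bijectivity I would run the argument backwards. Injectivity follows from the Krull--Schmidt property of $\mathsf{K}^{\rm b}(\proj A(D))$ (Proposition \ref{Kb KS}): an isomorphism $T_{\mathcal{X}}\cong T_{\mathcal{X}'}$ matches indecomposable summands, and the bijection of Proposition \ref{D-ips}(1) then forces $\mathcal{X}=\mathcal{X}'$. For surjectivity, given $P\in\twopresilt A(D)$, I decompose $P=\bigoplus_i P_i$ into indecomposables; each $P_i\in\twoips A(D)$, so $P_i=T_{\gamma_i}$ for a unique non-closed $D$-laminate $\gamma_i$ by Proposition \ref{D-ips}(1); basicness makes the $\gamma_i$ pairwise distinct, and the presilting condition on $P$ gives $\Hom_{\mathsf{K}^{\rm b}(\proj A(D))}(P_i,P_j[1])=0$ for $i\neq j$, whence pairwise compatibility by Proposition \ref{D-ips}(2). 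So $\mathcal{X}=\{\gamma_i\}$ is a reduced $D$-lamination with $T_{\mathcal{X}}=P$.

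Next I would promote this to the silting bijection purely by counting. The number of $\bullet$-arcs equals the number of vertices of $Q(D)$, so $|D|=|A(D)|=:n$. By Proposition \ref{basis}(2) a two-term presilting complex $P$ is silting exactly when $|P|=n$, and $|T_{\mathcal{X}}|=|\mathcal{X}|$ since the summands are pairwise non-isomorphic; by Theorem \ref{g base}(2) a reduced $D$-lamination is complete precisely when it has $n$ elements. Chaining these equivalences shows that the presilting bijection restricts to a bijection between complete $D$-laminations and $\twosilt A(D)$. Finally, $C(\mathcal{X})$ and $C(T_{\mathcal{X}})$ are spanned by the same vectors because $g(\gamma)=g(T_{\gamma})$ by Proposition \ref{D-arc}, giving $C(\mathcal{X})=C(T_{\mathcal{X}})$; taking the union over all reduced $\mathcal{X}$ yields $\mathcal{F}(D)=\mathcal{F}(A(D))$ and hence $|\mathcal{F}(D)|=|\mathcal{F}(A(D))|$. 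The proof is essentially a synthesis, so the main obstacle is not a single hard step but ensuring the global presilting condition genuinely decouples into pairwise conditions: the key points are that two-term complexes kill all higher $\Hom$, so only the degree-one part matters, and that the self-$\Hom$ vanishing (indecomposable presilting) and cross-$\Hom$ vanishing (compatibility) are exactly the two inputs supplied by Proposition \ref{D-ips}, with all surface-topological content already encoded in the earlier propositions.
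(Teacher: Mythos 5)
Your proof is correct and takes essentially the same route as the paper: the paper's own proof is the single line ``It follows from Proposition \ref{D-ips}'', and your argument is exactly the implicit content of that reduction --- Krull--Schmidt decomposition (Proposition \ref{Kb KS}) together with Proposition \ref{D-ips}(1),(2) for the presilting bijection, the counting via Proposition \ref{basis}(2) and Theorem \ref{g base}(2) for the silting case, and $g(\gamma)=g(T_\gamma)$ from Proposition \ref{D-arc} for the cone equality. Nothing to correct.
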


\begin{proof}
It follows from Proposition \ref{D-ips}. 
\end{proof}

\begin{corollary} \label{FD is non-singular}
The set $\mathcal{F}(D)$ is a non-singular fan whose maximal faces correspond to complete $D$-laminations. 
\end{corollary}

\begin{proof}
It is immediate from Corollary \ref{prop fan} and Theorem \ref{D-twosilt}. 
\end{proof}

Now, we prove Theorem \ref{comp SB gtame}.

\begin{proof}[Proof of Theorem \ref{comp SB gtame}]
By Proposition \ref{loc. gentle}, any complete gentle algebra is given as $A(D)$ for some $\bullet$-dissection $D$ of a $\circ\bullet$-marked surface $(S,M)$, and it follows from Theorems \ref{dense} and \ref{D-twosilt} that complete gentle algebras are $g$-tame. 
Thus, the assertion follows from Corollary \ref{factor-g-tame} since every complete special biserial algebras is a factor algebra of some complete gentle algebra. 
\end{proof}

\subsection{Application to finite dimensional $k$-algebras} \label{appfindim}

Using Proposition \ref{central nilpotent}, we can slightly generalize Theorem \ref{D-twosilt} as follows. 

\begin{corollary}\label{thm:SB-BGA}
Let $(S,M)$ be a $\circ\bullet$-marked surface and $D$ a $\bullet$-dissection of $(S,M)$. 
Let $A(D)$ be a complete gentle algebra associated with $D$, which is module finite over $R=k[[t]]$. 
Let $I_{\rm sc}$ be a two-sided ideal in $A(D)$ generated by all special cycles in $A(D)$. Then, we have $\mathfrak{m}A(D)\subset I_{\rm sc}$. For an arbitrary two-sided ideal $0\subseteq J \subseteq I_{\rm sc}$, canonical surjections
\begin{equation*} 
   A(D) \twoheadrightarrow A(D)/J \twoheadrightarrow A(D)/I_{\rm sc}
\end{equation*}
induce 
\begin{equation} \label{moduloJ} 
   \mathcal{F}(A(D))= \mathcal{F}(A(D)/J) = \mathcal{F}(A(D)/I_{\rm sc}). 
\end{equation}
Furthermore, they coincide with $\mathcal{F}(D)$ of $D$ as non-singular fans.
\end{corollary} 

\begin{proof}
By Proposition \ref{modfin}, $A(D)$ is a module-finite algebra over $R:=k[[t]]$, where $t$ is a sum of all special cycles. 
By Proposition \ref{central nilpotent}, the canonical surjection $A(D)\twoheadrightarrow B:= A(D)/\mathfrak{m}A(D)$ gives $\mathcal{F}(A(D))=\mathcal{F}(B)$. 
On the other hand, $B$ is a finite dimensional $k$-algebra and the image of $I_{\rm sc}$ in $B$ is in the Jacobson radical and generated by central elements of $B$. Applying \cite[Theorem 1]{EJR} to $B$, we have $\mathcal{F}(B)= \mathcal{F}(A(D)/I_{\rm sc})$. 
Thus, we have $\mathcal{F}(A(D))=\mathcal{F}(A(D)/I_{\rm sc})$. 
Furthermore, by using Proposition \ref{factor-twosilt}, we have the equation \eqref{moduloJ} for any two-sided ideal $J$ in $A(D)$ contained in $I_{\rm sc}$. The latter assertion follows from Theorem \ref{D-twosilt}.
\end{proof}

We end this section with an application to finite dimensional $k$-algebras. 
We may apply Corollary \ref{thm:SB-BGA} to study two-term silting theory for two important classes of complete special biserial algebras, finite dimensional gentle algebras and Brauer graph algebras as follows.

\begin{example}\label{example BGA}
   Let $D$ be a $\bullet$-dissection of a $\circ\bullet$-marked surface $(S,M)$. Let $A(D)$ be a complete gentle algebra associated with $D$ and $I_{\rm sc}$ a two-sided ideal in $A(D)$ generated by all special cycles in $A(D)$, see Corollary \ref{thm:SB-BGA}. 
   \begin{enumerate}
      \item[(a)] If all $\circ$-marked points lie on the boundary $\partial S$, then $I_{\rm sc}=0$ and $A(D)$ is precisely a finite dimensional gentle algebra.  
      \item[(b)] Assume that all marked points are punctures (i.e., $\partial S = \emptyset$).  
      Consider a Brauer graph algebra defined from the following Brauer graph (We refer to \cite{Schroll18} for the definitions of Brauer graphs and Brauer graph algebras): 
      \begin{itemize}
      \item The set of vertices corresponds bijectively with $M_{\circ}$;
      \item The set of edges corresponds bijectively with the dual dissection $D^{\ast}$ of $D$;
      \item The cyclic ordering around each vertex is induced from the orientation of $S$;
      \item A multiplicity of a vertex $v$ is an arbitrary positive integer $m_v>0$. 
   \end{itemize}
   By definition, this is a factor algebra of the complete gentle algebra $A(D)$ modulo the two-sided ideal contained in $I_{\rm sc}$. Conversely, it is shown in \cite{Labourie13} that every Brauer graph algebra arises in this way. 
   \end{enumerate}
   Applying Corollary \ref{thm:SB-BGA}, we find that two-term silting complexes over finite dimensional gentle algebras and Brauer graph algebras are given by the surface model explained in (a) and (b) respectively.
\end{example}

%%%
%%%
%%%
\section{Examples for representation theory}\label{Sec example rep}

(1) Let $(S,M)$ be a disk with $\vert M\vert =10$ such that one marked point in $M_{\circ}$ (resp., $M_{\bullet}$) is a puncture and the others lie on $\partial S$.
For a $\bullet$-dissection of $(S,M)$
\[
D=
\begin{tikzpicture}[baseline=0mm]
\coordinate(r)at(0:1); \coordinate(ru)at(45:1); \coordinate(u)at(90:1); \coordinate(lu)at(135:1);
\coordinate(l)at(180:1); \coordinate(rd)at(-45:1); \coordinate(d)at(-90:1); \coordinate(ld)at(-135:1);
\coordinate(rc)at(0:0.25); \coordinate(lc)at(180:0.25);
\draw(0,0)circle(1);
\draw(u)--(l)--(d)--(rc)--(u)--(r);
\fill(l)circle(1mm); \fill(r)circle(1mm); \fill(u)circle(1mm); \fill(d)circle(1mm); \fill(rc)circle(1mm);
\draw[fill=white](lu)circle(1mm); \draw[fill=white](ru)circle(1mm); 
\draw[fill=white](ld)circle(1mm); \draw[fill=white](rd)circle(1mm); \draw[fill=white](lc)circle(1mm);
\end{tikzpicture}\ ,
\]
the quiver $Q(D)$ and the ideal $I(D)$ are given by
\[
Q(D)=
\begin{tikzpicture}[baseline=0mm]
\coordinate(r)at(0:2); \coordinate(ru)at(45:2); \coordinate(u)at(90:2); \coordinate(lu)at(135:2);
\coordinate(l)at(180:2); \coordinate(rd)at(-45:2); \coordinate(d)at(-90:2); \coordinate(ld)at(-135:2);
\coordinate(rc)at(0:0.5); \coordinate(lc)at(180:0.5);
\draw(0,0)circle(2);
\draw[dotted](u)--node(1)[fill=white,inner sep=2]{$1$}(l)--node(2)[fill=white,inner sep=2]{$2$}(d)
--node(3)[fill=white,inner sep=2]{$3$}(rc)--node(4)[fill=white,inner sep=2]{$4$}(u)
--node(5)[fill=white,inner sep=2]{$5$}(r);
\draw[->](1)--node[left]{$a$}(2); \draw[->](2)--node[below]{$b$}(3); \draw[->](3)--node[left]{$c$}(4);
\draw[->](4)--node[above]{$d$}(1); \draw[->](5)--node[above]{$e$}(4);
\draw[->](4)..controls(1,0.3)and(1,-0.3)..node[right]{$f$}(3);
\fill(l)circle(1mm); \fill(r)circle(1mm); \fill(u)circle(1mm); \fill(d)circle(1mm); \fill(rc)circle(1mm);
\draw[fill=white](lu)circle(1mm); \draw[fill=white](ru)circle(1mm); 
\draw[fill=white](ld)circle(1mm); \draw[fill=white](rd)circle(1mm); \draw[fill=white](lc)circle(1mm);
\end{tikzpicture}\ 
\text{ and }\ I(D)=\langle cf,fc,ed\rangle.
\]

(i) We consider an NCG $D$-laminate $\gamma$, but not a $D$-laminate, that is decomposed into $D$-segments $\gamma_0,\ldots,\gamma_5$ as follows:
\[
\begin{tikzpicture}[baseline=0mm]
\coordinate(r)at(0:2); \coordinate(ru)at(45:2); \coordinate(u)at(90:2); \coordinate(lu)at(135:2);
\coordinate(l)at(180:2); \coordinate(rd)at(-45:2); \coordinate(d)at(-90:2); \coordinate(ld)at(-135:2);
\coordinate(rc)at(0:0.5); \coordinate(lc)at(180:0.5);
\node[blue]at(-45:1.4){$\gamma$};
\draw(0,0)circle(2);
\draw(u)--(l)--(d)--(rc)--(u)--(r);
\draw[blue](-150:2)..controls(-90:1.5)and(1,-1)..(0:1); \draw[blue](180:1)..controls(-1,1)and(1,1)..(0:1);
\draw[blue](180:1)..controls(-1,-1)and(-45:2)..(30:2);
\fill(l)circle(1mm); \fill(r)circle(1mm); \fill(u)circle(1mm); \fill(d)circle(1mm); \fill(rc)circle(1mm);
\draw[fill=white](lu)circle(1mm); \draw[fill=white](ru)circle(1mm); 
\draw[fill=white](ld)circle(1mm); \draw[fill=white](rd)circle(1mm); \draw[fill=white](lc)circle(1mm);
\end{tikzpicture}
   \hspace{7mm}
\begin{tikzpicture}[baseline=0mm]
\coordinate(r)at(0:2); \coordinate(ru)at(45:2); \coordinate(u)at(90:2); \coordinate(lu)at(135:2);
\coordinate(l)at(180:2); \coordinate(rd)at(-45:2); \coordinate(d)at(-90:2); \coordinate(ld)at(-135:2);
\coordinate(rc)at(0:0.5); \coordinate(lc)at(180:0.5);
\draw(0,0)circle(2);
\draw[dotted](u)--(l)--(d)--(rc)--(u)--(r);
\draw[blue](-150:2)..controls(-90:1.5)and(1,-1)..(0:1); \draw[blue](180:1)..controls(-1,1)and(1,1)..(0:1);
\draw[blue](180:1)..controls(-1,-1)and(-45:2)..(30:2);
\fill(l)circle(1mm); \fill(r)circle(1mm); \fill(u)circle(1mm); \fill(d)circle(1mm); \fill(rc)circle(1mm);
\draw[fill=white](lu)circle(1mm); \draw[fill=white](ru)circle(1mm); 
\draw[fill=white](ld)circle(1mm); \draw[fill=white](rd)circle(1mm); \draw[fill=white](lc)circle(1mm);
\node[blue]at($(l)!0.59!(d)$){$\ast$}; \node[blue]at($(rc)!0.54!(d)$){$\ast$}; \node[blue]at($(rc)!0.36!(u)$){$\ast$};
 \node[blue]at($(rc)!0.38!(d)$){$\ast$}; \node[blue]at($(u)!0.8!(r)$){$\ast$};
\node[blue]at(-145:1.5){$\gamma_0$};\node[blue]at(-100:1.4){$\gamma_1$};\node[blue]at(50:1.1){$\gamma_2$};
\node[blue]at(110:1){$\gamma_3$};\node[blue]at(-20:1.5){$\gamma_4$};\node[blue]at(30:1.7){$\gamma_5$};
\end{tikzpicture}
\]

%{\color{blue} Then for the corresponding two-term string complex $T_{\gamma}$} 
\noindent Then the corresponding two-term string complex $T_{\gamma}$ is not presilting. In fact, there is a nonzero quasi-graph map representative in $\Hom_{\Kb(\proj A(D))}(T_{\gamma},T_{\gamma}[1])$ induced by the form
\[
\begin{tikzpicture}[baseline=0mm,scale=1]
\node(l1)at(-1,0){$P_2$}; \node(l2)at(-3,-0.7){$P_3$}; \node(l3)at(-1,-1.4){$P_4$};
\node(l4)at(-3,-2.1){$P_3$}; \node(l5)at(-1,-2.8){$P_5$};
\draw[->](l2)--node[fill=white,inner sep=2]{$\sigma(\gamma_1)$}(l1); 
\draw[->](l2)--node[fill=white,inner sep=2]{$\sigma(\gamma_2)$}(l3);
\draw[->](l4)--node[fill=white,inner sep=2]{$\sigma(\gamma_3)$}(l3); 
\draw[->](l4)--node[fill=white,inner sep=2]{$\sigma(\gamma_4)$}(l5);
\node(r1)at(3,-1.4){$P_2$}; \node(r2)at(1,-2.1){$P_3$}; \node(r3)at(3,-2.8){$P_4$};
\node(r4)at(1,-3.5){$P_3$}; \node(r5)at(3,-4.2){$P_5$};
\draw[->](r2)--node[fill=white,inner sep=2]{$\sigma(\gamma_1)$}(r1); 
\draw[->](r2)--node[fill=white,inner sep=2]{$\sigma(\gamma_2)$}(r3);
\draw[->](r4)--node[fill=white,inner sep=2]{$\sigma(\gamma_3)$}(r3); 
\draw[->](r4)--node[fill=white,inner sep=2]{$\sigma(\gamma_4)$}(r5);
\draw[blue,double](l4)--(r2);
\node at(-4,-1.4){$T_{\gamma}$}; \node at(4,-2.8){$T_{\gamma}$,};
\end{tikzpicture}
\]
where $\sigma(\gamma_1)$ (resp., $\sigma(\gamma_2)$, $\sigma(\gamma_3)$, $\sigma(\gamma_4)$) is the short map in $\proj A(D)$ induced by a short path $b$ (resp., $f$, $dab$, $ef$) in $Q(D)$. 
There is no short map $p$ from $P_4$ to $P_2$ (resp., from $P_5$ to $P_4$) such that $\sigma(\gamma_1)=p\sigma(\gamma_3)$ (resp., $\sigma(\gamma_2)=p\sigma(\gamma_4)$). 
Therefore, $T_{\gamma}$ is not presilting.

(ii) We consider two $D$-laminates $\delta$ and $\delta'$ as follows:
%{\color{blue}, and the corresponding two-term string complexes $T_{\delta}$ and $T_{\delta'}$ given by}
\[
\begin{tikzpicture}[baseline=0mm]
\coordinate(r)at(0:2); \coordinate(ru)at(45:2); \coordinate(u)at(90:2); \coordinate(lu)at(135:2);
\coordinate(l)at(180:2); \coordinate(rd)at(-45:2); \coordinate(d)at(-90:2); \coordinate(ld)at(-135:2);
\coordinate(rc)at(0:0.5); \coordinate(lc)at(180:0.5);
\node[blue]at(-45:1.3){$\delta$}; \node[blue]at(45:1.1){$\delta'$};
\draw(0,0)circle(2);
\draw(u)--(l)--(d)--(rc)--(u)--(r);
\draw[blue](120:2)..controls(-150:2)and(-60:2)..(20:2);
\draw[blue](-150:2)--(30:2);
\fill(l)circle(1mm); \fill(r)circle(1mm); \fill(u)circle(1mm); \fill(d)circle(1mm); \fill(rc)circle(1mm);
\draw[fill=white](lu)circle(1mm); \draw[fill=white](ru)circle(1mm); 
\draw[fill=white](ld)circle(1mm); \draw[fill=white](rd)circle(1mm); \draw[fill=white](lc)circle(1mm);
\end{tikzpicture}
\]
%{\color{blue}where $\sigma(\delta_1)$ (resp., $\sigma(\delta_2)$, $\sigma(\delta'_1)$, $\sigma(\delta'_2)$) is the short map in $\proj A(D)$ induced by a short path $ab$ (resp., $ef$, $bc$, $e$) in $Q(D)$. Note that $\delta'$ is in positive position for $\delta$, and $\delta$ is not in positive position for $\delta'$. It is easy to check that each of $T_{\delta}$ and $T_{\delta'}$ is presilting and $\Hom_{\Kb(\proj A(D))}(T_{\delta'},T_{\delta}[1])=0$. However, there is a nonzero singleton single map from $T_{\delta}$ to $T_{\delta'}[1]$ induced by the form (d) in Definition \ref{def singleton single map} with short map $p$ from $P_3$ to $P_2$. Here, $p$ is induced a short path $b$.Therefore, $T_{\delta}\oplus T_{\delta'}$ is not presilting.}
Then $\delta'$ is a positive position for $\delta$, but $\delta$ is not a positive position for $\delta'$. We observe whether $T_{\delta}\oplus T_{\delta'}$ is presilting. It is easy to see that $T_{\delta}$ and $T_{\delta'}$ are presilting respectively. In addition, we have $\Hom_{\Kb(\proj A)}(T_{\delta'}, T_{\delta}[1])=0$. However, there is a nonzero singleton single map from $T_{\delta}$ to $T_{\delta'}[1]$ induced by a short path $b$, as (d) in Definition \ref{def singleton single map}. Thus $T_{\delta} \oplus T_{\delta'}$ is not presilting.

(2)  We consider the $\circ\bullet$-marked surface $(S,M)$ and the $\bullet$-dissection $D=\{1,2\}$ in Section \ref{Secexample}(3). Then the associated quiver $Q(D)$ and the ideal $I(D)$ are given by
\[
Q(D)=
\begin{tikzpicture}[baseline=0mm,scale=1]
\coordinate(b)at(-0.5,0.5); \coordinate(w)at(0.5,-0.5);
\draw(-1.5,-1.5) rectangle (1.5,1.5);
\draw[fill=black](b)circle(1.0mm);
\draw[fill=white](w)circle(1.0mm);
\draw[dotted](-1.5,0.5)--node(l1)[fill=white,inner sep=1]{$1$}(b)--node(r1)[fill=white,inner sep=1]{$1$}(1.5,0.5); 
\draw[dotted](-0.5,-1.5)--node(d2)[fill=white,inner sep=1]{$2$}(b)--node(u2)[fill=white,inner sep=1]{$2$}(-0.5,1.5); 
\node(ll) at(-1.5,0) {\rotatebox{90}{$>>$}};
\node(rr) at(1.5,0) {\rotatebox{90}{$>>$}};
\node(uu) at(0,1.5) {\rotatebox{0}{$>$}};
\node(dd) at(0,-1.5) {\rotatebox{0}{$>$}};
\draw[->](r1)--node[below right=-1mm]{$a_1$}(d2); \draw[->](d2)--node[below left=-1mm]{$b_1$}(l1); 
\draw[->](l1)--node[above left=0mm]{$a_2$}(u2); \draw[->](u2)--node[above right=-1mm]{$b_2$}(r1);
\end{tikzpicture}\ 
\text{ and }\ I(D)=\langle a_1b_1,b_1a_2,a_2b_2,b_2a_1\rangle.
\]

%For an ideal
%{\color{blue} Moreover, we have an ideal }
%\[
%J=\langle a_1b_2a_2b_1-a_2b_1a_1b_2, b_1a_1b_2a_2-b_2a_2b_1a_1\rangle, 
%\]
%the algebra $A(D)/J$ is a Brauer graph algebra whose Brauer graph is 
%{\color{blue} Then $A(D,1):=kQ(D)/(I(D)\cup J(D;1))$} 
%\[
%\begin{tikzpicture}[baseline=0mm,scale=1]
%\coordinate(0)at(0,0); 
%\draw(0)to[out=70,in=90,relative](1.5,0.5); \draw(0)to[out=-70,in=-90,relative](1.5,0.5);
%\draw[-, line width=1.5mm,draw=white](0)to[out=70,in=90,relative](1.5,-0.5);
%\draw(0)to[out=70,in=90,relative](1.5,-0.5); \draw(0)to[out=-70,in=-90,relative](1.5,-0.5);
%\draw[fill=white](0)circle(1.0mm);
%\end{tikzpicture}
%\]
%with multiplicity $1$ on the vertex $\circ$ (see Example \ref{example BGA}). 
In Section \ref{Secexample}(3), we gave the complete lists of $D$-laminates and complete $D$-laminations. For $i\in\mathbb{Z}_{>0}$ and the non-closed $D$-laminate $\gamma_i$ as in Section \ref{Secexample}(3), the corresponding two-term string complex $T_{\gamma_i}$ is given by
\[
\begin{tikzpicture}[baseline=0mm,scale=1]
\node(l1)at(-1,0){$P_1$}; \node(l2)at(-3,-0.7){$P_2$}; \node(l3)at(-1,-1.4){$P_1$};
\node(l4)at(-3,-2.1){$P_2$}; \node(l5)at(-1,-2.8){$P_1$,};
\draw[->](l2)--node[fill=white,inner sep=2]{$\sigma(a_1)$}(l1); 
\draw[->](l2)--node[fill=white,inner sep=2]{$\sigma(a_2)$}(l3);
\draw[->](l4)--node[fill=white,inner sep=2]{$\sigma(a_2)$}(l5);
\node at(-2,-1.7){$\vdots$};
\end{tikzpicture}
\]
where $\sigma(a_k)$ is a short map induced by $a_k$ and $P_1$ only appears $i$ times in degree $0$ (resp., $P_2$ only appears $i-1$ times in degree $-1$). For $i, j \in \mathbb{Z}_{>0}$, it is easy to show that all nonzero maps between $T_{\gamma_i}$ and $T_{\gamma_j}[1]$ are quasi-graph map representatives induced by the form
\[
\begin{tikzpicture}[baseline=0mm,scale=1]
\node(l0)at(-3,1.9){$P_2$}; \node(l1)at(-1,1.2){$P_1$}; \node(l2)at(-3,0.5){$P_2$};
\node(l4)at(-3,-0.5){$P_2$}; \node(l5)at(-1,-1.2){$P_1$}; \node(l6)at(-3,-1.9){$P_2$};
\draw[->](l0)--node[fill=white,inner sep=2]{$\sigma(a_2)$}(l1); 
\draw[->](l2)--node[fill=white,inner sep=2]{$\sigma(a_1)$}(l1); 
\draw[->](l4)--node[fill=white,inner sep=2]{$\sigma(a_2)$}(l5);
\draw[->](l6)--node[fill=white,inner sep=2]{$\sigma(a_1)$}(l5);
\node(r1)at(3,1.2){$P_1$}; \node(r2)at(1,0.5){$P_2$};
\node(r4)at(1,-0.5){$P_2$}; \node(r5)at(3,-1.2){$P_1$};
\draw[->](r2)--node[fill=white,inner sep=2]{$\sigma(a_1)$}(r1); 
\draw[->](r4)--node[fill=white,inner sep=2]{$\sigma(a_2)$}(r5);
\draw[blue,double](l1)--(r1)(l2)--(r2)(l4)--(r4)(l5)--(r5);
\node at(-4,0){$T_{\gamma_i}$}; \node at(4,0){$T_{\gamma_j}$,};
\node at(-2,0.1){$\vdots$}; \node at(2,0.1){$\vdots$};
\node at(-2,2.3){$\vdots$}; \node at(-2,-2.1){$\vdots$};
\end{tikzpicture}
\]
and such a map exists if and only if $i>j+1$. Therefore, $T_{\gamma_i}\oplus T_{\gamma_j}$ is two-term silting for $j=i, i\pm 1$; otherwise it is not.

%%%%%%%%%%%%%%%%%%%%%%%%%%%%%%%%%%%%%%%%%%%%%%%%%%%%%%%%%%%%%%%%%%

\medskip\noindent{\bf Acknowledgements}.
The authors would like to thank their supervisor Osamu Iyama for his guidance and helpful comments. The authors would also like to thank Aaron Chan and Pierre-Guy Plamondon for valuable discussion. They are Research Fellows of Society for the Promotion of Science (JSPS).
T. Aoki is supported by JSPS KAKENHI Grant Number JP19J11408.
T. Yurikusa is supported by JSPS KAKENHI Grant Number JP17J04270.

\medskip\noindent{\bf Data availability}.
Data sharing not applicable to this article as no datasets were generated or analysed during the current study.

\bibliographystyle{alpha} 
\bibliography{CompleteSBalgebrasaregtame}

\end{document}